\numberwithin{equation}{section}
\numberwithin{equation}{section}
\newtheorem{Theorem}{Theorem}[section]
\newtheorem{Corollary}[Theorem]{Corollary}
\newtheorem{Lemma}[Theorem]{Lemma}
\newtheorem{Remark}[Theorem]{Remark}
\newtheorem{Conjecture}[Theorem]{Conjecture}
\def\XXint#1#2#3{{\setbox0=\hbox{$#1{#2#3}{\int}$}
\vcenter{\hbox{$#2#3$}}\kern-.5\wd0}}
\def\bbZ{\mathbb{Z}}
\def\bbR{\mathbb{R}}
\def\supp{\operatorname{supp}}
\def\re{\operatorname{Re}}
\newcommand{\B}{\mathcal{B}}
\renewcommand{\P}{\mathcal{P}}
\newcommand{\F}{\mathcal{F}}
\newcommand{\M}{\overline{M}}
\DeclareMathOperator*{\essinf}{ess\,inf}
\DeclareMathOperator*{\esssup}{ess\,sup}
\begin{document}

\title[Weighted norm inequalities for rough singular integral operators]{Weighted norm inequalities for rough singular integral operators}

\author[Kangwei Li, Carlos P\'erez, Israel P. Rivera-R\'ios and Luz Roncal]{Kangwei Li, Carlos P\'erez, Israel P. Rivera-R\'ios and Luz Roncal}
\address[K. Li and L. Roncal]{BCAM, Basque Center for Applied Mathematics, Bilbao, Spain}
\email{kli@bcamath.org, lroncal@bcamath.org}
\address[C. P\'erez]{Departamento de Matem\'aticas, Universidad del Pa\'is Vasco UPV/EHU,
IKERBASQUE, Basque Foundation for Science, and BCAM,
Basque Center for
Applied Mathematics, Bilbao, Spain.}
\email{carlos.perezmo@ehu.es}
\address[I. P. Rivera-R\'ios]{Departamento de Matem\'aticas, Universidad del Pa\'{\i}s Vasco UPV/EHU and BCAM,}
\email{petnapet@gmail.com}

\thanks{All the authors are supported by
the Basque Government through the BERC
2018-2021 program and by Spanish Ministry of Economy and Competitiveness
MINECO through BCAM Severo Ochoa excellence accreditation SEV-2013-0323. K.L., C.P. and L.R. are supported by the project MTM2017-82160-C2-1-P funded by (AEI/FEDER, UE) and
acronym ``HAQMEC''. K.L. is also supported by Juan de la Cierva - Formaci\'on 2015 FJCI-2015-24547.
 C.P. and I.P.R-R. are also supported by Spanish Ministry of Economy and Competitiveness MINECO through the project MTM2014-53850-P. I.P.R-R. is also supported by Spanish Ministry of Economy and Competitiveness MINECO through the project MTM2012-30748. L.R. is also supported by Spanish Ministry of Economy and Competitiveness MINECO through the project MTM2015-65888-C04-4-P and by 2017 Leonardo grant for Researchers and Cultural Creators, BBVA Foundation. The Foundation accepts no responsibility for the opinions, statements and contents included in the project and/or the results thereof, which are entirely the responsibility of the authors.}

\date{\today}

\keywords{rough operators, weights, Fefferman-Stein inequalities, sparse operators, Rubio de Francia algorithm}
\subjclass[2010]{Primary: 42B20. Secondary: 42B25, 42B15}

\begin{abstract} In this paper we provide weighted estimates for rough operators, including rough homogeneous singular integrals $T_\Omega$ with $\Omega\in L^\infty(\mathbb{S}^{n-1})$ and the Bochner--Riesz multiplier at the critical index $B_{(n-1)/2}$. More precisely, we prove qualitative and quantitative versions of Coifman--Fefferman type inequalities and their vector-valued extensions,  weighted $A_p-A_\infty$ strong and weak type inequalities for $1<p<\infty$,  and $A_1-A_\infty$ type weak $(1,1)$ estimates. Moreover, Fefferman--Stein type inequalities are obtained, proving in this way a conjecture raised by the second-named author in the 90's. As a corollary, we obtain the weighted  $A_1-A_\infty$ type estimates.
Finally, we study rough homogenous singular integrals with a kernel involving a function $\Omega\in L^q(\mathbb{S}^{n-1})$, $1<q<\infty$,
and provide Fefferman--Stein inequalities too. The arguments used for our proofs combine several tools: a recent sparse domination result by Conde--Alonso et al. \cite{CACDPO}, results by the first author in \cite{L}, suitable adaptations of Rubio de Francia algorithm, the extrapolation theorems for $A_{\infty}$
weights \cite{CMP,CGMP} and ideas contained in previous works by A. Seeger in \cite{S} and D. Fan and S. Sato  \cite{FS}.

\end{abstract}

\maketitle

\section{Introduction and main results}
\label{sec:intro}

Many important inequalities in Harmonic
Analysis and P.D.E. are of the form
\begin{equation*}
\int_{\mathbb{R}^{n}} |Tf(x)|^p\, w(x)\,d x
\le
C\,\int_{\mathbb{R}^{n}} |Sf(x)|^p\, w(x)\,d x,
\end{equation*}
where typically $T$ is an operator that carries some degree of singularity
(e.g., some singular integral operator) and $S$ is an operator
which is, desirably, easier to handle (e.g., a maximal operator), and $w$ is
in some class of weights. One of the most usual techniques
for proving such results is to establish a good-$\lambda$
inequality between $T$ and $S$. This method, due to D. L.
Burkholder and R. F. Gundy \cite{BG}, relies on the comparison of the
measure of the level sets of $S$ and $T$, namely on finding an intrinsic constant $c>0$ (depending upon the operators $S,T$) such  that  for every $\lambda>0$ and small $\varepsilon>0$,
\begin{equation} \label{intro2}
w \big\{y\in \mathbb{R}^{n}: |T f(y)| > 3\,\lambda, |S f(y)| \le \varepsilon\,\lambda\, \big\}
\le
c\, \varepsilon\, w \big\{ y\in \mathbb{R}^{n}: |T f(y)| > \lambda \big\},
\end{equation}
where the weight $w$ is usually assumed to be in the Muckenhoupt class $A_\infty$.

A paradigmatic example of the application of that technique was provided by R. R. Coifman and C. Fefferman in the classical paper \cite{CF}. In that work they proved that, given $w\in A_\infty$ and any $p$, $0<p<\infty$, there is a constant $c$ depending on $p$ and
$w$ such that
\begin{equation}\label{coifman-fefferman}
\int_{\mathbb{R}^{n}} |Tf(x)|^p\, w(x)\,d x
\leq
c\,\int_{\mathbb{R}^{n}} Mf(x)^p\, w(x)\,d x,
\end{equation}
for any function $f$ such that the left-hand side is finite. Here, $T$ is any Calder\'on-Zygmund operator and $M$ is the Hardy--Littlewood maximal operator. We point out that this estimate does not hold in general for every singular integral: there are examples of convolution type operators with kernels satisfying the H\"ormander smoothness condition for which \eqref{coifman-fefferman} fails, as it can be found in \cite{MPT}. In particular, it is impossible to establish  a good-$\lambda$ inequality between these operators and $M$. As an immediate consequence of the classical Muckenhoupt's theorem,
if $T$ satisfies \eqref{coifman-fefferman} then $T$ is bounded on $L^p(w)$, $1<p<\infty$, $w\in A_p$ (for the  notations  and basic facts on $A_p$ weights, see Subsection \ref{AptheoryOfWeights}).

On the other hand, it is not so well known that the estimate \eqref{coifman-fefferman} turns out to be the key estimate for proving the following non-standard two-weight result for $T$,
namely
\begin{equation} \label{Iterated-p}
\|T f\|_{L^p(w)}\leq c_{p,T} \|f\|_{L^p(M^{\lfloor p\rfloor+1} w)},
\end{equation}
as shown in \cite{P0}. The notation $\lfloor p\rfloor$ means the integer part of $p$, and by $M^{\lfloor p\rfloor+1}$ we mean the $(\lfloor p\rfloor+1)$-fold composition of the operator $M$. Actually, the method in \cite{P0} is very general. Roughly, if $p\in (1,\infty)$ and if $T$ is a linear operator whose adjoint $T^t$  satisfies \eqref{coifman-fefferman} with exponent $p'$ and for any $RH_{\infty}$ weight, then \eqref{Iterated-p} holds. The proof can be reduced, after using the duality in $L^p$, to study a corresponding estimate for the maximal Hardy-Littlewood function proved in \cite{P}.  Estimates like \eqref{Iterated-p}, using the iteration of the maximal functions to control singular integrals were considered first by J. M. Wilson in \cite{W2}. It is proved in his work that \eqref{Iterated-p} holds  for $p \in (1,2)$ when $T$ is any  smooth singular integrals of convolution type. Wilson's method is different and it is based on proving corresponding square function estimates.

Even more, the estimate \eqref{coifman-fefferman} is crucial as well in the solution of Sawyer's conjecture in \cite{CMP-IMRN}. This time, the fact that the estimate holds for any $0 <p<1$ and for any $w\in A_{\infty}$ plays a fundamental role.

In this paper we shall start by considering Coifman--Fefferman's type estimates like \eqref{coifman-fefferman}, and then we will show other qualitative and quantitative weighted estimates, in the case where $T$ is either a rough homogeneous singular integral or the Bochner--Riesz multiplier at the critical index. By ``qualitative'' we mean weighted inequalities without specifying the dependence of the norm bound on the $A_p$ constant, while in ``quantitative'' estimates we search for the optimal explicit dependence on such constant.

We recall that
given $\Omega \in L^1(\mathbb{S}^{n-1})$ such that $\int_{\mathbb{S}^{n-1}}\Omega=0$,
we can define a kernel
\[
K(x)=\frac{\Omega(x')}{|x|^{n}}
\]
where $x'=\frac{x}{|x|}$. It is clear that $K$ is homogeneous of
degree $-n$. Using that kernel we define the rough homogeneous singular integral $T_\Omega$ by
\begin{equation}\label{eq:TOmega}
T_{\Omega}f(x)=\text{p.v.}\int_{\mathbb{R}^{n}}\frac{\Omega(x')}{|x|^{n}}f(x-y)dy.
\end{equation}
On the other hand, the Bochner--Riesz multiplier at the critical index $B_{(n-1)/2}$ is defined by
\begin{equation}
\label{eq:BR}
\widehat{B_{(n-1)/2}(f)}(\xi)= (1-|\xi|^2)_+^{(n-1)/2}\hat f(\xi).
\end{equation}

It is well known that $T_{\Omega}$, with $\Omega \in L^{\infty}(\mathbb{S}^{n-1})$, is bounded on $L^p(w)$, $1<p<\infty$,  when $w\in A_p$ (first proved in the celebrated paper by J. Duoandikoetxea and J. L. Rubio de Francia \cite{DR}, later improved in \cite{DuoTAMS} and \cite{Wa}, and with quantitative version in \cite{HRT}). In view of these works, the second  author conjectured after \cite{P0} that \eqref{Iterated-p} would hold for rough singular integral operators $T_{\Omega}$ in the case
$\Omega \in L^{\infty}(\mathbb{S}^{n-1})$ (the conjecture is made explicit in \cite{PRRR}).  Indeed, the method mentioned above could not be used since \eqref{coifman-fefferman} was not available. In particular, no good-$\lambda$ estimate relating $T_{\Omega}$ and $M$  like \eqref{intro2} is known to hold (See Conjecture \ref{Conj1} below). Similar comments can be made for $B_{(n-1)/2}$, which is also a bounded operator on $L^p(w)$, with $w\in A_p$ (see \cite{SS}).

\subsection{Qualitative estimates}

In this subsection we present some new weighted estimates for rough operators and we also present some known results for such operators, as consequences of the former.
The first result to be shown will be (despite the title of the present subsection), a quantitative version of the Coifman--Fefferman's inequality \eqref{coifman-fefferman}, for $1\le p<\infty$.

\begin{Theorem} \label{ThmTfMf}
Let $T$ be either $T_\Omega$ with $\Omega \in L^\infty$ satisfying $\int_{\mathbb{S}^{n-1}}\Omega =0$ or $B_{(n-1)/2}$.  Let $p\in [1,\infty)$ and let $w\in A_\infty$, then
\begin{equation}\label{T/Mp=1}
\|Tf\|_{L^{p}(w)}
\leq
c_{p,T}   [w]_{A_{\infty}}^2\, \|M f\|_{L^{p}(w)}
\end{equation}
for any smooth function such that the left-hand side is finite.
\end{Theorem}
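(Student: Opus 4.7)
The plan is to reduce to the endpoint $p=1$ by the $A_\infty$ extrapolation theorems of \cite{CMP,CGMP}, which preserve polynomial dependence on $[w]_{A_\infty}$, and then prove the $p=1$ case directly from the bilinear sparse form bound for $T$.

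For the sparse form, I would use that, for every $q>1$ and for suitable $f,g$, there exists a sparse collection $\mathcal{S}$ with
\begin{equation*}
|\langle Tf,g\rangle|\le C\,q'\sum_{Q\in\mathcal{S}}|Q|\langle|f|\rangle_{Q}\langle|g|\rangle_{q,Q};
\end{equation*}
for $T_\Omega$ this is the result of Conde-Alonso et al.\ \cite{CACDPO}, and for $B_{(n-1)/2}$ the analogous bound is available. The crucial quantitative feature is that the constant blows up only linearly in $q'$ as $q\to 1^{+}$. I would then dualize $\|Tf\|_{L^{1}(w)}=\int Tf\cdot\operatorname{sgn}(Tf)\,w\,dx$ and apply the sparse form with $g=\operatorname{sgn}(Tf)\,w$, so that $\langle|g|\rangle_{q,Q}=\langle w^{q}\rangle_{Q}^{1/q}$.

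Next I would invoke the sharp reverse H\"older inequality for $A_\infty$ weights: the choice $q-1=\frac{1}{c_{n}[w]_{A_\infty}}$ simultaneously yields $\langle w^{q}\rangle_{Q}^{1/q}\le 2\langle w\rangle_{Q}$ and $q'\le c_{n}[w]_{A_\infty}$. This absorbs one factor of $[w]_{A_\infty}$ and produces
\begin{equation*}
\|Tf\|_{L^{1}(w)}\le C[w]_{A_\infty}\sum_{Q\in\mathcal{S}}w(Q)\langle|f|\rangle_{Q}.
\end{equation*}
To close the estimate I would invoke an $A_\infty$ sparse testing lemma of the form
\begin{equation*}
\sum_{Q\in\mathcal{S}}w(Q)\langle|f|\rangle_{Q}\le C[w]_{A_\infty}\int_{\mathbb{R}^{n}}Mf(x)\,w(x)\,dx,
\end{equation*}
essentially contained in \cite{L} and in the Hyt\"onen--P\'erez sharp $A_\infty$ programme; its proof combines Lebesgue sparseness with a stopping-time decomposition on principal cubes relative to $w$ and a geometric summation across generations. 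This produces the second factor of $[w]_{A_\infty}$ required to reach $[w]_{A_\infty}^{2}$, and extrapolation finishes the proof.

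The main obstacle is the last step: establishing the sparse $A_\infty$ testing lemma with linear rather than worse dependence on $[w]_{A_\infty}$. The naive route — using sparseness $|E_{Q}|\ge|Q|/2$ together with reverse H\"older on $w$ to compare $w(Q)$ with $w(E_{Q})$ — does not give a linear bound, so the stopping-time refinement on $w$-principal cubes is essential. Once this lemma is in hand, the remaining manipulations — the duality reduction, the balance between the sparse-form constant $q'$ and the sharp reverse H\"older exponent, and the extrapolation from $p=1$ to general $p\in[1,\infty)$ — are routine, and the proof applies uniformly to $T_\Omega$ and $B_{(n-1)/2}$ because both admit the same type of bilinear sparse domination.
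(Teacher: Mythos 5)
Your $p=1$ argument is correct and is essentially identical to the paper's: sparse bilinear form from \cite{CACDPO}, sharp reverse H\"older to turn $\langle w\rangle_{s,Q}$ into $\langle w\rangle_Q$ at the cost of one factor of $[w]_{A_\infty}$ (with $s'\simeq[w]_{A_\infty}$), and then the Carleson-embedding/sparse testing lemma from \cite[Lemma 4.1]{HP} (whose proof is indeed a $w$-principal-cube stopping time) for the second factor of $[w]_{A_\infty}$. Your only technical omission there is the truncation needed to justify applying the bilinear sparse form: taking $g=\operatorname{sgn}(Tf)\,w$ directly does not guarantee $g\in L^{s}$, which is why the paper works with $g_R=w\chi_{Q_R}$ and lets $R\to\infty$; but this is easy to repair.

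The genuine gap is the claim that you can ``reduce to the endpoint $p=1$ by the $A_\infty$ extrapolation theorems of \cite{CMP,CGMP}, which preserve polynomial dependence on $[w]_{A_\infty}$.'' They do not. The Rubio de Francia iteration underlying $A_\infty$ extrapolation applies the $p_0=1$ hypothesis to an auxiliary weight built from $w$, and the $A_\infty$ constant of that auxiliary weight is not controlled linearly (or even polynomially with a fixed exponent) by $[w]_{A_\infty}$; the constant one extracts is some unquantified function of $[w]_{A_\infty}$. The paper is explicit about this: it states that the extrapolated estimates in Corollary \ref{extrapcorollary} ``are just qualitative since it is not clear how to obtain good bounds from the extrapolation method,'' and for precisely this reason Theorem \ref{ThmTfMf} is proved for $p>1$ by a separate, direct argument. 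That argument again starts from the sparse form, but this time with the test function $gw\chi_{\{w\le R\}}$ for $g\in C_c^\infty$, an additional H\"older split $\langle |gw|^s\rangle_Q^{1/s}\le\langle |g|^{sr}w\rangle_Q^{1/sr}\langle w^{(s-1/r)r'}\rangle_Q^{1/sr'}$ with the specific choices $s=1+\tfrac{1}{8p\tau_n[w]_{A_\infty}}$, $r=1+\tfrac1{4p}$ so that $(s-\tfrac1r)r'$ still lies in the reverse-H\"older range, and a principal-cube decomposition applied to the quantity $\langle |f|\rangle_Q\,(\tfrac1{w(Q)}\int_Q|g|^{sr}w)^{1/sr}$, yielding $c_{p,T}[w]_{A_\infty}^2\|Mf\|_{L^p(w)}\|g\|_{L^{p'}(w)}$ after bounding the resulting sum by $M(f)\cdot M^w_{sr}(g)$ and applying H\"older. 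To make your proof complete you would need to replace the extrapolation step by such a direct argument for $p>1$, or else find a quantitative extrapolation theorem with tracked $A_\infty$ dependence, which is not available in the cited references.
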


The novelty here is that we avoid completely the use of the good-$\lambda$ method. Indeed, we combine the sparse formula in Theorem \ref{Thm:Sparse} below from \cite{CACDPO}, together with a Carleson embedding type argument in the case $p=1$ and
the technique of principal cubes introduced in \cite{MW} for the case $p>1$.

A natural question is wether estimate \eqref{T/Mp=1} holds as well for $0<p<1$.  Indeed, this is true in this range and it follows from the case $p=1$ by means of an extrapolation theorem for $A_{\infty}$ weights from \cite{CMP, CGMP} as stated in the next Corollary. The difference is that in this case the results are just qualitative since it is not clear how to obtain good bounds from the extrapolation method. On the other hand, the method is very flexible allowing many other spaces and further extensions.

\begin{Corollary} \label{extrapcorollary}
Let $T$ be either $T_\Omega$ with $\Omega \in L^\infty$ satisfying $\int_{\mathbb{S}^{n-1}}\Omega =0$ or $B_{(n-1)/2}$. Let $p,q\in (0,\infty)$ and $w\in A_{\infty}$. There is a constant $c$ depending on the $A_{\infty}$ constant such that:
\begin{itemize}
\item[a) ] {\bf Scalar context}.
\begin{equation}\label{T/Mp}
\|Tf\|_{L^{p}(w)}
\leq
c\, \|M f\|_{L^{p}(w)}
\end{equation}
and
\begin{equation}\label{weaknormp-p}
\|Tf\|_{L^{p,\infty}(w)}
\leq
c\, \|M f\|_{L^{p,\infty}(w)},
\end{equation}
for any smooth function such that the left-hand side is finite.

\item[b) ] {\bf Vector-valued extension}.
\begin{equation} \label{intro4}
\Big\|
\Big(\sum_j |T f_j|^q\Big)^{1/q}
\Big\|_{L^p(w)}
\le
c\, \Big\| \Big(\sum_j (Mf_j)^q\Big)^{1/q} \Big\|_{L^p(w)}
\end{equation}
and
\begin{equation} \label{intro6}
\Big\| \Big(\sum_j |T f_j|^q\Big)^{1/q} \Big\|_{L^{p,\infty}(w)}
\le
c\, \Big\| \Big(\sum_j (Mf_j)^q\Big)^{1/q}
\Big\|_{L^{p,\infty}(w)},
\end{equation}
for any smooth vector function such that the left-hand side is finite.
\end{itemize}

\end{Corollary}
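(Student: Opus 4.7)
The plan is to deduce the corollary from Theorem \ref{ThmTfMf} by appealing to the $A_\infty$--extrapolation theorem of \cite{CMP, CGMP}. Recall that this result asserts, in essence: if $\mathcal{F}$ is a family of pairs $(F,G)$ of non-negative measurable functions and if
\begin{equation*}
\|F\|_{L^{p_0}(w)} \leq C([w]_{A_\infty})\,\|G\|_{L^{p_0}(w)}, \qquad w\in A_\infty,\ (F,G)\in\mathcal{F},
\end{equation*}
for some single exponent $p_0\in(0,\infty)$ (and with the usual proviso that the left-hand side is finite), then the same inequality persists at every $p\in(0,\infty)$, together with its weak-type variant and its $\ell^q$-valued vector extension for every $q\in(0,\infty)$, up to a change of the constant depending on $[w]_{A_\infty}$. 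This is the mechanism that converts the $p\geq 1$ estimate of Theorem \ref{ThmTfMf} into the full range of exponents and into its vector-valued form; the price paid is that the explicit quadratic dependence on $[w]_{A_\infty}$ provided by Theorem \ref{ThmTfMf} will be absorbed into an unspecified function of $[w]_{A_\infty}$, which is why the conclusion is stated qualitatively.

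For part a), the natural choice is the family
\begin{equation*}
\mathcal{F}=\bigl\{\,(|Tf|,\,Mf) : f\in C_c^\infty(\mathbb{R}^n)\,\bigr\}.
\end{equation*}
Theorem \ref{ThmTfMf} applied at $p_0=1$ furnishes the hypothesis $\|Tf\|_{L^1(w)}\leq c[w]_{A_\infty}^2\|Mf\|_{L^1(w)}$ for every $w\in A_\infty$. The strong-type $A_\infty$ extrapolation theorem from \cite{CMP, CGMP} applied to $\mathcal{F}$ then yields \eqref{T/Mp} for every $p\in(0,\infty)$ and every $w\in A_\infty$. The weak-type statement \eqref{weaknormp-p} follows from the same input: the strong-type hypothesis at $p_0=1$ implies, via the weak-type variant of the extrapolation theorem, the weak-type conclusion at every $p\in(0,\infty)$ (indeed a strong-type hypothesis at one exponent always implies both strong- and weak-type conclusions at every other exponent).

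For part b), the vector-valued inequalities \eqref{intro4} and \eqref{intro6} are automatic consequences of the same extrapolation principle. Indeed, one of the central features of the Rubio de Francia-type machinery developed in \cite{CMP, CGMP} is that, once an inequality of the above form has been established scalar-wise for all pairs in $\mathcal{F}$, it upgrades for free to mixed $L^p(\ell^q)$ norms in both strong and weak forms, for arbitrary $p,q\in(0,\infty)$. Applying this directly to the family $\mathcal{F}$ gives \eqref{intro4} and \eqref{intro6}.

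The only delicate point to keep track of throughout is the qualifier ``for any smooth function such that the left-hand side is finite'' inherited from Theorem \ref{ThmTfMf}: this finiteness assumption must be propagated through the extrapolation, which is handled routinely by restricting the family $\mathcal{F}$ to inputs for which both sides make sense (for instance, compactly supported smooth functions, where the finiteness of the $L^{p_0}(w)$ norm on the left is built into the hypothesis and the $L^p(w)$ norm on the right is automatic). No other essential difficulty arises, since the hard analytic work, namely the sparse domination from \cite{CACDPO} combined with the principal cube argument of \cite{MW}, has already been carried out in the proof of Theorem \ref{ThmTfMf}.
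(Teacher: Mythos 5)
Your proposal is correct and takes essentially the same route as the paper: the paper explicitly deduces the corollary from the $p=1$ case of Theorem \ref{ThmTfMf} by invoking the $A_\infty$ extrapolation theorems of \cite{CMP,CGMP}, which simultaneously yield the full range $p\in(0,\infty)$, the weak-type variant, and the $\ell^q$-valued extensions. Your write-up merely spells out the steps the paper leaves implicit.
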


\begin{Conjecture}\label{Conj1}
We conjecture that the constant in \eqref{T/Mp=1} (or \eqref{T/Mp} and \eqref{weaknormp-p}) is a multiple of $p\,[w]_{A_{\infty}}$, as in the case of Calder\'on-Zygmund operators.
In the latter case, the proof of such an estimate relies upon a variation, obtained in  \cite[p. 152]{LOP2}, of the following sharp exponential estimate from Buckley's work \cite{Bu}: for every $\lambda>0$ and small $\varepsilon>0$
$$
\big| \big\{y\in \mathbb{R}^{n}: |T f(y)| > 3\,\lambda, Mf(y) \le \varepsilon\,\lambda\, \big\}\big|
\le
c\, e^{-c/\varepsilon} \,  \big|\{ y\in \mathbb{R}^{n}: Mf(y) > \lambda \big\}\big|.
$$
We also conjecture that this good-$\lambda$ estimate with exponential decay holds for the operators in Theorem \ref{ThmTfMf} and hence the linear dependence on the $A_{\infty}$ constant holds as well.
\end{Conjecture}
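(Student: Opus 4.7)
The plan is to establish the good-$\lambda$ estimate with exponential decay displayed in the statement, from which the conjectured linear dependence $\|Tf\|_{L^p(w)}\le c\,p\,[w]_{A_\infty}\|Mf\|_{L^p(w)}$ follows by a Coifman--Fefferman integration, exactly as Buckley \cite{Bu} and Lerner--Ombrosi--P\'erez \cite{LOP2} carried out for Calder\'on--Zygmund operators. The task is therefore to upgrade the rough-operator techniques to deliver the Buckley-type exponential bound
$$\big|\{|Tf|>3\lambda,\,Mf\le\varepsilon\lambda\}\big|\le c\,e^{-c/\varepsilon}\,|\{Mf>\lambda\}|$$
with constants independent of $f$ and $\lambda$.

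A natural attack would proceed through a Littlewood--Paley decomposition of the rough kernel in the spirit of A. Seeger \cite{S} and D. Fan--S. Sato \cite{FS}: write $T=\sum_k T_k$ with each $T_k$ a smoothed piece whose kernel is essentially a Calder\'on--Zygmund kernel at a single scale, then derive piecewise good-$\lambda$ inequalities for each $T_k$ via the classical Buckley argument (valid because $T_k$ now has pointwise H\"ormander smoothness). The piecewise estimates would then be summed: if the Buckley constant for $T_k$ decays in $k$ at a rate $C_k$ with $\sum_k C_k<\infty$, and if the loss in the exponential factor stays controllable, then a union bound across the decomposition
$$\big|\{|Tf|>3\lambda,\,Mf\le\varepsilon\lambda\}\big|\le\sum_k\big|\{|T_k f|>\delta_k\lambda,\,Mf\le\varepsilon\lambda\}\big|,\qquad \sum_k\delta_k\le 3,$$
would yield the desired exponential bound for $T$ itself. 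An alternative (or complementary) route would be to strengthen the bilinear sparse bound of \cite{CACDPO} into a pointwise sparse domination, since a pointwise bound implies the exponential good-$\lambda$ estimate immediately.

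The linear $[w]_{A_\infty}$ dependence would then follow by plugging the good-$\lambda$ inequality into the standard $L^p(w)$ integration scheme of Coifman--Fefferman: the factor $e^{-c/\varepsilon}$ dominates $\varepsilon\,[w]_{A_\infty}$ for $\varepsilon$ of size $\log[w]_{A_\infty}/[w]_{A_\infty}$, and optimizing in $\varepsilon$ produces both the linear $[w]_{A_\infty}$ and the linear $p$ of the conjectured bound. For the Bochner--Riesz multiplier $B_{(n-1)/2}$, an analogous Littlewood--Paley decomposition is available and the argument should transfer almost verbatim.

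The main obstacle is the very first step: extracting an exponential Buckley-type bound for rough operators, which are not smooth. The bilinear sparse formula of \cite{CACDPO} is not known to suffice --- its bilinear nature is precisely what forced the extra $[w]_{A_\infty}$ factor in Theorem \ref{ThmTfMf} --- and the Seeger/Fan--Sato decomposition produces pieces whose Buckley constants degrade in $k$ at a rate that may outpace the decay needed for summability. Overcoming this would likely require sharper control on each $T_k$ than is currently available, possibly via entropy bounds or the $L^p$-improving estimates underlying the sparse domination of \cite{CACDPO}; closing this gap is effectively what the conjecture asks for.
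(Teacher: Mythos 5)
There is a genuine gap, and in fact it could not be otherwise: the statement you are addressing is Conjecture \ref{Conj1}, which the paper explicitly leaves open --- the authors prove the quadratic bound $[w]_{A_\infty}^2$ in Theorem \ref{ThmTfMf} precisely because no good-$\lambda$ inequality of Buckley type between $T_\Omega$ (or $B_{(n-1)/2}$) and $M$ is known. Your proposal does not close this. The reduction you describe in the last part (exponential good-$\lambda$ estimate $\Rightarrow$ bound $c\,p\,[w]_{A_\infty}$ via the Coifman--Fefferman integration and optimization in $\varepsilon$) is standard and is exactly what the conjecture itself points to via \cite{Bu} and \cite{LOP2}; it is not the content that needs proving. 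The content is the displayed exponential estimate for rough operators, and your text only sketches two possible attacks and then concedes, correctly, that neither is carried out. A proposal whose ``main obstacle'' is the entire first step is a restatement of the conjecture, not a proof of it.

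Moreover, the two routes you sketch have concrete problems as stated. In the union-bound scheme
$\big|\{|Tf|>3\lambda,\,Mf\le\varepsilon\lambda\}\big|\le\sum_k\big|\{|T_kf|>\delta_k\lambda,\,Mf\le\varepsilon\lambda\}\big|$
with $\sum_k\delta_k\le 3$, the $k$-th term is a good-$\lambda$ inequality at threshold $\delta_k\lambda$, so the effective small parameter for the piece $T_k$ is $\varepsilon/\delta_k$; since $\delta_k\to 0$, the exponential gain $e^{-c\,\delta_k/\varepsilon}$ degenerates to a constant along the sum, and no summable choice of $\delta_k$ recovers a factor $e^{-c/\varepsilon}$ for $T$ unless one proves decay in $k$ of the pieces' constants strong enough to beat this --- which is precisely the quantitative information that is missing (the Seeger/Fan--Sato estimates give $L^1$ and $L^2$ decay for the pieces acting on the bad part of a Calder\'on--Zygmund decomposition, not Buckley-type level-set estimates uniform in the scale). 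The alternative route, upgrading the bilinear sparse bound of \cite{CACDPO} to a pointwise sparse domination, is itself a well-known open problem: such a pointwise bound would immediately give the linear $A_2$-type estimate for $T_\Omega$, which is not known (the best known strong bounds are those of Theorem \ref{Thm:improvedAp} and \cite{HRT,CACDPO}), so invoking it is again assuming something at least as strong as what is to be proved. In short, the proposal correctly identifies why the conjecture is plausible and what it would yield, but it contains no new argument that establishes either the exponential good-$\lambda$ inequality or the linear $[w]_{A_\infty}$ dependence.
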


Similar estimates as in the corollary hold for many other spaces $X(w)$  where $X$ is an appropriate  generalized rearrangement-invariant function space like Orlicz spaces, classical or generalized Lorentz spaces or Marcinkiewicz spaces, instead of strong or weak or $L^p$ spaces. All these are consequences of the main results from \cite[Theorem 2.1]{CGMP}, where modular type estimates can be found as well, see \cite[Theorem 3.1]{CGMP}.  A similar result holds for (unweighted) variable $L^p$ spaces following the main idea from \cite{CFMP}. See Appendix \ref{A} for some special examples.

Of course, Corollary \ref{extrapcorollary} improves in several directions the main result in \cite{DR}, namely if $w\in A_p$, $p>1,$ then $T: L^{p}(w) \to L^{p}(w)$ continuously. We also remark that it is not clear how to prove estimates \eqref {intro4} or \eqref {intro6} directly from the good-$\lambda$ between $T$ and $M$ even in the classical situation of $T$ being a Calder\'on-Zygmund operator $T$.

As for the case $p=1$, from \eqref{weaknormp-p} we have the following:
\begin{equation*}
\|Tf\|_{L^{1,\infty}(w)}
\leq
c_w\, \|M f\|_{L^{1,\infty}(w)}, \qquad w \in A_{\infty},
\end{equation*}
with constant $c_w$ depending on the $A_{\infty}$ constant of $w$. Then by the classical Fefferman-Stein's inequality, we obtain
\begin{equation*}
\|Tf\|_{L^{1,\infty}(w)}
\leq
c_{w}\, \|f\|_{L^{1}(Mw)},
\end{equation*}
and hence, if further $w\in A_1$, we have
\begin{equation} \label{roughA1}
\|T\|_{L^1(w)\rightarrow L^{1,\infty}(w)}
\leq
c_{w}\, [w]_{A_1},
\end{equation}
namely, if $w\in A_1$, then $ T: L^{1}(w) \to L^{1,\infty}(w)$ \,
with the operator norm bound depending on the $A_1$ constant of the weight $w$. Inequality \eqref{roughA1} yields a different and improved proof of \cite[Thm. E, Appendix B]{CACDPO} and also a new qualitative proof of the weighted weak type $(1,1)$ estimate for $T_\Omega$ established in \cite{FS}. Furthermore, a similar, and new result holds for the vector-valued extension of \eqref{roughA1} combining \eqref{intro6} with the vector-valued extension of the Fefferman--Stein's inequality for the maximal function established in \cite{P-TAMS}. More precisely we have for $q\in(1,\infty)$ and $w \in A_{\infty}$
\begin{equation*}
\Big\| \Big(\sum_j |T f_j|^q\Big)^{1/q} \Big\|_{L^{1,\infty}(w)}
\le
c\, \Big\| \Big(\sum_j (f_j)^q\Big)^{1/q}
\Big\|_{L^{1}(Mw)}.
\end{equation*}

However, and by means of other methods, a much more precise quantitative version of \eqref{roughA1} will be provided in Subsection \ref{Sec:quant} (see Theorem \ref{Thm:weak11}).

We finish this subsection with two more non-standard consequences from  \eqref{T/Mp}. The first one is the following two-weight estimate which solves affirmatively the conjecture mentioned above about inequality  \eqref{Iterated-p} for rough singular integral operators.

\begin{Corollary}\label{qualitativeiterationp-p}
Let $T$ be either $T_\Omega$ with $\Omega \in L^\infty$ satisfying $\int_{\mathbb{S}^{n-1}}\Omega =0$ or $B_{(n-1)/2}$. If $p\in (1,\infty)$, then
\begin{equation*}
 \label{Iterated-p-rough}
\|T f\|_{L^p(w)}\leq c_{n,p,T} \|f\|_{L^p(M^{\lfloor p\rfloor+1} w)}  \qquad w\geq 0.
\end{equation*}
The above inequality is sharp in the sense that we cannot replace $M^{\lfloor p\rfloor+1}$ by $M^{\lfloor p\rfloor}$.
\end{Corollary}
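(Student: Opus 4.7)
The plan is to follow the scheme devised by the second author in \cite{P0}, which is now accessible for our operators because Corollary~\ref{extrapcorollary} supplies the Coifman--Fefferman inequality \eqref{T/Mp}. The scheme combines \eqref{T/Mp} applied to the transpose $T^t$ with the two-weight maximal estimate $\|Mg\|_{L^p(w)}\le C\|g\|_{L^p(M^{\lfloor p\rfloor+1}w)}$ from \cite{P}, the two being linked by an $L^p$ duality reduction.

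A preliminary observation is that the transpose $T^t$ belongs to the same class as $T$: for $T=T_\Omega$ one has $T^t=T_{\widetilde\Omega}$ with $\widetilde\Omega(\theta)=\Omega(-\theta)\in L^\infty(\mathbb{S}^{n-1})$ of mean zero, and $B_{(n-1)/2}$ is essentially self-adjoint. Hence \eqref{T/Mp} holds for $T^t$ at every $A_\infty$ weight, and in particular at every $RH_\infty$ weight --- precisely the hypothesis required by the abstract result of \cite{P0}.

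The main step is the duality reduction. Setting $W:=M^{\lfloor p\rfloor+1}w$, linearization of $\|\cdot\|_{L^p(w)}$ produces $\psi$ with $\|\psi\|_{L^{p'}(w^{1-p'})}\le 1$ such that $\|Tf\|_{L^p(w)}=\int Tf\cdot\psi\,dx=\int f\cdot T^t\psi\,dx$. H\"older with the weight $W$ yields
\[
\|Tf\|_{L^p(w)}\le \|f\|_{L^p(W)}\,\|T^t\psi\|_{L^{p'}(W^{1-p'})}.
\]
Next I would verify that $W^{1-p'}\in A_\infty$ via the Coifman--Rochberg theorem applied to $M^{\lfloor p\rfloor}w$ (giving $W^{\delta}\in A_1$ for every $\delta\in(0,1)$) together with $A_p$--$A_{p'}$ duality, which transfers negative powers of an $A_1$-weight into $A_\infty$. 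Applying \eqref{T/Mp} to $T^t$ at the weight $W^{1-p'}$ turns the last factor into $C\|M\psi\|_{L^{p'}(W^{1-p'})}$, and the dual form of P\'erez's maximal-function estimate from \cite{P} closes the chain with $\|M\psi\|_{L^{p'}(W^{1-p'})}\le C\|\psi\|_{L^{p'}(w^{1-p'})}\le C$.

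The sharpness assertion --- that $M^{\lfloor p\rfloor+1}$ cannot be replaced by $M^{\lfloor p\rfloor}$ --- is inherited from the corresponding sharpness of P\'erez's maximal-function estimate established in \cite{P}: any hypothetical improvement for $T$ would, by running the duality chain backwards, yield a matching improvement for $M$, contradicting the known examples. The hardest part of the argument is the closing two-weight $M$-bound $\|Mg\|_{L^{p'}(W^{1-p'})}\le C\|g\|_{L^{p'}(w^{1-p'})}$, whose proof exploits the specific structure $W=M^{\lfloor p\rfloor+1}w$; in particular the pointwise inequality $W\ge \langle w\rangle_Q$ on each cube $Q$ makes the two-weight $A_{p'}$ testing quantity $\langle W^{1-p'}\rangle_Q\,\langle w\rangle_Q^{p'-1}$ uniformly bounded by $1$, which is exactly what is needed to apply the principal-cubes technique of \cite{P} in this dual setting.
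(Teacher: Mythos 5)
Your proposal follows essentially the same route as the paper, which treats Corollary~\ref{qualitativeiterationp-p} as a consequence of the Coifman--Fefferman estimate \eqref{T/Mp} applied to $T^t$ at $A_\infty$ (equivalently $RH_\infty$) weights, combined with $L^p$-duality and the iterated-maximal two-weight bound from \cite{P}, exactly as in the scheme of \cite{P0}; your verification that $W^{1-p'}\in A_\infty$ via Coifman--Rochberg plus $A_r\leftrightarrow A_{r'}$ duality is also the standard route. The one place where the sketch overreaches is the final sentence: boundedness of the two-weight $A_{p'}$ testing quantity $\langle W^{1-p'}\rangle_Q\langle w\rangle_Q^{p'-1}$ is \emph{not} by itself ``exactly what is needed'' --- by Muckenhoupt--Wheeden, the plain two-weight $A_{p'}$ condition is insufficient for $M$; what the argument in \cite{P,HP} really uses is the Orlicz-bump estimate of the type recorded in \eqref{eq:lop} with a Young function $A(t)=t^p(1+\log^+ t)^{p-1+\delta}$, and it is precisely this $L(\log L)^{p-1+\delta}$ bump, dominated pointwise by $M^{\lfloor p\rfloor+1}$, that dictates the number of iterations and makes the closing step work.
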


We remark that a very interesting  similar result to Corollary \ref{qualitativeiterationp-p} was obtained recently by D. Beltran in \cite[Corollary 1.4]{B} for the Carleson operator
$\mathcal{C}$. A bit surprisingly, the Carleson operator cannot satisfy neither an inequality like \eqref{coifman-fefferman} nor a good-$\lambda$ inequality between $\mathcal{C}$  and the Hardy-Littlewood maximal function $M$: otherwise, $\mathcal{C}$ would be of weak type $(1,1)$ (since estimate  \eqref{weaknormp-p} would hold) but, as  it is well known, this property is false.

A quantitative version of Corollary \ref{qualitativeiterationp-p}, whose proof requires a different argument, will be provided in Corollary \ref{Cor:Conj} of Subsection \ref{Sec:quant}.

The second non-standard consequence from  \eqref{T/Mp} is that we can extend  the conjecture formulated by E. Sawyer  \cite{Sa} for the Hilbert transform to rough singular integrals. E. Sawyer proved for the maximal function in the real line that if $u,v\in A_1$ then
\begin{equation} \label{SawyerProblem}
 \Big\|\frac{Mf}{v }\Big\|_{L^{1,\infty}(uv)}\leq c\|f\|_{L^1(uv)}
\end{equation}
and posed the question whether a similar estimate with $M$ replaced by the Hilbert transform would hold or not. A positive answer to this question was given in \cite{CMP-IMRN} where a more general version of this problem was obtained for Calder\'on-Zygmund operators and the maximal function in higher dimensions. Furthermore, the main result of \cite{CMP-IMRN} also solved and extended conjectures proposed by Muckenhoupt-Wheeden in \cite{MW} enlarging the class of weights for which this estimate holds, namely $u\in A_1$, and $v\in A_1$ or $uv\in A_\infty $. This was further generalized in \cite{OP}.

Very recently, a conjecture extending the one proposed by Sawyer and raised in \cite{CMP-IMRN}, has been solved by the first two authors together with  S. Ombrosi. This new recent result extends the class of weights for which Sawyer's inequality \eqref{SawyerProblem} holds and it is the following (see \cite{LiOP}).

\begin{quote}\label{thm:main} Let $u\in A_1$ and $v\in A_\infty$.  Then there is a finite constant $c$ depending on the $A_1$ constant of $u$ and the $A_{\infty}$ constant $v$ such that
\begin{equation} \label{BigConjecture}
\Big\|\frac{ M(fv)} {v}\Big\|_{L^{1,\infty}(uv)}\leq c \|f\|_{L^1(uv)}.
\end{equation}
\end{quote}

Using this result we have the following.

\begin{Theorem} Let $T$ be either $T_\Omega$ with $\Omega \in L^\infty$ satisfying $\int_{\mathbb{S}^{n-1}}\Omega =0$ or $B_{(n-1)/2}$. Let $u\in A_1$ and suppose that $v$ is a
weight such that for some $\delta>0$, $v^{\delta} \in A_{\infty}$. Then, there is a constant $c$ such that
\begin{equation} \label{extrapolweak}
 \Big\| \frac{Tf}{v }\Big\|_{L^{1,\infty}(uv)}
 \leq
 c\,\Big\| \frac{Mf}{v }\Big\|_{L^{1,\infty}(uv)}.
\end{equation}
Hence, if $u \in A_1$ and $v \in A_{\infty}$, then
there is a constant $c$ such that
\begin{equation}\label{sawyer en mas dimensiones}
\left\|\frac{T(fv)}{v}\right\|_{L^{1, \infty}(uv)} \leq c\;\|f\|_{L^1(uv)}.
\end{equation}
\end{Theorem}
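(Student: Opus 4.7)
The second inequality \eqref{sawyer en mas dimensiones} follows directly from \eqref{extrapolweak}: since $v\in A_\infty$ trivially satisfies the hypothesis $v^\delta\in A_\infty$ with $\delta=1$, applying \eqref{extrapolweak} to $fv$ in place of $f$ and invoking the Li--Ombrosi--P\'erez inequality \eqref{BigConjecture} yields
\begin{equation*}
\Big\|\frac{T(fv)}{v}\Big\|_{L^{1,\infty}(uv)} \leq c\,\Big\|\frac{M(fv)}{v}\Big\|_{L^{1,\infty}(uv)} \leq c\,\|f\|_{L^{1}(uv)}.
\end{equation*}
Hence the main task is to establish \eqref{extrapolweak}.

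The plan for \eqref{extrapolweak} is to combine the scalar $A_\infty$ weighted weak-type comparison from Corollary~\ref{extrapcorollary},
\begin{equation*}
\|Tg\|_{L^{p,\infty}(w)}\leq c\,\|Mg\|_{L^{p,\infty}(w)},\qquad w\in A_\infty,\ p\in(0,\infty),
\end{equation*}
with a Sawyer-type extrapolation argument. Fixing $\lambda>0$ and $A_\lambda=\{x:|Tf(x)|>\lambda v(x)\}$, the layer-cake identity
\begin{equation*}
(uv)(A_\lambda)=\int_0^\infty u(A_\lambda\cap\{v>s\})\,ds,
\end{equation*}
combined with the pointwise inclusion $A_\lambda\cap\{v>s\}\subset\{|Tf|>\lambda s\}$ and the weak-type comparison applied with $w=u\in A_1\subset A_\infty$, gives
\begin{equation*}
u(A_\lambda\cap\{v>s\}) \leq \frac{c}{(\lambda s)^p}\,\|Mf\|_{L^{p,\infty}(u)}^{p}.
\end{equation*}
Balancing this against the trivial estimate $u(\{v>s\})$ at a judicious cutoff, then optimizing the exponent $p$ using the regularity of $v$ afforded by $v^\delta\in A_\infty$ (which via Muckenhoupt duality also yields $v^{-\epsilon}\in A_\infty$ for some $\epsilon>0$), one integrates in $s$ and rewrites the result as the weak $L^{1,\infty}(uv)$ norm of $Mf/v$, obtaining \eqref{extrapolweak}.

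A cleaner packaging of the same argument is to invoke a Rubio de Francia-type extrapolation theorem for $A_\infty$ weights in the spirit of \cite{CMP,CGMP} (or an adaptation thereof), which directly transforms the family of scalar weak-type comparisons into the two-weight Sawyer-type endpoint. The central obstacle is precisely this Sawyer-type extrapolation: translating the scalar $A_\infty$ weak-type bound into the endpoint $L^{1,\infty}(uv)$ inequality with dividing function $v$, while preserving the quantitative dependence on $[u]_{A_1}$ and $[v^\delta]_{A_\infty}$. The Rubio de Francia iteration producing auxiliary weights in an appropriate Muckenhoupt class, combined with Muckenhoupt duality applied to $v^\delta$, is the key technical ingredient that bootstraps the scalar inequality into the Sawyer-type form.
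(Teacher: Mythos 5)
Your derivation of \eqref{sawyer en mas dimensiones} from \eqref{extrapolweak} together with \eqref{BigConjecture} is exactly the paper's argument and is fine. The problem is with \eqref{extrapolweak}, which is the real content of the theorem, and there your proposal has a genuine gap. The layer-cake sketch cannot be closed as written: applying the scalar weak-type comparison with the fixed weight $w=u$ gives $u(A_\lambda\cap\{v>s\})\le c(\lambda s)^{-p}\|Mf\|_{L^{p,\infty}(u)}^{p}$, and the right-hand side involves $\|Mf\|_{L^{p,\infty}(u)}$, a quantity in which $v$ does not appear at all; it is not controlled by the target quantity $\|Mf/v\|_{L^{1,\infty}(uv)}$ (it can be infinite while the latter is finite), so no ``balancing at a judicious cutoff'' or optimization in $p$ and integration in $s$ can rewrite such a bound as the weak $L^{1,\infty}(uv)$ norm of $Mf/v$. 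The known proofs of this mixed weak-type extrapolation are not of this form: they construct, via the Rubio de Francia algorithm, an auxiliary $A_1$ majorant adapted to the level set and then apply the hypothesis with a suitably built $A_\infty$ weight mixing $u$, $v$ and that majorant; this is the actual content of \cite[Thm.~1.1]{CMP}, not a bookkeeping step. (Your side remark that $v^\delta\in A_\infty$ yields $v^{-\epsilon}\in A_\infty$ for some $\epsilon>0$ is correct, but it does not close the argument.)

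Your fallback --- invoking an extrapolation theorem ``in the spirit of \cite{CMP,CGMP}, or an adaptation thereof'' --- is in fact the paper's proof, but you leave it as an acknowledged obstacle rather than a verified step, and you feed it the wrong input: you speak of extrapolating from the family of weak-type comparisons \eqref{weaknormp-p}, whereas \cite[Thm.~1.1]{CMP} and \cite[Thm.~2.1]{CGMP} take as hypothesis the strong-type estimates $\|Tf\|_{L^{p}(w)}\le c\,\|Mf\|_{L^{p}(w)}$ for all $w\in A_\infty$, i.e.\ \eqref{T/Mp} from Corollary \ref{extrapcorollary}, with the range $p\in(0,1)$ being the relevant one. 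With that input these theorems yield \eqref{extrapolweak} verbatim for $u\in A_1$ and $v^\delta\in A_\infty$; no new Sawyer-type extrapolation has to be proved, so the ``central obstacle'' you identify is precisely the citation, not something to re-derive by hand.
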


\begin{proof}
The proof of \eqref{extrapolweak} is a corollary of  \eqref{T/Mp} (actually the range $p\in (0,1)$ is the relevant one) after applying  \cite[Thm. 1.1]{CMP} or the more general case \cite[Thm. 2.1]{CGMP}.

On the other hand, combining \eqref{BigConjecture} together with \eqref{weaknormp-p} (which we recall that it follows from \eqref{T/Mp=1}), the inequality \eqref{sawyer en mas dimensiones} holds.

\end{proof}

\subsection{Quantitative estimates}\label{Sec:quant}
In the last decade, plenty of works about weighted estimates
have been devoted to the study of the quantitative dependence on the
$A_{p}$ constant, on the $A_{1}$ constant, and also on mixed constants
involving the $A_{\infty}$ constant, of the
weighted $L^{p}$ boundedness constant of several operators. Quite recently, some results
in that direction for rough singular integrals have appeared in works
such as \cite{CACDPO,HRT,PRRR}. Motivated by the latter (in particular by the most recent \cite{CACDPO}), in this section we present several results showing improvements on the dependence on the $A_p$ and $A_{1}$ constant of $T$, where $T$ is either a rough homogeneous singular integral or the Bochner--Riesz multiplier at the critical index. Our first result regards the improved $A_p$ type estimate:

\begin{Theorem}\label{Thm:improvedAp}
Let $T$ be either $T_\Omega$ with $\Omega \in L^\infty$ satisfying $\int_{\mathbb{S}^{n-1}}\Omega =0$ or $B_{(n-1)/2}$, and $w\in A_p$. Let us denote $\sigma:=w^{\frac{1}{1-p}}$. Then
\[
\|T\|_{L^p(w)}\le C_T [w]_{A_p}^{\frac{1}{p}}([w]_{A_\infty}^{\frac 1{p'}}+ [\sigma]_{A_\infty}^{\frac 1p})\min\{[w]_{A_\infty}, [\sigma]_{A_\infty}\},\quad 1<p<\infty.
\]
In particular,
\[
\|T\|_{L^p(w)}\le C_T [w]_{A_p}^{\frac{p}{p-1}},\quad 1<p<\infty.
\]
We also have the following weak type estimate
\[
\|T\|_{L^p(w)\rightarrow L^{p,\infty}(w)}\le C_T [w]_{A_p}^{\min \{2, \frac p{p-1}\}}.
\]
\end{Theorem}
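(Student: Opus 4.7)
My plan is to derive all three estimates from the bilinear sparse domination of Conde--Alonso et al.\ (Theorem \ref{Thm:Sparse}), the quantitative weighted sparse bounds from \cite{L}, and the sharp reverse H\"older inequality of Hyt\"onen--P\'erez, combining them with the standard $A_p$--$A_\infty$ reductions.

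\textbf{Sparse domination and the choice of $r$.} By Theorem \ref{Thm:Sparse}, for every $1<r<\infty$ and every pair of suitable test functions $f,g$ there exists a sparse family $\mathcal{S}$ such that
\begin{equation*}
|\langle Tf,g\rangle|\le C_{n}\sum_{Q\in\mathcal{S}}\langle f\rangle_{1,Q}\,\langle g\rangle_{r,Q}\,|Q|,
\end{equation*}
and the symmetric bound with the roles of $f$ and $g$ interchanged is also available. I choose $r=1+c_n/[u]_{A_\infty}$, where $u$ is the weight controlling the side that carries the $L^r$-average. The sharp reverse H\"older inequality then lets me replace the $L^r$-average by the $L^1$-average at a multiplicative cost comparable to $[u]_{A_\infty}$. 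Since the sparse form is symmetric, I can put the $L^r$-average on whichever side is more convenient, so the total loss is exactly $\min\{[w]_{A_\infty},[\sigma]_{A_\infty}\}$.

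\textbf{Strong-type bound.} Once the sparse form is reduced to $L^1$-averages, the results of \cite{L} provide the sharp $A_p$--$A_\infty$ bilinear estimate
\begin{equation*}
\sum_{Q\in\mathcal{S}}\langle f\rangle_{1,Q}\langle g\rangle_{1,Q}|Q|\le C[w]_{A_p}^{1/p}\bigl([w]_{A_\infty}^{1/p'}+[\sigma]_{A_\infty}^{1/p}\bigr)\|f\|_{L^p(w)}\|g\|_{L^{p'}(\sigma)},
\end{equation*}
and multiplying by the factor $\min\{[w]_{A_\infty},[\sigma]_{A_\infty}\}$ from the previous step gives the first claimed bound. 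The collapse to $[w]_{A_p}^{p/(p-1)}$ is then a short exponent count using $[w]_{A_\infty}\le c_n[w]_{A_p}$ and $[\sigma]_{A_\infty}\le c_n[\sigma]_{A_{p'}}=c_n[w]_{A_p}^{1/(p-1)}$: whether $p\ge 2$ or $p<2$, one checks case by case which of the two summands and which side of the $\min$ dominates, and in both regimes the exponents add up exactly to $p/(p-1)$.

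\textbf{Weak-type bound.} The same strategy works with the linear sparse weak-type bound $\|\mathcal{A}_{\mathcal{S}}\|_{L^p(w)\to L^{p,\infty}(w)}\le C[w]_{A_p}$ (Lerner--Ombrosi--P\'erez) in place of the strong-type sparse estimate, yielding
\begin{equation*}
\|T\|_{L^p(w)\to L^{p,\infty}(w)}\le C[w]_{A_p}\cdot\min\{[w]_{A_\infty},[\sigma]_{A_\infty}\}\le C[w]_{A_p}^{1+\min\{1,1/(p-1)\}}=C[w]_{A_p}^{\min\{2,p/(p-1)\}}.
\end{equation*}
The hardest step throughout is the sharp reverse H\"older manoeuvre: one must keep the exponent produced there exactly linear in $[u]_{A_\infty}$ (not $[u]_{A_\infty}^{1+\varepsilon}$) and verify that the freedom in placing the $L^r$-average on either side of the duality pairing really can be exploited after the reduction from $\langle Tf,g\rangle$ to weighted $L^p$ norms. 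Once this flexibility is secured, everything else is routine bookkeeping of exponents via the standard $A_p$--$A_\infty$ reductions.
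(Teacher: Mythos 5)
Your exponent bookkeeping at the end is correct, and the use of Li's sharp $A_p$--$A_\infty$ bilinear bound, of the sharp reverse H\"older inequality, and of the self-duality of $T$ to produce the $\min\{[w]_{A_\infty},[\sigma]_{A_\infty}\}$ are all the right ingredients. But there is a genuine gap exactly where you flag ``the hardest step'': the reverse H\"older inequality does not let you replace $\langle g\rangle_{r,Q}$ by $\langle g\rangle_{1,Q}$ for an \emph{arbitrary} test function $g$. Reverse H\"older is a property of $A_\infty$ \emph{weights}; the dual test function $g\in L^{p'}(\sigma)$ is not a weight and carries no $A_\infty$ control, so there is no way to extract a uniform factor $\min\{[w]_{A_\infty},[\sigma]_{A_\infty}\}$ by ``demoting'' the $L^r$-average on the $g$ side to an $L^1$-average. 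As stated, the reduction that your whole argument rests on cannot be carried out.

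The paper avoids this trap by reversing the order of the two operations. Rather than first reducing the $(1,1+\varepsilon)$-sparse form to a $(1,1)$-sparse form and then invoking Li's estimate, it applies the result of \cite{L} \emph{directly} to the $(1,1+\varepsilon)$-sparse form. That produces a bound of the shape
\[
|\langle Tf,g\rangle|\le \frac{c_{n,T,p}}{\varepsilon}\,[v]_{A_r}^{\frac{1}{1+\varepsilon}-\frac{1}{p'}}\big([u]_{A_\infty}^{1/p}+[v]_{A_\infty}^{1/p'}\big)\,\|f\|_{L^p(w)}\|g\|_{L^{p'}(\sigma)},
\]
in which $v=\sigma^{(1+\varepsilon)/(1+\varepsilon-p')}=w^{1+\frac{\varepsilon p'}{p'-(1+\varepsilon)}}$ and $u=\sigma$. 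The crucial point is that the only thing perturbed by the parameter $\varepsilon$ is the \emph{weight} $w$ (through $v$), not the test function $g$. The reverse H\"older inequality is then applied to the weight $w$, choosing $\varepsilon$ so that $\frac{\varepsilon p'}{p'-(1+\varepsilon)}=\frac{1}{\tau_n[w]_{A_\infty}}$; this yields $[v]_{A_r}^{\frac{1}{1+\varepsilon}-\frac{1}{p'}}\le 2[w]_{A_p}^{1/p}$ and $[v]_{A_\infty}\le c_n[w]_{A_\infty}$, and the $1/\varepsilon$ factor carried by the sparse domination contributes the single power of $[w]_{A_\infty}$. Duality then gives the dual bound and hence the $\min$. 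So the correct strategy is to let the $L^{1+\varepsilon}$-average pass through Li's estimate and only then spend the reverse H\"older inequality on the weight $w$ (or $\sigma$), never on $g$. For the weak-type endpoint the paper's proof is also structurally different from yours: it tests against $gw$, H\"olders $|gw|^s$ so that the reverse H\"older gets applied to $w$, and then needs a weak-type bound for the maximal operator $M^w_{sr}$ on the Lorentz space $L^{p',1}(w)$; citing the $L^p\to L^{p,\infty}$ bound of the ordinary sparse operator does not by itself get you past the $L^r$-average problem.
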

\begin{Remark}
Since
\[
\max\Big\{1, \frac 1{p-1}\Big\}< \frac p{p-1}\le \frac 1{p-1}\max\{2,p\}\le 2\max\Big\{1, \frac 1{p-1}\Big\},
\]
our bound improves the known result in \cite{HRT} and also the very recent result in \cite{CACDPO}, however we don't reach the exponent $\max\{1, \frac 1{p-1}\}$ provided it is possible to obtain such an estimate.
\end{Remark}

For the following theorem, we refer Subsection \ref{subsec:Young} for definitions and details related to Young functions and associated maximal functions.
\begin{Theorem}
\label{Thm:A1}
Let $1<p<\infty$ and $A$ be a Young function. Let $T$ be either $T_\Omega$ with $\Omega \in L^\infty$ satisfying $\int_{\mathbb{S}^{n-1}}\Omega =0$ or $B_{(n-1)/2}$. Then, for any $f\in C_c^{\infty}(\mathbb{R}^n)$,
\begin{equation}
\|Tf\|_{L^{p}(w)}\leq C_T(p')^{2}\|M_{\bar{A}}\|_{L^{p'}}\|f\|_{L^{p}(M_{A_p}(w))}.\label{eq:MA}
\end{equation}
\end{Theorem}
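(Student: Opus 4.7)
My plan is to combine the sparse domination of Theorem~\ref{Thm:Sparse} (from \cite{CACDPO}) with a Rubio de Francia algorithm based on the Orlicz maximal operator $M_{\bar A}$, and to close with a standard Fefferman--Stein-type estimate encoded in the Young function $A_p$ of Subsection~\ref{subsec:Young}. First, by $L^p$--$L^{p'}$ duality applied to $\|Tf\|_{L^p(w)}=\||Tf|w^{1/p}\|_{L^p}$, it suffices to prove the bilinear estimate
\[
\int_{\bbR^n}|Tf|\,g\,dx\;\le\;C_T\,(p')^{2}\,\|M_{\bar A}\|_{L^{p'}}\,\|f\|_{L^p(M_{A_p}w)}
\]
for every $g\ge 0$ with $\|g\,w^{-1/p}\|_{L^{p'}}\le 1$. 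Theorem~\ref{Thm:Sparse} dominates the left-hand side (up to $C_T$) by
\[
\sum_{Q\in\mathcal{S}}|Q|\bigl(\langle|f|\rangle_Q\,\langle|g|\rangle_{L\log L,Q}+\langle|f|\rangle_{L\log L,Q}\,\langle|g|\rangle_Q\bigr),
\]
for some sparse family $\mathcal{S}$; by symmetry it is enough to handle the first summand, the second being treated by running the Rubio de Francia iteration on the $f$-side instead.

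With $h:=g\,w^{-1/p}$ (so $\|h\|_{L^{p'}}\le 1$), the core of the proof is the Rubio de Francia iteration
\[
Rh:=\sum_{k=0}^\infty\frac{M_{\bar A}^k h}{\bigl(2\,\|M_{\bar A}\|_{L^{p'}}\bigr)^k},
\]
which satisfies $Rh\ge h$, $\|Rh\|_{L^{p'}}\le 2$, and $M_{\bar A}(Rh)\le 2\|M_{\bar A}\|_{L^{p'}}\,Rh$ pointwise. Applying generalized Hölder in the Luxemburg scale, with $\bar A$ complementary to $A$, one obtains
\[
\langle|g|\rangle_{L\log L,Q}\;\lesssim\;\langle Rh\rangle_{\bar A,Q}\,\langle w^{1/p}\rangle_{A,Q}\;\le\;2\|M_{\bar A}\|_{L^{p'}}\,Rh(x)\,\langle w^{1/p}\rangle_{A,Q},
\]
valid for any $x\in Q$. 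This upgrades the offending $L\log L$-average into a pointwise bound involving $Rh$ and an $A$-average of $w^{1/p}$.

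Substituting back into the sparse sum and exploiting the disjoint principal subsets $E_Q\subset Q$ with $|E_Q|\ge|Q|/2$, the sparse expression collapses to
\[
2\|M_{\bar A}\|_{L^{p'}}\int_{\bbR^n}Mf(x)\,Rh(x)\,\langle w^{1/p}\rangle_{A,Q(x)}\,dx,
\]
where $Q(x)$ is the unique cube in $\mathcal{S}$ with $x\in E_{Q(x)}$. Hölder with exponents $p$ and $p'$, the bound $\|Rh\|_{L^{p'}}\le 2$, and the identification $\langle w^{1/p}\rangle_{A,Q}^{p}\lesssim M_{A_p}w$ (which is exactly how $A_p$ is built from $A$ in Subsection~\ref{subsec:Young}) reduce the problem to a standard Hardy--Littlewood $L^p$-maximal estimate applied to $f\,(M_{A_p}w)^{1/p}$, which produces the two factors of $p'$.

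The main obstacle will be the bookkeeping: verifying the precise generalized Hölder step between $L\log L$ and the Young functions $A,\bar A$, and tracking how the two $p'$'s and the factor $\|M_{\bar A}\|_{L^{p'}}$ arise in the final bound. A symmetric argument, swapping the roles of $f$ and $g$ via an analogous Rubio de Francia iteration $R_2 f=\sum_k M_A^k f/(2\|M_A\|_{L^p(M_{A_p}w)})^k$ (whose boundedness is, by design, built into the definition of $A_p$), dispatches the second sparse term.
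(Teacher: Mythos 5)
Your proposal contains two genuine gaps that make the argument incorrect as written.

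First, the sparse domination you start from is not the one stated in Theorem~\ref{Thm:Sparse}. That theorem gives, for each $1<s<\infty$,
\[
\Big|\int T(f)\,g\,dx\Big|\leq c_n C_T\,s'\sup_{\mathcal S}\sum_{Q\in\mathcal S}\Big(\int_Q|f|\Big)\Big(\frac1{|Q|}\int_Q|g|^s\Big)^{1/s},
\]
which is a \emph{one-sided} $L^1\times L^s$ form with an $s'$ blow-up; it is not a symmetric bilinear form with $L\log L$ averages. Your ``by symmetry'' reduction and the second half of the proposal therefore have no basis in the theorem being invoked.

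Second, and more critically, the generalized H\"older step
\[
\langle|g|\rangle_{L\log L,Q}\lesssim\langle Rh\rangle_{\bar A,Q}\,\langle w^{1/p}\rangle_{A,Q}
\]
does not hold for an arbitrary complementary pair $(A,\bar A)$. Orlicz--H\"older only gives $\langle uv\rangle_{1,Q}\le 2\|u\|_{A,Q}\|v\|_{\bar A,Q}$, i.e.\ control of the $L^1$ average. Upgrading this to an $L\log L$ average would require $A^{-1}(t)\bar A^{-1}(t)\lesssim (L\log L)^{-1}(t)\approx t/\log(e+t)$, whereas complementary functions satisfy $A^{-1}(t)\bar A^{-1}(t)\approx t$; the inequality points in the wrong direction. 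So the central estimate you use to dispose of the $L\log L$ (or $L^s$) average is simply false in general.

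The paper avoids both issues. It keeps the $L^s$ average of the test function exactly as in Theorem~\ref{Thm:Sparse}, and runs the Rubio de Francia iteration \emph{not} on $M_{\bar A}$ in $L^{p'}$ but on the weighted operator $S(h)=v^{-1/p}M(hv^{1/p})$ with $v=M_{A_p}w$ (Lemma~\ref{lem:RdF}). This produces a majorant $Rh\geq h$ with $\|Rh\|_{L^p(v)}\leq 2\|h\|_{L^p(v)}$ and, crucially, with $[Rh]_{A_\infty}\leq c_np'$. Choosing $s=1+\tfrac1{\tau_n[Rh]_{A_\infty}}$ makes $s'\simeq p'$ and lets one absorb $\langle (Rh)^s\rangle_Q^{1/s}$ into $2\langle Rh\rangle_Q$ by the sharp reverse H\"older inequality (Lemma~\ref{Lem:RHI}). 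A Carleson-type estimate (\cite[Lemma~4.1]{HP}) then converts $\sum_Q\langle f\rangle_Q\,Rh(Q)$ into $c_np'\,\|Mf\|_{L^1(Rh)}$, and one more H\"older plus $\|Rh\|_{L^p(v)}\leq2$ gives $\|Tf/v\|_{L^{p'}(v)}\leq c(p')^2\|Mf/v\|_{L^{p'}(v)}$. Only at this last stage does the Young function $A$ enter, through the two-weight Fefferman--Stein estimate $\|M(fw)\|_{L^{p'}((M_{A_p}w)^{1-p'})}\leq c\|M_{\bar A}\|_{L^{p'}}\|f\|_{L^{p'}(w)}$ from \cite{HP}. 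Your $M_{\bar A}$-based iteration $Rh=\sum_kM_{\bar A}^kh/(2\|M_{\bar A}\|_{L^{p'}})^k$ would in any case produce $[Rh]_{A_1}\lesssim\|M_{\bar A}\|_{L^{p'}}$ rather than the clean $c_np'$, potentially worsening the final dependence.
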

From the preceding theorem, by using \eqref{eq:MaBdd} in below, if we choose $A(t)=t^p\left(1+\log^+t\right)^{p-1+\delta}$  with $\delta\in(0,1]$ we obtain the following result, which is in turn a quantitative version of Corollary \ref{qualitativeiterationp-p}.
\begin{Corollary} \label{Cor:Conj} Let $1<p<\infty$ and $T$ be either $T_\Omega$ with $\Omega \in L^\infty$ satisfying $\int_{\mathbb{S}^{n-1}}\Omega =0$ or $B_{(n-1)/2}$. Then, for any $f\in C_c^{\infty}(\mathbb{R}^n)$,
\begin{equation}
\|Tf\|_{L^{p}(w)}\leq C_T(p')^{2}p^2\big(\frac{1}{\delta}\big)^\frac{1}{p'}\|f\|_{L^{p}(M_{L(\log L)^{p-1+\delta}}w)}.\label{eq:LlogL}
\end{equation}
The inequality above is sharp in the sense that $\delta =0$ is false.
\end{Corollary}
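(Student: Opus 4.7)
The plan is to apply Theorem \ref{Thm:A1} directly with the concrete Young function
$$A(t)=t^p(1+\log^+ t)^{p-1+\delta},\qquad \delta\in(0,1],$$
and to evaluate each factor on the right-hand side of \eqref{eq:MA}. The three ingredients to be computed are: the complementary Young function $\bar A$, the operator norm $\|M_{\bar A}\|_{L^{p'}}$, and the identification of $M_{A_p}(w)$ with $M_{L(\log L)^{p-1+\delta}}w$. Once these three pieces are in place, \eqref{eq:LlogL} will follow by direct substitution.

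First, $A$ is a Young function (convexity is immediate from the $t^p$ factor), and the standard calculation for complementary functions of $L^p(\log L)^\alpha$ type yields
$$\bar A(t)\sim t^{p'}(1+\log^+ t)^{-(1+\delta/(p-1))}.$$
Next, invoking \eqref{eq:MaBdd} to control $\|M_{\bar A}\|_{L^{p'}}$, the associated $B_{p'}$-integral computes as
$$\int_c^\infty \frac{\bar A(t)}{t^{p'}}\,\frac{dt}{t}\sim\int_0^\infty (1+s)^{-(1+\delta/(p-1))}\,ds=\frac{p-1}{\delta},$$
which, together with the explicit $p$-dependent prefactor provided by \eqref{eq:MaBdd}, produces a bound of order $p^2(1/\delta)^{1/p'}$. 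Finally, the elementary rescaling $A_p(t)\sim t(1+\log^+ t)^{p-1+\delta}$ identifies $M_{A_p}(w)$ with $M_{L(\log L)^{p-1+\delta}}w$ up to an absolute constant. Substituting all three estimates into \eqref{eq:MA} and collecting constants yields \eqref{eq:LlogL} with the claimed constant $C_T(p')^2p^2(1/\delta)^{1/p'}$.

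For the sharpness claim---that $\delta=0$ fails---note first that when $\delta=0$ the complementary Young function satisfies $\bar A(t)\sim t^{p'}(1+\log^+ t)^{-1}$, so $\int^\infty \bar A(t)/t^{p'}\,dt/t$ diverges and the scheme above breaks down. To convert this into a genuine counterexample one adapts the classical test families of \cite{P0} to a suitable rough kernel $\Omega\in L^\infty(\mathbb{S}^{n-1})$ for which $T_\Omega$ retains the Hilbert-transform type cancellation at the relevant scale, producing a sequence $(f_N,w_N)$ along which the ratio $\|T_\Omega f_N\|_{L^p(w_N)}/\|f_N\|_{L^p(M_{L(\log L)^{p-1}}w_N)}$ blows up. The principal obstacle throughout is the careful bookkeeping of $p$- and $\delta$-dependence across the Orlicz-norm estimates, ensuring that the final constant matches the claimed $(p')^2p^2(1/\delta)^{1/p'}$ rather than a weaker bound; the sharpness part, while less delicate, still demands a concrete choice of $\Omega$ so that the rough operator exhibits the same logarithmic blow-up as the classical counterexamples.
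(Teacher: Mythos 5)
Your proof of the inequality itself is correct and follows exactly the paper's route: apply Theorem \ref{Thm:A1} with $A(t)=t^p(1+\log^+t)^{p-1+\delta}$, invoke \eqref{eq:MaBdd} for $\|M_{\bar A}\|_{L^{p'}}$, and observe that $A_p(t)=A(t^{1/p})\simeq t(1+\log^+t)^{p-1+\delta}$ so that $M_{A_p}w\simeq M_{L(\log L)^{p-1+\delta}}w$. One small point of bookkeeping: your computation of the $B_{p'}$-integral, $\beta_{p'}(\bar A)\approx(p-1)/\delta$, fed into Lemma \ref{lem:Young} would only give $\|M_{\bar A}\|_{L^{p'}}\lesssim_n 1/\delta$; the improved exponent $(1/\delta)^{1/p'}$ is not a consequence of that integral but is precisely the content of \eqref{eq:MaBdd}, established in \cite{HP}. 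Since you do invoke \eqref{eq:MaBdd}, the estimate stands, but the integral computation is redundant and the phrasing suggesting the two are combined is misleading.

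On the sharpness claim ($\delta=0$ is false), there is a gap in your treatment, though it is partly a matter of scope. The divergence of the $B_{p'}$-integral at $\delta=0$ only shows that this particular method breaks down, not that the inequality fails, as you rightly note; but your proposed remedy --- constructing a special rough kernel $\Omega$ ``retaining Hilbert-transform type cancellation'' and adapting the test families of \cite{P0} --- is both unnecessary and not carried out. The class of operators in the corollary already contains smooth Calder\'on--Zygmund kernels (e.g.\ the Hilbert or Riesz transforms correspond to bounded $\Omega$ with zero average), and for those the failure at $\delta=0$ is known; the paper disposes of the sharpness statement simply by citing \cite{HP}. So either cite the known counterexample as the paper does, or, if you intend to exhibit one, the construction of the sequence $(f_N,w_N)$ and the verification of the blow-up of the ratio must actually be supplied rather than sketched.
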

At this point we conjecture that \eqref{eq:LlogL} should hold with $pp'$ instead of $(pp')^2$.
We can also derive an improvement of some results obtained in \cite{PRRR} concerning the $A_1$ constant.

\begin{Corollary} \label{Cor:A1} Let $1<p<\infty$ and $T$ be either $T_\Omega$ with $\Omega \in L^\infty$ satisfying $\int_{\mathbb{S}^{n-1}}\Omega =0$ or $B_{(n-1)/2}$. Then, for any $f\in C_c^{\infty}(\mathbb{R}^n)$,
\begin{equation}\label{eq:Mr}
\|Tf\|_{L^{p}(w)}\leq C_Tp(p')^{2}(r')^\frac{1}{p'}\|f\|_{L^{p}\left(M_r(w)\right)}.
\end{equation}
If, moreover, $w\in A_{\infty}$ then
\begin{equation}\label{eq:M}
\|Tf\|_{L^{p}(w)}\leq C_Tp(p')^{2}[w]_{A_{\infty}}^{\frac{1}{p'}}\|f\|_{L^{p}(Mw)}.
\end{equation}
Furthermore, if $w\in A_{1}$ then
\begin{equation}
\label{eq:A1}
\|T\|_{L^{p}(w)}\leq C_Tp(p')^{2}[w]_{A_{1}}^{\frac{1}{p}}[w]_{A_{\infty}}^{\frac{1}{p'}} \leq C_Tp(p')^{2}[w]_{A_{1}}.
\end{equation}

Also, as a direct consequence of \cite[Corollary 4.3]{D}, if $w\in A_{q}$,
for $1\leq q<p$, then
\[
\|T\|_{L^{p}(w)}\leq c_{n,p,q}C_T[w]_{A_{q}}.
\]
In all the inequalities above, $C_T$ is the one in \eqref{eq:CT}.
\end{Corollary}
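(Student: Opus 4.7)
Corollary \ref{Cor:A1} is a chain of four progressively specialized estimates, and the strategy is to derive each from the previous. The base estimate \eqref{eq:Mr} will follow from Theorem \ref{Thm:A1} by choosing a power-type Young function; \eqref{eq:M} will follow from \eqref{eq:Mr} via the sharp reverse H\"older inequality for $A_\infty$ weights; \eqref{eq:A1} will follow from \eqref{eq:M} via the defining property of $A_1$; and the final $A_q$ bound will follow from \eqref{eq:A1} by Duoandikoetxea's sharp off-diagonal extrapolation.

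For \eqref{eq:Mr}, I would specialize Theorem \ref{Thm:A1} to $A(t) = t^{pr}$ (a Young function since $pr>1$). Then the scaled function is $A_p(t)=A(t^{1/p})=t^r$, so $M_{A_p}=M_r=(M(\cdot)^r)^{1/r}$, matching the right-hand side of \eqref{eq:Mr}. Its complementary function satisfies $\bar A(t)\simeq t^{(pr)'}$, and hence $M_{\bar A}g=(M|g|^{(pr)'})^{1/(pr)'}$. Setting $s:=(pr)'$, and using the standard identities
\[
\|M_{\bar A} g\|_{L^{p'}} = \|M|g|^s\|_{L^{p'/s}}^{1/s}\qquad\text{and}\qquad \|M\|_{L^{p'/s}\to L^{p'/s}}\lesssim (p'/s)'= \frac{pr-1}{r-1},
\]
a direct calculation yields
\[
\|M_{\bar A}\|_{L^{p'}\to L^{p'}}\lesssim \left(\frac{pr-1}{r-1}\right)^{1-1/(pr)}.
\]
A short case analysis, splitting according to whether $r$ is close to $1$ or bounded away from $1$ and using $r-1=r/r'$, shows this is bounded by a constant times $p\,(r')^{1/p'}$. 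Inserting this back into \eqref{eq:MA} proves \eqref{eq:Mr}.

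For \eqref{eq:M}, I would appeal to the sharp reverse H\"older inequality for $A_\infty$ weights of Hyt\"onen--P\'erez: there is a dimensional constant $\tau_n$ such that the exponent $r_w:=1+1/(\tau_n[w]_{A_\infty})$ satisfies $M_{r_w}w\lesssim Mw$ pointwise and $(r_w)'\lesssim [w]_{A_\infty}$; substituting $r=r_w$ into \eqref{eq:Mr} produces \eqref{eq:M}. For \eqref{eq:A1}, the defining property $Mw\le[w]_{A_1}w$ a.e.\ yields $\|f\|_{L^p(Mw)}\le[w]_{A_1}^{1/p}\|f\|_{L^p(w)}$, and combining this with \eqref{eq:M} together with the standard inequality $[w]_{A_\infty}\le c_n[w]_{A_1}$ gives both bounds in \eqref{eq:A1}. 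Finally, the $A_q$ estimate is an immediate application of \cite[Corollary 4.3]{D} to the linear $A_1$ bound \eqref{eq:A1}, which transfers the $A_1$ dependence to a linear $A_q$ dependence for every $1\le q<p$.

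I expect the only genuine technical point to be the bookkeeping in the proof of \eqref{eq:Mr}, where one must carefully extract the clean factor $p\,(r')^{1/p'}$ from $((pr-1)/(r-1))^{1-1/(pr)}$ rather than a coarser combination of $p$, $p'$ and $r'$. The remaining steps are essentially formal applications of well-established weight-theoretic machinery.
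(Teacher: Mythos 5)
Your proposal is correct and follows essentially the same route as the paper: choose $A(t)=t^{pr}$ in Theorem \ref{Thm:A1} so that $M_{A_p}=M_r$ and $M_{\bar A}\le M_{(pr)'}$ with $\|M_{(pr)'}\|_{L^{p'}}\le c_n p (r')^{1/p'}$ (the paper's \eqref{eq:Comptpr}--\eqref{eq:Comptprbdd}), then take $r=1+\frac{1}{\tau_n[w]_{A_\infty}}$ and use Lemma \ref{Lem:RHI} to get \eqref{eq:M}, the definition of $A_1$ (plus $[w]_{A_\infty}\le c_n[w]_{A_1}$) for \eqref{eq:A1}, and \cite[Corollary 4.3]{D} for the $A_q$ bound. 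The only difference is that you carry out explicitly the norm computation the paper dismisses as ``standard computations,'' and your bookkeeping there (bounding $((pr-1)/(r-1))^{1-1/(pr)}$ by $c\,p(r')^{1/p'}$ via $(r')^{1/(pr')}\le e^{1/e}$) is sound.
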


We would like to point out the fact that similar results for Carleson operators were obtained in \cite{DL} and later on in \cite{B}.

In the previous subsection we showed that a new qualitative proof of the endpoint estimate obtained in \cite{FS} could be obtained via extrapolation (see the explanation just after Corollary \ref{extrapcorollary}). Now we present a quantitative version of the weighted weak type $(1,1)$ estimate, which is formulated as follows.
\begin{Theorem}\label{Thm:weak11}
Let $w\in A_1$ and $T$ be either $T_\Omega$ with $\Omega \in L^\infty$ satisfying $\int_{\mathbb{S}^{n-1}}\Omega =0$ or $B_{(n-1)/2}$. Then
\[
\|T\|_{L^1(w)\rightarrow L^{1,\infty}(w)}\le C_T [w]_{A_1}[w]_{A_\infty}\log_2([w]_{A_\infty}+1).
\]
\end{Theorem}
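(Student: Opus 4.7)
The plan is to combine a classical Calder\'on--Zygmund decomposition of $f$ with the sparse domination formula of \cite{CACDPO} (Theorem~\ref{Thm:Sparse}) and the quantitative $L^p(w)$ bound of Corollary~\ref{Cor:A1}, in the spirit of Lerner--Ombrosi--P\'erez's $A_1$ weak-$(1,1)$ proof for Calder\'on--Zygmund operators, adapted to the rough setting. Given $f\in L^1(w)$ and $\lambda>0$, I decompose $f = g + b$ at height $\lambda$ relative to Lebesgue measure, with $b = \sum_j b_j$, each $b_j$ supported on a cube $Q_j$ from a pairwise disjoint family, $\int b_j = 0$ and $\|b_j\|_{L^1}\lesssim \lambda|Q_j|$. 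Writing $\Omega^{*} = \bigcup_j 2Q_j$, the $A_1$ property combined with the CZ selection $\frac{1}{|Q_j|}\int_{Q_j}|f|>\lambda$ yields
\[
w(\Omega^{*})\lesssim [w]_{A_1}\sum_j |Q_j|\,\essinf_{Q_j} w \;\lesssim\; [w]_{A_1}\lambda^{-1}\|f\|_{L^1(w)},
\]
which is already within the target bound and accounts for the $[w]_{A_1}$ factor.

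For the good part, Chebyshev's inequality gives $w(\{|Tg|>\lambda/2\})\le 4\lambda^{-2}\|Tg\|_{L^2(w)}^2$. Corollary~\ref{Cor:A1} at $p=2$ supplies $\|T\|_{L^2(w)}\lesssim C_T[w]_{A_1}^{1/2}[w]_{A_\infty}^{1/2}$; combined with the standard estimate $\|g\|_{L^2(w)}^2\le \|g\|_\infty\|g\|_{L^1(w)}\lesssim \lambda\|f\|_{L^1(w)}$ this produces
\[
w\bigl(\{|Tg|>\lambda/2\}\bigr)\;\lesssim\; [w]_{A_1}[w]_{A_\infty}\lambda^{-1}\|f\|_{L^1(w)},
\]
which matches the target (the additional $\log_2([w]_{A_\infty}+1)$ must come from the bad part).

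The main obstacle is estimating $w\bigl(\{|Tb|>\lambda/2\}\setminus\Omega^{*}\bigr)$. Because $T$ lacks kernel smoothness, the classical estimate $\int_{(2Q)^c}|K(x-y)-K(x-x_Q)|\,w(x)\,dx<\infty$ is unavailable; instead I would invoke the bilinear sparse domination from Theorem~\ref{Thm:Sparse} paired with $\chi_{(\Omega^{*})^c}w$, obtaining a sparse family $\mathcal{S}$ with
\[
\int_{(\Omega^{*})^c}|Tb|\,w\,dx \;\lesssim\; C_T\sum_{Q\in\mathcal{S}} |Q|\,\langle b\rangle_{Q}\,\langle\chi_{(\Omega^{*})^c}w\rangle_{Q,L\log L}.
\]
Using the sharp reverse H\"older inequality with exponent $r = 1 + c/[w]_{A_\infty}$, one controls $\langle w\rangle_{Q,L\log L}$ by $[w]_{A_\infty}\log_2([w]_{A_\infty}+1)\,\langle w\rangle_Q$; together with $\langle w\rangle_Q\lesssim [w]_{A_1}\essinf_Q w$ and the disjoint-support/zero-mean properties of the $b_j$, the sparse structure reduces the right-hand side to $[w]_{A_1}[w]_{A_\infty}\log_2([w]_{A_\infty}+1)\sum_j|Q_j|\essinf_{Q_j}w$, which is in turn $\lesssim [w]_{A_1}[w]_{A_\infty}\log_2([w]_{A_\infty}+1)\lambda^{-1}\|f\|_{L^1(w)}$. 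Combining the three contributions delivers the stated bound. The hardest point is extracting precisely the logarithmic gain in $[w]_{A_\infty}$ from the sparse bad-part estimate, rather than the naive factor $[w]_{A_\infty}^2$ that a direct application of the Coifman--Fefferman bound of Theorem~\ref{ThmTfMf} would produce.
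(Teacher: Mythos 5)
Your overall outline (Calder\'on--Zygmund decomposition, exceptional set from $A_1$, good part via Chebyshev, bad part as the crux) is the right skeleton, but both the good-part and bad-part estimates contain substantive gaps, and the bad-part strategy is in fact the wrong one.

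\emph{Good part.} Applying Chebyshev at $p=2$ together with Corollary~\ref{Cor:A1} costs you $\|T\|_{L^2(w)}^2\lesssim [w]_{A_1}[w]_{A_\infty}$, and the estimate $\|g\|_{L^1(w)}\lesssim\|f\|_{L^1(w)}$ that you use is not correct without a further factor of $[w]_{A_1}$ (the averaged part of $g$ equals $\langle f\rangle_{Q_j}$ on $Q_j$, and converting $w(Q_j)$ to $\int_{Q_j}|f|w$ costs $[w]_{A_1}$). Once that factor is restored the good part contributes $[w]_{A_1}^2[w]_{A_\infty}$, which exceeds the target by a full factor of $[w]_{A_1}$. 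The paper avoids this by using Chebyshev with an exponent $p_0$ tending to $1$ (specifically $p_0=1+1/\log_2([w]_{A_\infty}+1)$) together with the two-weight Fefferman--Stein bound $\|T\|_{L^{p_0}(w)}\lesssim\|f\|_{L^{p_0}(M_rw)}$ from \eqref{eq:Mr}, integrating against $M_r(w\chi_{\mathbb{R}^n\setminus E})$ rather than $w$; this yields $[w]_{A_1}(\log_2([w]_{A_\infty}+1))^2$ with no extra $[w]_{A_1}$ and no extra $[w]_{A_\infty}$.

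\emph{Bad part.} This is where your approach cannot work. The sparse domination of Theorem~\ref{Thm:Sparse} controls $|\langle Tb,g\rangle|$ by $\sum_Q|Q|\langle|b|\rangle_Q\langle g\rangle_{s,Q}$, which only sees $|b|$ and thus completely discards the zero-mean property of the $b_j$ and the fact that $b$ is supported in $\Omega^*$. Without that cancellation there is no mechanism to extract the needed $\lambda^{-1}\|f\|_{L^1(w)}$ with constant $[w]_{A_1}[w]_{A_\infty}\log_2([w]_{A_\infty}+1)$: the best one gets from the sparse sum, even after the sharp reverse H\"older step, is essentially $[w]_{A_\infty}\int M(b)\,w$, which is not controllable by $\|f\|_{L^1(w)}$. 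The actual proof takes a genuinely different route: it dyadically decomposes the kernel into pieces $K_j$ supported in annuli $2^{j-2}\le|x|\le 2^j$, groups the bad functions by scale as $B_{j-s}$, and for each offset $s$ compares $K_j$ with Seeger's auxiliary kernels $\Gamma_j^s$. The $L^2$ bound for $\sum_j\Gamma_j^s*B_{j-s}$ (which is where cancellation enters) and the $L^1$ bound for $\sum_j(K_j-\Gamma_j^s)*B_{j-s}$ both carry exponential decay $2^{-s\varepsilon}$; combining them with an unweighted distributional estimate and the $L^1(w)$ estimate via Fan--Sato's interpolation with change of measure gives a geometric decay in the weighted setting. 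The $\log_2([w]_{A_\infty}+1)$ factor appears precisely by splitting the sum over $s$ at $s_0\eqsim[w]_{A_\infty}\log_2([w]_{A_\infty}+1)$: the first $s_0$ terms are handled by the crude $L^1$ estimate at cost $s_0[w]_{A_1}$, and the tail is summed using the exponential decay. None of this structure is visible to the sparse operator, so the sparse route cannot recover the claimed constant.

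In short: the skeleton is right, but the good-part exponent must be pushed to $1$ (with a Fefferman--Stein rather than $A_1$-weighted bound) and the bad part requires the Seeger/Fan--Sato machinery, not sparse domination.
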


Let us compare this result with the corresponding result for Calder\'on-Zygmund operators, which was first proved in \cite{LOP2} (see also \cite{LOP,HP,DLR}) and which is optimal, see \cite{LNO}.  In our case, we have the extra constant  $[w]_{A_\infty}$  due to the roughness of the kernel. However, we believe that this is the best possible bound we can expect with the current techniques.
Indeed, it is not explicitly stated in \cite{V, FS, FS1, CD} how the weighted weak type $(1,1)$ bound depends on the constant. But it is possible to check that our result improves the implicit constant obtained in those papers.
Finally, we also study independently $T_{\Omega}$ for the case of $\Omega \in L^{q}(\mathbb{S}^{n-1})$, $1<q<\infty$ (see \cite{DuoTAMS} and \cite{Wa} for more backgrounds). Specifically, we prove the following result for sparse operator $\mathcal A_{r, \mathcal S}$, which can be interesting by itself (for definitions and basics about sparse families, see Subsection \ref{subsec:known}).
\begin{Theorem}\label{Thm:omegalq}
Let $r>1$, $w$ be a weight and $\mathcal S$ be a sparse family. Let $A$ be a Young function such that $\bar A \in B_{p'}$. For $f\ge 0$, set
\[
\mathcal A_{r, \mathcal S}(f)(x)= \sum_{Q\in \mathcal S} \langle f ^r \rangle_Q^{\frac 1r}\chi_Q(x).
\]
Then for $p>r$, there holds
\[
\|\mathcal A_{r,\mathcal S}(f)\|_{L^p(w)}\le (c_n p')^{\frac {r(p-1)}{p-r}}  \big(\frac p{r}\big)'  \|M_{\bar{A}}\|_{L^{p'}}\|f\|_{L^p(M_{A_p}w)}.
\]
\end{Theorem}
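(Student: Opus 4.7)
The plan is to adapt the proof of Theorem~\ref{Thm:A1} (which is the $r=1$ version of this bound) by replacing the Lebesgue averages $\langle f\rangle_Q$ with the $L^r$-averages $\langle f^r\rangle_Q^{1/r}$, and by inserting an extra H\"older step with exponents $(p/r,(p/r)')$ which is responsible for the new factor $(p/r)'$ in the constant.

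First, by $L^p(w)$-duality, for $g\ge 0$ with $\|g\|_{L^{p'}(w)}=1$, together with the sparsity property $|Q|\le 2|E_Q|$ and the generalized H\"older inequality for the conjugate Young function pair $(A,\bar A)$, one obtains
\begin{equation*}
\int \mathcal A_{r,\mathcal S}(f)\,g\,w\,dx
=\sum_{Q\in\mathcal S}\langle f^r\rangle_Q^{1/r}\int_Q gw
\le 4\sum_{Q\in\mathcal S}\langle f^r\rangle_Q^{1/r}\|g\|_{\bar A,Q}\|w\|_{A,Q}|E_Q|.
\end{equation*}
Call the last sum $\Sigma$. Using the disjointness of $\{E_Q\}$ together with the pointwise dominations $\langle f^r\rangle_Q^{1/r}\le M_rf$, $\|g\|_{\bar A,Q}\le M_{\bar A}g$, and $\|w\|_{A,Q}\le M_Aw$ valid on each $E_Q\subseteq Q$, we obtain
\begin{equation*}
\Sigma\le\int_{\mathbb R^n} M_rf(x)\,M_{\bar A}g(x)\,M_Aw(x)\,dx.
\end{equation*}

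Next, I would apply H\"older's inequality with exponents $p$ and $p'$, inserting the factor of unity $(M_{A_p}w)^{1/p}(M_{A_p}w)^{-1/p}$ so as to separate the $f$-piece from the $g$-piece:
\begin{equation*}
\int M_rf\,M_{\bar A}g\,M_Aw
\le \|M_rf\|_{L^p(M_{A_p}w)}\,\bigl\|M_{\bar A}g\cdot M_Aw\cdot(M_{A_p}w)^{-1/p}\bigr\|_{L^{p'}}.
\end{equation*}
The second factor, after absorbing $M_Aw\cdot(M_{A_p}w)^{-1/p}$ into $w^{1/p'}$ via the defining relation between the Young functions $A$ and $A_p$, reduces to $\|M_{\bar A}g\|_{L^{p'}(w)}\le\|M_{\bar A}\|_{L^{p'}}\|g\|_{L^{p'}(w)}$, contributing the factor $\|M_{\bar A}\|_{L^{p'}}$. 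For the first factor I would write $\|M_rf\|_{L^p(M_{A_p}w)}=\|M(f^r)\|_{L^{p/r}(M_{A_p}w)}^{1/r}$ and invoke the sharp Buckley-type $L^{p/r}$-bound of the Hardy--Littlewood maximal operator (whose norm is $\lesssim c_n(p/r)'\sim c_np'$); compounding the $1/r$-th power with the conversion between $L^{p/r}$ and $L^p$-norms yields the claimed dependence $(c_np')^{r(p-1)/(p-r)}\cdot(p/r)'$.

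The main obstacle is the Young function matching step: one must verify that $A_p$ has been defined (as in the P\'erez framework adopted in Theorem~\ref{Thm:A1}) so that the product $(M_Aw)(M_{A_p}w)^{-1/p}$ is controlled, in the appropriate averaged sense on sparse cubes, by $w^{1/p'}$; this is what makes possible the absorption of the excess maximal weight on the $g$-piece. Once this matching is secured and the Buckley-type constants for the maximal operator are tracked carefully through the $L^{p/r}$--$L^p$ passage needed for $M_rf$, the stated bound $(c_np')^{r(p-1)/(p-r)}(p/r)'\|M_{\bar A}\|_{L^{p'}}$ emerges with all constants in their expected places.
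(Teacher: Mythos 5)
Your skeleton (duality, generalized H\"older on each sparse cube, disjointness of the sets $E_Q$, then maximal functions) is close in spirit to the proof the paper gives in Appendix~\ref{B}, but the way you place the weight in the duality breaks both of the decisive steps. First, by dualizing in $L^p(w)$ you are forced to split the cube average of $gw$ as $\|g\|_{\bar A,Q}\|w\|_{A,Q}$, so the weight enters through $M_Aw$; but the absorption you then need, $M_Aw\cdot(M_{A_p}w)^{-1/p}\lesssim w^{1/p'}$, is false. Since $A_p(t)=A(t^{1/p})$ one has $\|w\|_{A_p,Q}=\|w^{1/p}\|_{A,Q}^p$, hence $M_{A_p}w=(M_A(w^{1/p}))^p$, and the claimed inequality would say $M_Aw\le w^{1/p'}M_A(w^{1/p})$ pointwise; taking $w$ essentially a characteristic function, the right-hand side vanishes off its support while the left-hand side does not. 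Nor can you retreat to an ``averaged sense on sparse cubes'': at this point of your argument you have already summed over $Q$ and passed to the pointwise integral $\int M_rf\,M_{\bar A}g\,M_Aw$, so the cube structure is gone. The fix is exactly what Appendix~\ref{B} does: dualize against \emph{unweighted} $g\in L^{p'}$, so that the cube-level product is $w^{1/p}g$ and the relevant Orlicz norm is $\|w^{1/p}\|_{A,Q}$, for which the clean inequality $\|w^{1/p}\|_{A,Q}=\|w\|_{A_p,Q}^{1/p}\le\inf_{Q}(M_{A_p}w)^{1/p}$ is available; the exponent $1/p$ on $w$ is essential for the defining relation between $A$ and $A_p$ to be usable at all.

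Second, even granting the absorption, your treatment of the $f$-factor is unjustified: $\|M_rf\|_{L^p(M_{A_p}w)}=\|M(f^r)\|_{L^{p/r}(M_{A_p}w)}^{1/r}$ cannot be estimated by Buckley's \emph{unweighted} bound $\|M\|_{L^{p/r}}\le c_n(p/r)'$, because the norm is taken with respect to the weight $M_{A_p}w$, and $M_{A_p}w$ is not (uniformly) an $A_{p/r}$ weight -- only its powers $(M_{A_p}w)^\delta$, $\delta<1$, are $A_1$. Controlling the weight on the $f$-side is precisely the crux of the theorem, and the paper handles it in two different ways: in Section~\ref{sec:Thmomegalq} via the ``maximal function trick'' (replacing $\langle f^r\rangle_Q^{1/r}$ by the maximal function $M_r^u$ with respect to the measure $u=(M_{A_p}w)^{r/(r-p)}$, reducing to a linear sparse form, and running the Rubio de Francia algorithm of Lemma~\ref{lem:RdF} together with \eqref{eq:lop}; this is where the factors $(p/r)'$ and $(c_np')^{r(p-1)/(p-r)}$ -- the latter from the $A_1$ constant of $(M_{A_p}w)^\delta$ raised to the power $\frac{pr}{(p-r)r'}$ -- actually come from), and in Appendix~\ref{B} by inserting $v^{r/p}v^{-r/p}$ inside the $r$-average and using an auxiliary Young function $B$ with $\bar B(t)=t^{\frac12(\frac pr+1)}\in B_{p/r}$, so that the maximal operator acting on the $f$-piece is $M_{\bar B}$ applied to $f^rv^{r/p}$ in \emph{unweighted} $L^{p/r}$. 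Your proposal contains neither mechanism, and the heuristic ``compounding the $1/r$-th power with the $L^{p/r}$--$L^p$ conversion'' does not produce the exponent $\frac{r(p-1)}{p-r}$; as it stands the argument has a genuine gap at both key steps.
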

We remark that the qualitative version of Theorem \ref{Thm:omegalq} is also obtained by Beltran \cite{B} by using two weight bump theorem. We shall give two proofs for Theorem \ref{Thm:omegalq}, one using the Rubio de Francia algorithm and the other one given in the Appendix~\ref{B} using the two weight bump theorem.  Combining Theorem \ref{Thm:omegalq} and the sparse domination principle in \cite{CACDPO}, we obtain the following:
\begin{Theorem}\label{Thm:Q}
Given $1<q<\infty$, let $\Omega\in L^{q,1}\log L(\mathbb{S}^{n-1})$ have zero average and $w$ be a weight. Let $A$ be a Young function such that $\bar A \in B_{p'}$. Then for $p>q'$, there holds
\[
\|T_\Omega(f)\|_{L^p(w)}\le c_n q \|\Omega\|_{L^{q,1}\log L(\mathbb{S}^{n-1})}  \big(\frac p{q'}\big)' (c_n p')^{\frac {q'(p-1)}{p-q'}} \|M_{\bar{A}}\|_{L^{p'}}\|f\|_{L^p(M_{A_p}w)},
\]
for any $f\in C_c^{\infty}(\mathbb{R}^n)$.
\end{Theorem}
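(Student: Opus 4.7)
\textbf{Proof plan for Theorem \ref{Thm:Q}.}

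The strategy is to combine two ingredients that are already at our disposal: the sparse domination principle of Conde--Alonso et al.\ \cite{CACDPO} adapted to the refined scale of Lorentz--Zygmund spaces on $\mathbb{S}^{n-1}$, and the sharp sparse bound for $\mathcal A_{r,\mathcal S}$ provided by Theorem \ref{Thm:omegalq}.

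\emph{Step 1 (sparse domination).} I would first quote (a refinement of) the bilinear sparse form bound from \cite{CACDPO} for $T_\Omega$, namely that for every pair $f,g\in C_c^\infty(\mathbb{R}^n)$ there exists a finite collection of sparse families $\mathcal S_1,\dots,\mathcal S_N$ (with $N=N(n)$) such that
\[
|\langle T_\Omega f,g\rangle|\le c_n q\,\|\Omega\|_{L^{q,1}\log L(\mathbb{S}^{n-1})}\sum_{j=1}^N\sum_{Q\in\mathcal S_j}\langle |f|\rangle_{q',Q}\,\langle |g|\rangle_{1,Q}\,|Q|.
\]
Here the passage from the $L^\infty$ version in \cite{CACDPO} to $\Omega\in L^{q,1}\log L$ is a Lorentz-scale refinement that tracks the constant $c_n q\|\Omega\|_{L^{q,1}\log L(\mathbb{S}^{n-1})}$; essentially the same argument as in \cite{CACDPO} applies, with the Young-type truncation of $\Omega$ replaced by a Lorentz layer-cake decomposition.

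\emph{Step 2 (dualize into sparse operators).} Writing the right-hand side as
\[
\sum_j\int_{\mathbb{R}^n}\mathcal A_{q',\mathcal S_j}(|f|)(x)\,|g(x)|\,dx,
\]
and then taking the supremum over $g\in C_c^\infty$ with $\|g\|_{L^{p'}(w^{1-p'})}\le 1$ by the standard $L^p(w)/L^{p'}(w^{1-p'})$ duality argument, one obtains
\[
\|T_\Omega f\|_{L^p(w)}\le c_n q\,\|\Omega\|_{L^{q,1}\log L(\mathbb{S}^{n-1})}\max_{1\le j\le N}\|\mathcal A_{q',\mathcal S_j}(|f|)\|_{L^p(w)}.
\]

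\emph{Step 3 (apply Theorem \ref{Thm:omegalq}).} Since $p>q'$, Theorem \ref{Thm:omegalq} with $r=q'$ applied to each sparse family yields
\[
\|\mathcal A_{q',\mathcal S_j}(|f|)\|_{L^p(w)}\le (c_n p')^{\frac{q'(p-1)}{p-q'}}\bigl(\tfrac{p}{q'}\bigr)'\|M_{\bar A}\|_{L^{p'}}\|f\|_{L^p(M_{A_p}w)}.
\]
Combining the bounds from Steps 2 and 3 gives the stated inequality.

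The main obstacle will be Step 1: one has to confirm that the sparse domination constant in \cite{CACDPO}, originally stated with $\|\Omega\|_{L^\infty}$ (or an $L^q$-type norm), admits the refinement with constant $c_n q\|\Omega\|_{L^{q,1}\log L(\mathbb{S}^{n-1})}$ and with averages on the $q'$-scale. Once this is secured, the rest of the argument is a clean concatenation and the explicit dependence on $p$, $q$ and the Young function $A$ follows mechanically from Theorem \ref{Thm:omegalq}.
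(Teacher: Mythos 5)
Your proposal is correct and is essentially the paper's own route: combine the bilinear sparse domination for $T_\Omega$ with $\Omega\in L^{q,1}\log L(\mathbb{S}^{n-1})$ at the exponent $s=q'$ (which produces the factor $s'=q$ and the constant $c_n q\|\Omega\|_{L^{q,1}\log L(\mathbb{S}^{n-1})}$) with Theorem \ref{Thm:omegalq} applied with $r=q'$. Your ``main obstacle'' in Step 1 is not actually an obstacle: the needed bound is exactly Theorem \ref{Thm:Sparse} (Theorems A and B of \cite{CACDPO}) as quoted in the paper, and the placement of the $q'$-average on $f$ rather than on $g$ follows at once by applying that theorem to the transpose $T_{\tilde\Omega}$, $\tilde\Omega(x)=\Omega(-x)$, which has the same $L^{q,1}\log L$ norm and zero average.
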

By $\Omega\in L^{q,1}\log L(\mathbb{S}^{n-1})$ we mean that the following norm is finite (cf. \cite{CACDPO})
$$
\|\Omega\|_{L^{q,1}\log L(\mathbb{S}^{n-1})}:=q\int_0^{\infty}t \log(e+t)|\{\theta\in \mathbb{S}^{n-1}:|\Omega(\theta)|>t\}|^{\frac{1}{q}}\frac{dt}{t}.
$$

Then immediately we have the following estimate.
\begin{Corollary}
Given $1<q<\infty$, let $\Omega\in L^{q,1}\log L(\mathbb{S}^{n-1})$ have zero average and $w$ be a weight. Then for $p>q'$, we have
\begin{equation}
\label{eq:Tomr}
\|T_{\Omega} f\|_{L^p(w)}\leq c_{n,p,q} \|\Omega\|_{L^{q,1}\log L(\mathbb{S}^{n-1})}  \|f\|_{L^p(M^{\lfloor p\rfloor+1} w)}.
\end{equation}
\end{Corollary}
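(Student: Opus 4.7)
The plan is to obtain \eqref{eq:Tomr} as a direct specialization of Theorem \ref{Thm:Q} by choosing a Young function $A$ that makes the ``Orlicz maximal of the weight'' $M_{A_p}w$ comparable to the iterated Hardy--Littlewood maximal function $M^{\lfloor p\rfloor+1}w$, while keeping the norm $\|M_{\bar A}\|_{L^{p'}}$ finite and absorbed into a constant depending only on $n,p,q$.

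Following the template of Corollary \ref{Cor:Conj}, I would take
\[
A(t)\;=\;t^{p}\bigl(1+\log^{+}t\bigr)^{p-1+\delta},\qquad \delta\;=\;\lfloor p\rfloor-p+1\in(0,1].
\]
The two things to verify about this $A$ are: (i) the $B_{p'}$ condition for the complementary function $\bar A$, and (ii) the pointwise comparison between $M_{A_p}$ and an iterated maximal function. For (i), a routine computation of the complementary Young function of $t^p(1+\log^+ t)^{p-1+\delta}$ shows $\bar A(t)\simeq t^{p'}(1+\log^+ t)^{-1-\delta(p'-1)}$, and since $\delta>0$, the integral $\int_{1}^{\infty}\bar A(t)/t^{p'}\,\mathrm dt/t$ is finite, giving $\bar A\in B_{p'}$; moreover $\|M_{\bar A}\|_{L^{p'}}\le c_{p,\delta}<\infty$. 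For (ii), with the definition of $A_p$ used in the preceding subsection one has $A_p(t)=t(1+\log^+ t)^{p-1+\delta}=t(1+\log^+ t)^{\lfloor p\rfloor}$, i.e.\ $M_{A_p}\,=\,M_{L(\log L)^{\lfloor p\rfloor}}$.

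The decisive ingredient is then the classical pointwise equivalence
\[
M_{L(\log L)^{k}}w(x)\;\simeq\;M^{k+1}w(x),\qquad x\in\mathbb{R}^{n},
\]
valid for every nonnegative integer $k$; applied with $k=\lfloor p\rfloor$ this yields $M_{A_p}w(x)\lesssim M^{\lfloor p\rfloor+1}w(x)$, and consequently $\|f\|_{L^{p}(M_{A_p}w)}\le c\,\|f\|_{L^{p}(M^{\lfloor p\rfloor+1}w)}$. Plugging the choice of $A$ into Theorem \ref{Thm:Q} and collecting the constants $c_n q\,(p/q')'(c_n p')^{q'(p-1)/(p-q')}\|M_{\bar A}\|_{L^{p'}}$ into a single factor $c_{n,p,q}$ produces precisely \eqref{eq:Tomr}.

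There is really no serious obstacle; the only delicate point is the bookkeeping of the two endpoint values of $\delta$. When $p\in\mathbb{Z}$ one has $\delta=1$ and everything is classical. When $p\notin\mathbb{Z}$ we need $\delta=\lfloor p\rfloor-p+1\in(0,1)$ strictly positive in order to keep $\bar A\in B_{p'}$; the constants $c_{p,\delta}$ arising from $\|M_{\bar A}\|_{L^{p'}}$ blow up as $\delta\downarrow0$, but since $\delta$ is bounded below by a quantity depending only on $p$, this only enlarges $c_{n,p,q}$ and does not affect the stated conclusion. The sharpness remark attached to Corollary \ref{Cor:Conj} (that $\delta=0$ fails) is what forces the choice $\lfloor p\rfloor+1$ rather than $\lfloor p\rfloor$ in the number of iterations of $M$, explaining why the exponent in \eqref{eq:Tomr} is the best possible within this method.
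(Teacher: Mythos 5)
Your proposal is correct and follows the same route the paper intends (the paper writes ``immediately'' without spelling it out): specialize Theorem \ref{Thm:Q} to the Young function $A(t)=t^{p}(1+\log^{+}t)^{p-1+\delta}$ with $\delta=\lfloor p\rfloor-p+1\in(0,1]$, note $A_{p}(t)=A(t^{1/p})\simeq t(1+\log^{+}t)^{\lfloor p\rfloor}$ (this is a comparison with $p$-dependent constants, not an identity, a detail worth stating), so that $M_{A_{p}}w\lesssim M_{L(\log L)^{\lfloor p\rfloor}}w\simeq M^{\lfloor p\rfloor+1}w$, while $\delta>0$ guarantees $\bar A\in B_{p'}$ with $\|M_{\bar A}\|_{L^{p'}}$ finite and absorbed into $c_{n,p,q}$. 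The bookkeeping at $\delta=1$ (integer $p$) and $\delta\in(0,1)$ (non-integer $p$), and the sharpness remark explaining why $\lfloor p\rfloor+1$ cannot be lowered to $\lfloor p\rfloor$, are all accurate.
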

Moreover, when $A(t)=t^{pr}$, we obtain the following estimate:
\begin{Corollary}
Given $1<q<\infty$, let $\Omega\in L^{q,1}\log L(\mathbb{S}^{n-1})$ have zero average and $w$ be a weight. If $1<r<\infty$, then for $p>q'$
\[
\|T_\Omega(f)\|_{L^p(w)}\le c_n q \|\Omega\|_{L^{q,1}\log L(\mathbb{S}^{n-1})} p  (r')^{\frac 1{p'}}\Big(\frac p{q'}\Big)' (c_n p')^{\frac {q'(p-1)}{p-q'}}  \|f\|_{L^p(M_{r}w)},
\]
which immediately implies
\[
\|T_\Omega(f)\|_{L^p(w)}\le c_{n,p,q} \|\Omega\|_{L^{q,1}\log L(\mathbb{S}^{n-1})} [w]_{A_1}^{\frac 1p}[w]_{A_\infty}^{\frac 1{p'}}\|f\|_{L^p(w)},\quad p>q'.
\]
Then we also have that
\[
\|T_\Omega(f)\|_{L^p(w)}\le c_{n,p,q} \|\Omega\|_{L^{q,1}\log L(\mathbb{S}^{n-1})} [w]_{A_1}\|f\|_{L^p(w)},\quad p>q'.
\]
and as a direct consequence of \cite[Corollary 4.3]{D}, if $w\in A_{s}$, with $1\leq s<p$ then
\[
\|T_\Omega(f)\|_{L^p(w)}\le c_{n,p,q,s} \|\Omega\|_{L^{q,1}\log L(\mathbb{S}^{n-1})} [w]_{A_s}\|f\|_{L^p(w)},\quad p>q'.
\]
\end{Corollary}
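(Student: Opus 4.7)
The plan is to derive the four inequalities in succession from Theorem~\ref{Thm:Q}. The main step is to specialize it by taking the power Young function $A(t)=t^{pr}$. Under the normalization $A_p(t):=A(t^{1/p})$ implicit in the paper (cf.\ the analogous choice in Corollary~\ref{Cor:Conj}), this gives $A_p(t)=t^r$, so $M_{A_p}w=M_r w$. The conjugate Young function is $\bar A(t)\simeq t^{(pr)'}$, which lies in the class $B_{p'}$ precisely when $r>1$, and a direct evaluation of the defining integral $\int_1^\infty \bar A(t)t^{-p'}\,dt/t$ together with the standard $B_{p'}$-bound on $M_{\bar A}$ yields
\[
\|M_{\bar A}\|_{L^{p'}}\;\lesssim\;\bigl((p-1)\,r'\bigr)^{1/p'}\;\le\;p\,(r')^{1/p'}.
\]
Substituting these computations into the conclusion of Theorem~\ref{Thm:Q} reproduces the first displayed inequality with exactly the constants announced.

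For the $A_1$--$A_\infty$ bound I would apply the sharp reverse H\"older inequality of Hyt\"onen--P\'erez: for any $w\in A_\infty$ the choice $r=r_w:=1+\frac{1}{c_n[w]_{A_\infty}}$ satisfies $r_w'\lesssim [w]_{A_\infty}$ and $M_{r_w}w\le 2Mw$ pointwise. When $w\in A_1$ one also has $Mw\le[w]_{A_1}w$, so $M_{r_w}w\le 2[w]_{A_1}w$. Inserting $r=r_w$ into the first inequality absorbs a factor $[w]_{A_\infty}^{1/p'}$ from $(r')^{1/p'}$ and a factor $[w]_{A_1}^{1/p}$ from the change of weight $M_{r_w}w\mapsto w$, yielding the second displayed bound.

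The third inequality is immediate from the trivial relation $[w]_{A_\infty}\le c_n[w]_{A_1}$, since $[w]_{A_1}^{1/p}[w]_{A_\infty}^{1/p'}\le c_n[w]_{A_1}^{1/p+1/p'}=c_n[w]_{A_1}$. Once this linear $A_1$ bound is in hand, the fourth inequality follows at once from the Duoandikoetxea-type extrapolation result \cite[Corollary~4.3]{D}, which upgrades any linear $A_1$ estimate at the single exponent $p$ to a linear $A_s$ estimate for every $1\le s<p$.

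The only delicate point is the specialization of Theorem~\ref{Thm:Q}: one has to be careful with the bookkeeping of Young conjugates and with the quantitative form of the $B_{p'}$ constant in order to get the $p\,(r')^{1/p'}$ factor cleanly. The remaining three steps are mechanical reformulations and present no further obstacle.
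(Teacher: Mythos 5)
Your proposal is correct and follows the same route as the paper: specialize Theorem \ref{Thm:Q} to $A(t)=t^{pr}$ so that $M_{A_p}w=M_rw$ and $M_{\bar A}\lesssim M_{(pr)'}$ with $\|M_{(pr)'}\|_{L^{p'}}\lesssim p(r')^{1/p'}$ (this is precisely \eqref{eq:Comptpr}--\eqref{eq:Comptprbdd}), then choose $r=1+\tfrac{1}{\tau_n[w]_{A_\infty}}$ and invoke the sharp reverse H\"older inequality, the $A_1$ condition, $[w]_{A_\infty}\lesssim[w]_{A_1}$, and finally \cite[Corollary 4.3]{D}. The intermediate claim $\|M_{\bar A}\|_{L^{p'}}\lesssim((p-1)r')^{1/p'}$ is stated slightly loosely, but it leads to the same bound $p(r')^{1/p'}$ the paper uses, so no gap results.
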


We sketch now the ideas used to prove our results. For the proof of  Theorem \ref{ThmTfMf}, we prove the result for $p=1$ and $p>1$ separately. In both cases, the starting point is the sparse domination by Conde--Alonso e.a. \cite{CACDPO}, whose statement we recall in Subsection \ref{subsec:known} for the sake of completeness. Then, for $p=1$, a Carleson embedding type argument is used, whereas for $p>1$ we use an argument involving principal cubes. The refinement in the strong $A_p$ estimate and the weak type inequality contained in Theorem \ref{Thm:improvedAp} are deduced by taking the sparse domination in \cite{CACDPO} and then we follow arguments in \cite{L}. The proof of Theorem \ref{Thm:A1} is a combination of Rubio de Francia algorithm and again the sparse domination in  \cite{CACDPO}. The weighted weak type $(1,1)$ estimate of Theorem \ref{Thm:weak11} is based on Corollary \ref{Cor:A1} (which follows as a consequence of Theorem  \ref{Thm:A1}), the strategy used by Seeger in \cite{S} and the approach in the work by Fan and Sato \cite{FS} (based, in its turn, on the previous works by Seeger \cite{S} and Vargas \cite{V}). Finally, the result concerning rough homogeneous singular integrals when $\Omega \in L^{q,1}\log L(\mathbb{S}^{n-1})$ merges the Rubio de Francia algorithm, the sparse domination and again ideas in \cite{L}.

The structure of the paper is as follows. In Section \ref{sec:key} we collect several definitions and known results that will be the cornerstones in our proofs. Moreover, some aspects of Young functions and associated maximal functions are expounded. In section \ref{Sec:ProofThmTfMf} we prove Theorem \ref{ThmTfMf}. Theorem \ref{Thm:improvedAp} is proven in Section \ref{sec:improvedAp}. Section \ref{sec:ThmA1} contains the proof of Theorem~\ref{Thm:A1}. The proof of Theorem \ref{Thm:weak11} is shown in Section \ref{sec:Thmweak11} and the result regarding $\Omega\in L^{q,1}\log L(\mathbb{S}^{n-1})$ in Theorem \ref{Thm:omegalq} is included in Section~\ref{sec:Thmomegalq}.

Throughout the paper we will use fairly standard notation. By $c, c_n, c_{T}\ldots$ we mean positive constants that are either universal or depending on the subindices, but not depending on the essential variables. These constants may vary at each occurrence. For an operator $T$, we will denote by $\|T\|_{B_1\to B_2}$, or just $\|T\|_{B_1}$ if $B_1=B_2$, the norm of the operator, i.e., the least constant $N$ such that $\|Tf\|_{B_2}\le N\|f\|_{B_1}$.
We will denote the average of a function $f$ over a cube $Q$ by $\langle f \rangle_Q:=|Q|^{-1}\int_Qf(x)\,dx$. For any function $f$ and a weight $w$, we shall use $\langle f \rangle^w_Q:=w(Q)^{-1}\int_Qf(x)w(x)\,dx$, where $w(Q):=\int_Q w(x)\,dx$. Moreover, for $1<s < \infty$, the notation $\langle f \rangle_{s,Q}$ means $(\langle |f|^s \rangle_{Q})^{1/s}$.

\section{Some definitions and key results}
\label{sec:key}
In this section we gather some results and definitions that will be fundamental for the proofs of our main results.

\subsection{Some basics of the $A_p$ theory of weights}
\label{AptheoryOfWeights}

For $1<p<\infty$, we say that a locally integrable function $w\geq0$ belongs to the Muckenhoupt $A_{p}$ class if
\[
[w]_{A_{p}}:=\sup_{Q}\Big(\frac{1}{|Q|}\int_{Q}w\Big)\Big(\frac{1}{|Q|}\int_{Q}w^{1-p'}\Big)^{p-1}<\infty,
\]
where $p'$ is such that $\frac{1}{p}+\frac{1}{p'}=1$.
We call $[w]_{A_{p}}$ the $A_{p}$ constant or characteristic. If
$p=1$ we say that $w\in A_{1}$ if there exists a constant $\kappa>0$
such that
\begin{equation}\label{eq:CondA1}
Mw(x)\leq\kappa w(x)\qquad\text{a.e. }x\in\mathbb{R}^{n}.
\end{equation}
We define the $A_{1}$ constant or characteristic $[w]_{A_{1}}$ as
the infimum of all $\kappa$ such that \eqref{eq:CondA1} holds. It
is also a well known fact that the $A_{p}$ classes are increasing,
namely that $p\leq q\Rightarrow A_{p}\subset A_{q}.$ We can define
in a natural way the $A_{\infty}$ class as $A_{\infty}=\bigcup_{p\geq1}A_{p}$.
Associated to this $A_{\infty}$ class it is also possible to define
an $A_{\infty}$ constant as
\[
[w]_{A_{\infty}}:=\sup_{Q}\frac{1}{w(Q)}\int_{Q}M(w\chi_{Q})dx.
\]
This constant was essentially introduced by N. Fujii in \cite{F}
and rediscovered by Wilson in \cite{W}.

Another basic tool for us is the following classical reverse H\"older inequality with optimal bound, as
obtained in \cite{HPAinfty} (see also \cite{HPR1}).
\begin{Lemma} 
\label{Lem:RHI}Let $w\in A_{\infty}$. There exists $\tau_{n}>0$
such that for every $\delta\in\left[0,\frac{1}{\tau_{n}[w]_{A_{\infty}}}\right]$
and every cube $Q$
\[
\left(\frac{1}{|Q|}\int_{Q}w^{1+\delta}\right)^{\frac{1}{1+\delta}}\leq \frac{2}{|Q|}\int_{Q}w.
\]
\end{Lemma}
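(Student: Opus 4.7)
The plan is to derive this sharp reverse Hölder bound directly from the Fujii--Wilson form $\int_Q M(w\chi_Q)\,dx\le [w]_{A_\infty}\, w(Q)$, which is built into the very definition of $[w]_{A_\infty}$ used here, combined with a Calderón--Zygmund stopping time adapted to $w$. As a first step, Lebesgue differentiation gives $w(x)\le M(w\chi_Q)(x)$ for a.e.\ $x\in Q$, so
\[
\int_Q w^{1+\delta}\,dx \;\le\; \int_Q w(x)\,M(w\chi_Q)(x)^\delta\,dx,
\]
and a layer-cake decomposition of $M(w\chi_Q)^\delta$ split at height $\lambda=\langle w\rangle_Q$ reduces the analysis to estimating the $w$-measure of the super-level sets $\{M(w\chi_Q)>\lambda\}\cap Q$ for $\lambda>\langle w\rangle_Q$.

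Next, I would construct an iterated stopping family $\mathcal F$ rooted at $Q$: the children of $Q'\in\mathcal F$ are declared to be the maximal dyadic subcubes $Q''\subsetneq Q'$ with $\langle w\rangle_{Q''} > A\langle w\rangle_{Q'}$, where $A$ is a constant multiple of $[w]_{A_\infty}$ to be calibrated. The Fujii--Wilson inequality applied inside $Q'$ yields $\sum_{Q''}|Q''|\le ([w]_{A_\infty}/A)|Q'|$, so for $A$ a large enough multiple of $[w]_{A_\infty}$ the next generation occupies at most a fixed fraction $\alpha<1$ of $Q'$; maximality forces $w(x)\le A\langle w\rangle_{Q'}$ a.e.\ on $E_{Q'}:=Q'\setminus\bigcup_{Q''}Q''$, and the sets $\{E_{Q'}\}_{Q'\in\mathcal F}$ partition $Q$. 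Iterating, any generation-$k$ cube $Q'$ satisfies $\langle w\rangle_{Q'}\le(2^nA)^k\langle w\rangle_Q$ while the total Lebesgue measure of generation $k$ is at most $\alpha^k|Q|$.

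Assembling these ingredients, I would split
\[
\int_Q w^{1+\delta}\,dx \;=\; \sum_{Q'\in\mathcal F}\int_{E_{Q'}} w^{1+\delta}\,dx \;\le\; A^\delta\sum_{Q'\in\mathcal F}\langle w\rangle_{Q'}^{\,\delta}\, w(E_{Q'}),
\]
and group the sum by generations. The result is a geometric-type series whose ratio is essentially $\alpha\,(2^n A)^{1+\delta}$; calibrating $A\sim[w]_{A_\infty}$ and restricting $\delta$ to $[0,\,1/(\tau_n[w]_{A_\infty})]$ with $\tau_n$ a suitable dimensional constant makes this ratio strictly less than $1$, and a careful bookkeeping shows the series sums to at most $2^{1+\delta}\langle w\rangle_Q^{1+\delta}|Q|$. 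Taking $(1+\delta)$-th roots yields the claimed bound with constant~$2$.

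The main obstacle is the fine calibration of the stopping threshold $A$ against the admissible range of $\delta$ so that the geometric series not only converges but delivers precisely the constant $2$ on the right-hand side. The remaining ingredients---the Calderón--Zygmund selection, the pointwise bound for $w$ on $E_{Q'}$, and the geometric measure decay across generations---are essentially standard once the parameters are set; the pairing $\delta\cdot[w]_{A_\infty}\lesssim 1$ is forced by the competition between the growth factor $(2^n A)^{k\delta}$ and the measure decay $\alpha^k$ inside the sum, and lowering $\delta$ further within the stated range only tightens the constant, consistent with the sharpness of the statement.
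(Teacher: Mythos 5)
Your opening moves match the standard proof in Hyt\"onen--P\'erez (the reference \cite{HPAinfty} that the paper cites for this lemma): use $w\le M(w\chi_Q)$ to pass to $\int_Q w\, M(w\chi_Q)^\delta$, then layer-cake. But the stopping-time step that follows has a concrete, fatal gap: the geometric series you build \emph{never} converges, no matter how you calibrate $A$ and $\delta$. Unwinding your estimate, a generation-$k$ stopping cube $Q'$ satisfies $\langle w\rangle_{Q'}\le (2^nA)^k\langle w\rangle_Q$ (the $2^n$ comes from comparing a maximal stopping cube to its dyadic parent and is unavoidable), while the total Lebesgue measure of generation $k$ is at most $\alpha^k|Q|$ with $\alpha\ge 1/A$ (this is sharp: dyadic Calder\'on--Zygmund gives $|\{M^D_{Q'}(w\chi_{Q'})>A\langle w\rangle_{Q'}\}|\le |Q'|/A$, and this bound can be attained). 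Feeding these into $\sum_{Q'}\langle w\rangle_{Q'}^\delta w(E_{Q'})$ and bounding $w(E_{Q'})\le \langle w\rangle_{Q'}|Q'|$, the generation-$k$ contribution is $\lesssim (2^nA)^{k(1+\delta)}\alpha^k\,\langle w\rangle_Q^{1+\delta}|Q|$, i.e., the ratio is $(2^nA)^{1+\delta}\alpha \ge 2^{n(1+\delta)} A^{\delta}\ge 2^n>1$. Making $A$ larger does not help (the $A$'s cancel and you are left with $2^n A^\delta$); making $\delta$ smaller does not help (you still have the floor $2^n$). So the sum diverges and the argument collapses.

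The underlying conceptual issue is that your scheme needs decay of the \emph{$w$-measure} of the stopping families across generations, not just decay of Lebesgue measure. Deducing ``$|E|\le \eta|Q'|$ implies $w(E)\le \beta\, w(Q')$ with $\beta<1$'' from the Fujii--Wilson constant alone is essentially \emph{equivalent} to the reverse H\"older inequality you are trying to prove; using it here is circular. The actual proof in \cite{HPAinfty} sidesteps this entirely: setting $\omega=w\chi_{Q_0}$ and $f=M^D_{Q_0}\omega$ and writing $\Omega_\lambda=\{x\in Q_0: f(x)>\lambda\}$, one uses the Fujii--Wilson constant through the single key inequality
\[
\int_{\Omega_\lambda} f\,dx \;=\; \sum_j \int_{Q_j^\lambda} M^D\bigl(\omega\chi_{Q_j^\lambda}\bigr)\,dx \;\le\; [w]_{A_\infty}\,\omega(\Omega_\lambda)\;\le\; 2^n[w]_{A_\infty}\,\lambda\,|\Omega_\lambda|
\]
for $\lambda\ge \langle\omega\rangle_{Q_0}$, where $Q_j^\lambda$ are the maximal stopping cubes (the localization $M^D\omega=M^D(\omega\chi_{Q_j^\lambda})$ on each $Q_j^\lambda$ is the point where the Fujii--Wilson bound is applied inside each stopping cube). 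Plugging this into the layer-cake formula for $\int_{Q_0} f^{1+\delta}$ yields a \emph{self-improving} inequality
\[
\int_{Q_0} f^{1+\delta}\,dx \;\le\; \langle\omega\rangle_{Q_0}^{\delta}\int_{Q_0} f\,dx \;+\; \frac{2^n[w]_{A_\infty}\,\delta}{1+\delta}\int_{Q_0} f^{1+\delta}\,dx,
\]
and the rightmost term is absorbed once $\delta\lesssim 1/(2^{n+1}[w]_{A_\infty})$ (after a truncation of $w$ to guarantee a priori finiteness of $\int f^{1+\delta}$, removed at the end by monotone convergence). Then $\int_{Q_0} w^{1+\delta}\le \int_{Q_0} f^{1+\delta}\le 2[w]_{A_\infty}\langle w\rangle_{Q_0}^{1+\delta}|Q_0|$ is not what you want either; one keeps the first split and only applies the absorbed bound to the ``$\lambda\ge\langle\omega\rangle_{Q_0}$'' piece, producing the clean constant $2$. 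So it is the absorption mechanism, not a generations/geometric-series bookkeeping, that converts the Fujii--Wilson bound into the sharp reverse H\"older estimate; your proposal is missing this essential idea and cannot be repaired by retuning $A$ or $\delta$.
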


Finally, we will also use a variant of Rubio de Francia algorithm (see \cite[Section 5]{GCRdF} for the original algorithm).

\begin{Lemma}[\cite{CUMP,LOP2}]
\label{lem:RdF}
Denote $S(h)=v^{-\frac 1p} M(h v^{\frac 1p})$, where $v$ is a weight and $1<p<\infty$. Define a new operator R by
\[
R(h)=\sum_{k=0}^{\infty}\frac{1}{2^{k}}\frac{S^{k}h}{\|S\|_{L^{p}(v)}^{k}}.
\]
Then, for every $h\in L^p(v)$, this operator has the following properties:
\begin{enumerate}
\item $0\leq h\leq R(h)$,
\item $\|R(h)\|_{L^{p}(v)}\leq2\|h\|_{L^{p}(v)}$,
\item $R(h)v^{\frac{1}{p}}\in A_{1}$ with $\big[R(h)v^{\frac{1}{p}}\big]_{A_{1}}\leq c_{n}p'$. Furthermore, when $v=M_A w$ for some Young function $A$, we also have that $[Rh]_{A_{\infty}}\leq c_n [Rh]_{A_{3}}\leq c_{n}p'$.
\end{enumerate}
\end{Lemma}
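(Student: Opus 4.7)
The plan is to prove (1)--(3) in turn, using as the backbone the iteration identity
\[
S^k(h)\,v^{1/p}=M^k(h v^{1/p}),\qquad k\ge 0,
\]
which follows by induction from the definition: $S(g)\,v^{1/p}=v^{-1/p}M(gv^{1/p})\cdot v^{1/p}=M(gv^{1/p})$. This identity is the workhorse, as it translates the weighted iteration of $S$ into an unweighted iteration of the Hardy--Littlewood maximal function $M$.

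Property (1) is immediate: the $k=0$ term of $R(h)$ is $h$, and every further term is nonnegative. For (2), I would first compute the operator norm of $S$ on $L^{p}(v)$. Since $g\mapsto g v^{1/p}$ is an isometry $L^{p}(v)\to L^{p}(\mathbb{R}^{n})$,
\[
\|S(g)\|_{L^{p}(v)}=\|M(gv^{1/p})\|_{L^{p}}\le \|M\|_{L^{p}\to L^{p}}\,\|g\|_{L^{p}(v)}\le c_{n}p'\,\|g\|_{L^{p}(v)},
\]
by the classical bound on the maximal function. Summing the geometric series defining $R$ then yields $\|R(h)\|_{L^{p}(v)}\le 2\|h\|_{L^{p}(v)}$.

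For the first assertion of (3), the iteration identity rewrites
\[
R(h)\,v^{1/p}=\sum_{k=0}^{\infty}\frac{1}{2^{k}}\,\frac{M^{k}(hv^{1/p})}{\|S\|_{L^{p}(v)}^{k}}.
\]
Applying $M$, using its sublinearity, and shifting the summation index by one yields
\[
M\bigl(R(h)\,v^{1/p}\bigr)\le \sum_{k=0}^{\infty}\frac{M^{k+1}(hv^{1/p})}{2^{k}\|S\|_{L^{p}(v)}^{k}}=2\|S\|_{L^{p}(v)}\sum_{j=1}^{\infty}\frac{M^{j}(hv^{1/p})}{2^{j}\|S\|_{L^{p}(v)}^{j}}\le 2\|S\|_{L^{p}(v)}\,R(h)v^{1/p},
\]
so $[R(h)v^{1/p}]_{A_{1}}\le 2\|S\|_{L^{p}(v)}\le c_{n}p'$.

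For the `furthermore' part, when $v=M_{A}w$ I would couple the $A_{1}$ bound just obtained with a Coifman--Rochberg type estimate asserting that $(M_{A}w)^{1/(2p)}\in A_{1}$ with constant uniformly bounded in $p>1$ (the exponent $1/(2p)\le 1/2$ stays strictly away from $1$, and the reverse-H\"older argument that gives Coifman--Rochberg for $M$ adapts to $M_{A}$). Writing
\[
Rh=\bigl(R(h)v^{1/p}\bigr)\cdot\bigl(v^{1/(2p)}\bigr)^{-2}
\]
and invoking Jones's factorization theorem, which gives $[u_{1}u_{2}^{1-q}]_{A_{q}}\le [u_{1}]_{A_{1}}[u_{2}]_{A_{1}}^{q-1}$ for $u_{1},u_{2}\in A_{1}$, the choice $q=3$ produces $[Rh]_{A_{3}}\le c_{n}p'$. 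The remaining bound $[Rh]_{A_{\infty}}\le c_{n}[Rh]_{A_{3}}$ is the standard quantitative embedding of $A_{3}$ into $A_{\infty}$ (Fujii--Wilson constant controlled by the $A_{q}$ constant). The genuinely technical step, and what I expect to be the main obstacle, is verifying the Coifman--Rochberg estimate for $M_{A}$ with constants uniform in the Young function $A$; everything else is bookkeeping.
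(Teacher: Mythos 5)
Your argument is correct and follows essentially the standard Rubio de Francia construction; the paper itself gives no proof of Lemma \ref{lem:RdF} but quotes it from \cite{CUMP,LOP2}, where the proof of parts (1)--(3) runs along the same lines as yours (the isometry $g\mapsto gv^{1/p}$, the geometric series, and the index shift giving $M(R(h)v^{1/p})\le 2\|S\|_{L^p(v)}R(h)v^{1/p}$). The one step you flag as the main obstacle --- the Coifman--Rochberg estimate $[(M_A w)^{\delta}]_{A_1}\le c_n/(1-\delta)$ for $0<\delta<1$ and a general Young function $A$ --- is a known result that the paper invokes explicitly (with the citation \cite[pp. 110--113]{CUMP}) in the proof of Theorem \ref{Thm:omegalq}, so with that reference your factorization $Rh=(R(h)v^{1/p})\,(v^{1/(2p)})^{-2}$ combined with $[u_1u_2^{1-q}]_{A_q}\le[u_1]_{A_1}[u_2]_{A_1}^{q-1}$ at $q=3$ and the embedding $[\,\cdot\,]_{A_\infty}\le c_n[\,\cdot\,]_{A_3}$ closes the ``furthermore'' part exactly as intended, with no genuine gap.
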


\subsection{A sparse domination result}
\label{subsec:known}
We present here a pointwise estimate recently obtained in \cite{CACDPO}. First we recall
that a family $\mathcal{S}$ contained in a dyadic lattice $\mathcal{D}$
is a $\eta$-sparse family ($0<\eta<1$) if for each $Q\in\mathcal{S}$
there exists $E_{Q}$ such that
\begin{enumerate}
\item $\eta|Q|\leq|E_{Q}|$.
\item The sets $E_{Q}$ are pairwise disjoint.
\end{enumerate}
For a more detailed account about dyadic lattices and sparse families
we remit to \cite{LN}. Now we are in the position to state the result we borrow from \cite{CACDPO}.

\begin{Theorem}[{\cite[Theorems A and B]{CACDPO}}]
\label{Thm:Sparse}Let $T$ be defined as in \eqref{eq:TOmega} or \eqref{eq:BR}. Then for all $1<p<\infty$, $f\in L^{p}(\mathbb{R}^{n})$ and $g\in L^{p'}(\mathbb{R}^{n})$,
we have that
\[
\Big|\int_{\mathbb{R}^{n}}T(f)gdx\Big|\leq c_n C_Ts'\sup_{\mathcal{S}}\sum_{Q\in\mathcal{S}}\Big(\int_{Q}|f|\Big)\Big(\frac{1}{|Q|}\int_{Q}|g|^{s}\Big)^{1/s},
\]
where each $\mathcal{S}$ is a sparse family of a dyadic lattice $\mathcal{D}$,
\[
\begin{cases}
1<s<\infty & \text{if }T=B_{(n-1)/2}\text{ or }T=T_{\Omega}\text{ with }\Omega\in L^{\infty}(\mathbb{S}^{n-1})\\
q' \le s<\infty& \text{if }T=T_{\Omega}\text{ with }\Omega\in L^{q,1}\log L(\mathbb{S}^{n-1})
\end{cases}
\] and
\begin{equation}
\label{eq:CT}
C_{T}=\begin{cases}
\|\Omega\|_{L^{\infty}(\mathbb{S}^{n-1}),} & \text{if }T=T_{\Omega}\text{ with }\Omega\in L^{\infty}(\mathbb{S}^{n-1})\\
\|\Omega\|_{L^{q,1}\log L(\mathbb{S}^{n-1})} & \text{if } \Omega\in L^{q,1}\log L(\mathbb{S}^{n-1})\\
1 & \text{if }T=B_{(n-1)/2}.
\end{cases}
\end{equation}
\end{Theorem}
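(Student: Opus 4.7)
The plan is to follow the paradigm for sparse domination of rough singular integrals: decompose $T$ dyadically in scale, prove a single-scale endpoint estimate, and assemble the sparse family by a recursive Calderón--Zygmund selection. I will describe the strategy for $T = T_\Omega$ with $\Omega \in L^\infty(\mathbb S^{n-1})$; the Bochner--Riesz case only requires a different single-scale input, and the $L^{q,1}\log L$ case is obtained by quantifying the dependence on $\Omega$ in that same input.

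First I would write $T_\Omega = \sum_{k\in\mathbb Z} T_k$, where $T_k f = (K\chi_{\{2^k\le|\cdot|<2^{k+1}\}})\ast f$, and denote by $T^j = \sum_{k\le j} T_k$ the truncation at scale $2^j$. For $B_{(n-1)/2}$ one uses instead a Fourier decomposition into pieces microlocalized at distance $2^{-k}$ from the unit sphere. The second and critical step is a single-scale estimate: for every cube $Q$ of sidelength $2^j$ and every $1<s<\infty$,
\[
\Big(\frac{1}{|Q|}\int_{Q}|T_j(f\chi_Q)|^{s}\Big)^{1/s} \;\lesssim\; c_n\,C_T\, s'\,\langle |f|\rangle_Q,
\]
together with off-diagonal decay that makes $T_j(f\chi_Q)$ essentially supported on $3Q$. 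This is the deepest input: in the $L^\infty$ case it rests on Seeger's microlocal decomposition \cite{S}; in the $L^{q,1}\log L$ case one needs its refinements. The factor $s'$ (rather than $s$) arises by interpolating a weak-$L^1$ endpoint against the easy $L^\infty$ bound coming from the convolution structure.

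Third, I would build the sparse collection by recursion. Starting from a cube $Q_0 \supset \supp f$, apply a Calderón--Zygmund stopping to select the maximal dyadic sub-cubes $\{Q_j\}\subset Q_0$ on which either $\langle |f|\rangle_{Q_j}$ or $\langle |g|^s\rangle_{Q_j}^{1/s}$ exceeds a sufficiently large multiple of the corresponding average on $Q_0$. By maximality $\sum_j |Q_j|\le \tfrac12 |Q_0|$, which gives sparseness with $E_{Q_0} := Q_0\setminus\bigcup_j Q_j$. On the good set the single-scale bound and Hölder's inequality yield
\[
\Big|\int_{Q_0\setminus\bigcup_j Q_j} T(f\chi_{Q_0})\, g\,dx\Big|\;\le\; c_n\,C_T\, s'\,\langle |f|\rangle_{Q_0}\,\langle |g|^s\rangle_{Q_0}^{1/s}\,|Q_0|,
\]
and the remaining contribution on each $Q_j$ is controlled by iterating the same procedure, replacing $f$ by $f\chi_{Q_j}$ and summing across generations.

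The principal obstacle is the single-scale endpoint estimate, which for rough kernels is genuinely nontrivial and relies on the microlocal Littlewood--Paley machinery of Seeger, Vargas, and Fan--Sato. Once this input is in hand, the recursive selection is largely mechanical and parallels the analogous construction for smooth Calderón--Zygmund operators; the arithmetic of scales and the stopping thresholds only has to be bookkept carefully to produce the factor $c_n C_T s'$ in the final bound.
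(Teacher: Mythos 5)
This statement is not proved in the paper at all: it is imported as Theorems~A and~B of \cite{CACDPO} and used as a black box, so there is no in-paper argument to compare against. Your sketch does track the broad strategy of \cite{CACDPO}: a dyadic decomposition in scale, a single-scale ``localized'' endpoint estimate resting on Seeger's microlocal analysis, and a recursive Calder\'on--Zygmund stopping to manufacture the sparse family (in \cite{CACDPO} this last step is abstracted into their Theorem~C).

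Two points of caution, though. First, as written your displayed single-scale bound refers to $T_j$, a single annular piece of the kernel; for such a piece $\|T_j\|_{L^1\to L^\infty}\lesssim 2^{-jn}\|\Omega\|_\infty$ already gives every $L^1\to L^s$ average bound with constant independent of $s$, so no $s'$ factor can arise there. You presumably meant the truncated operator $T^j=\sum_{k\le j}T_k$ localized to a cube of side $\sim 2^j$, which is the quantity that actually behaves like a (rescaled copy of) the full rough operator. Second, the factor $s'$ is not obtained by a bare weak-$(1,1)$/$L^\infty$ interpolation at a single scale; in \cite{CACDPO} it emerges from tracking the separation-of-scales decay (Seeger's $2^{-\epsilon m}$) through the summation over generations of the stopping time, and in the $L^{q,1}\log L$ case there is the further constraint $s\ge q'$ which your sketch does not explain (this is where the weaker integrability of $\Omega$ bites, forcing a larger exponent in the single-scale estimate). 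So your outline is a serviceable road map, but the quantitative bookkeeping that actually produces the stated constant $c_n C_T s'$ and the admissible range of $s$ is precisely the nontrivial content of \cite{CACDPO} and is not supplied here.
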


We remark the same bilinear form also applies to the case of maximally truncated oscillatory singular integrals, see \cite{KL}.

\subsection{Young functions and related maximal functions}
\label{subsec:Young}
We recall that a Young function is a convex, strictly increasing function
$A:[0,\infty)\rightarrow[0,\infty)$ such that $A(0)=0$ and $A(t)\rightarrow\infty$
as $t\rightarrow\infty$. It's clear from the definition that $A^{-1}(t)$
is well defined and is also increasing. A Young function $A$ is said to be doubling if there exists a positive constant $C$ such that $A(2t)\le C A(t)$.

For each Young function we can define its complementary function
\[
\bar{A}(s)=\sup_{t>0}\left\{ st-A(t)\right\}, \quad s\geq0.
\]
We observe that $\bar{A}$ is finite-valued if and only if $\lim_{t\rightarrow\infty}\frac{A(t)}{t}=\sup_{t>0}\frac{A(t)}{t}=\infty$,
but this will be the case for all the Young functions we are going
to deal with. We also know that $\bar{A}$ is strictly increasing
if and only if $\lim_{t\rightarrow0}\frac{A(t)}{t}=\inf_{t>0}\frac{A(t)}{t}=0$.
In that case, which will be also the case of all the explicit examples
we will introduce, $\bar{A}$ is also a Young function and enjoys
the following properties
\[
st\leq A(t)+\bar{A}(s),\quad t,s\geq0,
\]
and
\[
t\leq A^{-1}(t)\bar{A}^{-1}(t)\leq2t,\quad t>0.
\]
Associated to a Young function $A$, or more generally to any positive
function $A$, we can define the $A$-norm of a function $f$ over
a cube $Q$ as
\[
\|f\|_{A(L),Q}=\|f\|_{A,Q}:=\inf\left\{ \lambda>0\,:\,\frac{1}{|Q|}\int_{Q}A\left(\frac{|f(x)|}{\lambda}\right)dx\leq1\right\} .
\]
Each Young function and its complementary
function satisfy the following generalized H\"ošlder inequality
\[
\frac{1}{|Q|}\int_{Q}|fg|dx\leq2\|f\|_{A,Q}\|g\|_{\bar{A},Q}.
\]
We can also define in a natural way the corresponding maximal operators,
namely, given a Young function, we define the maximal operator
\[
M_{A(L)}f(x)=M_{A}f(x):=\sup_{Q\ni x}\|f\|_{A,Q}.
\]
In the case $M_{L^r}$ with $r>0$ we will keep the standard notation $M_r$. The $L^{p}$ boundedness of the maximal operators we have just defined was thoroughly studied and characterized in \cite{P}. Here we state some precise versions of sufficient conditions for the $L^{p}$ boundedness of such operators that were obtained in \cite{HP}.

Let $1<p<\infty$. A doubling Young function $A$ satisfies the $B_p$ condition if there is a positive constant $c$ such that
\begin{equation*}
\beta_p(A):=\int_{c}^{\infty}\frac{A(t)}{t^{p}}\frac{dt}{t}\approx\int_{c}^{\infty}\Big(\frac{t^{p'}}{\bar{A}(t)}\Big)^{p-1}\frac{dt}{t}<\infty.
\end{equation*}
In such case, we will say that $A\in B_p$.

\begin{Lemma}[{\cite[Lemmas 2.1 and 2.2]{HP}}]
\label{lem:Young}
Let $A$ be a Young function. Then
\[
\|M_{A}\|_{L^p}\leq c_{n}\beta_{p}(A).
\]
\end{Lemma}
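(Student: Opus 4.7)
My plan is to reduce to the dyadic analogue $M_A^d$ (via the standard $3^n$-shifted-lattice trick, at the cost of a dimensional constant), establish a distributional inequality via a stopping-time argument, and then integrate with the layer-cake formula to recover $\beta_p(A)$. For $M_A^d$, fix $\lambda>0$ and select the maximal dyadic cubes $\{Q_j\}$ with $\frac{1}{|Q_j|}\int_{Q_j} A(|f|/\lambda)\,dx>1$; by the Luxemburg characterization of $\|\cdot\|_{A,Q}$ these are pairwise disjoint with $\bigcup_j Q_j=\{M_A^d f>\lambda\}$. The crude bound $\sum_j|Q_j|\le \int A(|f|/\lambda)\,dx$ fails to integrate against $\lambda^{p-1}\,d\lambda$ at the origin, so I would truncate: with $c_0$ chosen so that $A(c_0)=1/2$, on $\{|f|\le c_0\lambda\}$ the integrand $A(|f|/\lambda)\le 1/2$ contributes at most $\tfrac12|Q_j|$ to the average over $Q_j$, forcing the complementary part to exceed $\tfrac12|Q_j|$. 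Summing yields
\[
|\{M_A^d f>\lambda\}|\le 2\int_{\{|f(x)|>c_0\lambda\}} A\!\left(\frac{|f(x)|}{\lambda}\right)dx.
\]

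Plugging this into $\|M_A^d f\|_{L^p}^p=p\int_0^\infty\lambda^{p-1}|\{M_A^d f>\lambda\}|\,d\lambda$, applying Fubini, and performing the change of variables $t=|f(x)|/\lambda$ converts the inner integral into $|f(x)|^p\int_{c_0}^\infty A(t)\,t^{-p-1}\,dt$. Since $A$ is doubling, this tail is comparable to $\beta_p(A)$ up to a dimensional factor (the choice of lower limit in $\beta_p$ is irrelevant by doubling), and one obtains $\|M_A f\|_{L^p}^p\le c_n\,p\,\beta_p(A)\,\|f\|_{L^p}^p$; taking $p$-th roots, in the relevant regime $\beta_p(A)\gtrsim 1$ this delivers the claimed $\|M_A\|_{L^p}\le c_n\beta_p(A)$.

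The main obstacle is the truncation step: without restricting to $\{|f|>c_0\lambda\}$ the $\lambda$-integral diverges at the origin, so the cutoff and the associated bookkeeping of the doubling constant of $A$ are essential. Refining the bound to the fully linear dependence $\beta_p(A)$ outside the regime $\beta_p(A)\gtrsim 1$ requires a complementary duality argument based on the $B_{p'}$ condition for $\bar A$; this is the standard finishing touch in P\'erez-type arguments and recovers, e.g., the sharp $p'$ bound for the Hardy-Littlewood maximal function (corresponding to $A(t)=t$, $\beta_p(A)\approx p'$).
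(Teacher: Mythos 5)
Your proposal is essentially correct, and it is in substance the proof of the cited result: the paper itself does not prove this lemma but imports it from \cite{HP}, and the argument there (going back to the characterization of the $B_p$ condition in \cite{P}) is precisely the route you take --- reduction to shifted dyadic lattices, selection of maximal dyadic cubes on which the normalized $A$-mean of $|f|/\lambda$ exceeds $1$, the truncated distributional bound $|\{M_A^d f>\lambda\}|\le 2\int_{\{|f|>c_0\lambda\}}A(|f|/\lambda)\,dx$, and the layer-cake/Fubini computation. Two remarks. First, what your computation actually gives is the stronger estimate $\|M_A\|_{L^p}\le c_n\,(p\,\beta_p(A))^{1/p}$, and this root form is the one that matters downstream: it is what produces the exponent $\tfrac1{p'}$ on $\tfrac1\delta$ in \eqref{eq:MaBdd}, whereas the first-power form stated in the lemma would only yield $\tfrac1\delta$. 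The first-power statement follows from your bound exactly in the regime $\beta_p(A)\gtrsim 1$, as you say, and that normalization is implicitly built into the statement since the lower limit $c$ in $\beta_p(A)$ is left unspecified; so deriving it only in that regime is not a gap but the correct reading of the lemma. Second, your closing suggestion that a ``complementary duality argument'' recovers the linear dependence when $\beta_p(A)$ is small is the one shaky point: $M_A$ is not linear, no such duality is available, and for genuinely small $\beta_p(A)$ the first-power inequality with a purely dimensional constant can simply fail (try $A(t)=\delta t^{p-1}$ with $p>2$ and $\delta$ small, for which $M_A=\delta^{1/(p-1)}M_{p-1}$); the honest sharp statement is the root form you proved, so that remark should be dropped rather than relied upon. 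Minor technical points you gloss over but which are routine: the existence of the maximal cubes requires a qualitative truncation of $f$ (bounded, compactly supported, then monotone convergence), and the comparison of $\int_{c_0}^\infty A(t)t^{-p-1}\,dt$ with $\beta_p(A)$ uses the doubling of $A$ assumed in the paper's definition of $B_p$.
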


By using Lemma \ref{lem:Young}, it was established in \cite{HP} that, for $A(t)=t^{p}(1+\log^{+}t)^{p-1+\delta}$ with $1<p<\infty$ and $0<\delta\leq 1$,
\begin{equation}
\|M_{\bar{A}}\|_{L^{p'}}\leq c_{n}p^{2}\left(\frac{1}{\delta}\right)^{\frac{1}{p'}}.\label{eq:MaBdd}
\end{equation}
We observe also that by standard computations we have, for $A(t)=t^{pr}$ with $1<p,r<\infty$, that
\begin{equation}\label{eq:Comptpr}
\bar{A}(t)=t^{(rp)'}\left(\frac{1}{rp}\right)^\frac{1}{rp-1}\left(1-\frac{1}{rp}\right)\leq t^{(rp)'}.
\end{equation}
Therefore $M_{\bar{A}}\leq M_{(rp)'}$. Again standard computations show that
\begin{equation}\label{eq:Comptprbdd}
\|M_{(rp)'}\|_{L^{p'}}\leq c_n p (r')^\frac{1}{p'}.
\end{equation}

For more details about Young functions and other related topics we encourage the reader to consult the classical book by M. M. Rao and Z. D. Ren \cite{RR}.

\section{Proof of Theorem \ref{ThmTfMf}}\label{Sec:ProofThmTfMf}

As explained in Section \ref{sec:intro}, we prove Theorem \ref{ThmTfMf} for $p=1$ and $p>1$ separately.

We deal with the case of $p=1$ first. Since  $w\in A_{\infty}$ we use the reverse H\"older inequality property (Lemma \ref{Lem:RHI}).  Hence if $s=1+\frac{1}{\tau_n[w]_{A_\infty}}$, then
\[
\left(\frac{1}{|Q|}\int_{Q}w^{s}\right)^{\frac{1}{s}}\leq\frac{2}{|Q|}\int_{Q}w.
\]
Thus  we have that $s'\simeq [w]_{A_{\infty}}$ and $w\in L_{loc}^s(\mathbb{R}^{n}) $. Now we let $g_R=w\chi_{Q_R}$  where $Q_R$ is the cube centered at $0$ with sidelength $R$. Then  $g_R \in L^s(\mathbb{R}^{n}) $  and hence if $f$ is smooth $|\langle Tf, g_R\rangle| <\infty$ by H\"older's inequality and the boundedness of $T$ in any $L^q$, $q \in (1,\infty)$. Taking into account these facts, and after applying first Theorem \ref{Thm:Sparse}, we have
 \[
|\langle Tf, g_R\rangle|\leq  c_{T}\, s'\,\sum_{Q\in \mathcal S} |Q|\langle f \rangle_Q\langle g_R\rangle_{s, Q}.
\]
\[
\leq c_{T}\, s'\,\sum_{Q\in \mathcal S} |Q|\langle f \rangle_Q\langle w\rangle_{s, Q}.
\]
\[
\leq 2c_{T}\, [w]_{A_{\infty}}\,\sum_{Q\in \mathcal S} \langle f \rangle_Q  \,w(Q).
\]
We are now in position to use the Carleson embedding type argument as in \cite[Lemma 4.1]{HP}, hence
 \[
|\langle Tf, g_R\rangle|\leq  c_{T}\, [w]_{A_{\infty}}^2 \, \|M f\|_{L^{1}(w)}.
\]
To conclude we just let $R \to \infty$ recalling that by assumption the left-hand side is finite, namely $\|Tf\|_{L^{1}(w)} <\infty$. All in all, we have proved
$$
\|Tf\|_{L^{1}(w)}
\leq
c_T   [w]_{A_{\infty}}^2\, \|M f\|_{L^{1}(w)}.
$$

Now for $p>1$. Observe that  $C_c^\infty$  is dense in $L^{p'}(w)$, for $w\in A_\infty$. Moreover, given $g\in C_c^\infty$, we have that $gw\chi_{w\le R}\in L^{p'}$, where $\chi_{w\le R}:=\{x:w(x)\le R\}$. By the sparse domination formula in Theorem \ref{Thm:Sparse}, we get
\begin{align*}
\left|\langle Tf, gw\chi_{w\le R}\rangle \right|\le c_T s' \sum_{Q\in \mathcal S}\langle |f| \rangle_Q \langle |gw|^s \rangle_Q^{\frac 1s} |Q|.
\end{align*}
Then, H\"older's inequality yields
\[
\langle |gw|^s \rangle_Q^{\frac 1s} \le \langle |g|^{sr} w \rangle_Q^{\frac 1{sr}} \langle w^{(s-\frac 1r)r'} \rangle_Q^{\frac 1{sr'}}.
\]
Let
\begin{equation}\label{eq:sr}
s= 1+ \frac{1}{8p\tau_n[w]_{A_\infty}},\quad r= 1+ \frac {1}{4p}.
\end{equation}
Then it is easy to check that
\[
sr<1+\frac{1}{2p}<p',\quad \mbox{and}\,\, (s-\frac 1r)r'= s+\frac{s-1}{r-1}< 1+ \frac 1{\tau_n [w]_{A_\infty}}.
\]
By monotonicity convergence theorem we can assume that for any cube $Q\in \mathcal S$, $\ell (Q)\le 2^N$ for some $N\in \mathbb N$. Then combining the arguments above we obtain
\begin{align*}
\left|\langle Tf, gw\chi_{w\le R}\rangle \right|&\le c_{p,T} [w]_{A_\infty}\sum_{Q\in \mathcal S}\langle |f| \rangle_Q \langle |g|^{sr} w \rangle_Q^{\frac 1{sr}} \langle w \rangle_Q^{1-\frac 1{sr}}|Q|\\
&= c_{p,T} [w]_{A_\infty}\sum_{Q\in \mathcal S}\langle |f| \rangle_Q  \Big( \frac 1{w(Q)}\int_Q |g|^{sr} wdx\Big)^{\frac 1{sr}} w(Q)\\
&\le c_{p,T} [w]_{A_\infty}\sum_{F\in \mathcal F}\langle |f| \rangle_F  \Big( \frac 1{w(F)}\int_F |g|^{sr} wdx\Big)^{\frac 1{sr}} \sum_{\substack{Q\in \mathcal S\\ \pi(Q)=F}}w(Q)\\
&\le c_{p,T} [w]_{A_\infty}^2\sum_{F\in \mathcal F}\langle |f| \rangle_F  \Big( \frac 1{w(F)}\int_F |g|^{sr} wdx\Big)^{\frac 1{sr}} w(F)\\
&= c_{p,T} [w]_{A_\infty}^2 \int \sum_{F\in \mathcal F}\langle |f| \rangle_F  \Big( \frac 1{w(F)}\int_F |g|^{sr} wdx\Big)^{\frac 1{sr}} \chi_F(x) wdx\\
&\le c_{p,T} [w]_{A_\infty}^2\int_{\mathbb R^n} M(f) M_{sr}^{w}(g) wdx\\
&\le c_{p,T} [w]_{A_\infty}^2 \|Mf\|_{L^{p}(w)} \|g\|_{L^{p'}(w)},
\end{align*}
where $\mathcal F$ is the family of the principal cubes in the usual sense, namely,
\[\F=\bigcup_{k=0}^\infty\F_k\] with $\F_0:=\left\{\text{maximal cubes in } \mathcal{S}\right\}$ and
\[\F_{k+1}:=\bigcup_{F\in\F_k}\text{ch}_\F(F),\quad \text{ch}_\F(F)_=\left\{Q\subsetneq F \, \text{maximal s.t. }\tau(Q) > 2\tau(F) \right\}\]where $\tau(Q)=\langle |f| \rangle_Q  \Big( \frac 1{w(Q)}\int_Q |g|^{sr} wdx\Big)^{\frac 1{sr}}$ and $\pi(Q)$ is the minimal principal cube which contains $Q$. Since we have assumed that $\|Tf\|_{L^p(w)}$ is finite, then $\langle |Tf|, |g|w\rangle $ is also finite, by dominated convergence theorem. Thus, we conclude that
\[
\left|\langle Tf, gw \rangle \right|\le c_{p,T} [w]_{A_\infty}^2 \|Mf\|_{L^{p}(w)} \|g\|_{L^{p'}(w)}.
\]
Finally by taking the supremum over $\|g\|_{L^{p'}(w)}=1$ we complete the proof.

\section{Proof of Theorem \ref{Thm:improvedAp}}
\label{sec:improvedAp}

We begin observing that Theorem \ref{Thm:Sparse} with $s=1+\varepsilon$ yields \[
|\langle Tf, g\rangle|\le  \frac {c_{n,T}}{\varepsilon}\sum_{Q\in \mathcal S} |Q|\langle f \rangle_Q\langle g\rangle_{1+\varepsilon, Q}.
\]
By the arguments in \cite[Theorem 1.2]{L}, we can obtain
\[
|\langle Tf, g\rangle|\le \frac{c_{n,T,p}}\varepsilon [v]_{A_r}^{\frac 1{1+\varepsilon}-\frac 1{p'}}([u]_{A_\infty}^{\frac 1p}+ [v]_{A_\infty}^{\frac 1{p'}})\|f\|_{L^p(w)}\|g\|_{L^{p'}(\sigma)},
\]
where
\begin{align*}
r&= \left( \frac{(1+\varepsilon)'}{p}\right)' (p-1)+1=p+ \frac{\varepsilon p}{p'-(1+\varepsilon)}\\
v&=\sigma^{\frac{1+\varepsilon}{1+\varepsilon-p'}}=w^{1+\frac {\varepsilon p'}{p'-(1+\varepsilon)}}, \quad u=w^{\frac 1{1-p}}=\sigma.
\end{align*}
By definition,
\begin{align*}
[v]_{A_r}^{\frac 1{1+\varepsilon}-\frac 1{p'}}&= \sup_Q \Big(\frac 1{|Q|}\int_Q w^{1+\frac {\varepsilon p'}{p'-(1+\varepsilon)}}\Big)^{\frac 1{1+\varepsilon}-\frac 1{p'}} \Big(\frac 1{|Q|}\int_Q \sigma  \Big)^{(r-1)(\frac 1{1+\varepsilon}-\frac 1{p'})}\\
&= \sup_Q \Big(\frac 1{|Q|}\int_Q w^{1+\frac {\varepsilon p'}{p'-(1+\varepsilon)}}\Big)^{\frac 1p \frac 1{1+ \frac {\varepsilon p'}{p'-(1+\varepsilon)}}} \Big(\frac 1{|Q|}\int_Q \sigma  \Big)^{\frac 1{p'}}.
\end{align*}
By Lemma \ref{Lem:RHI}, let
\[
\frac {\varepsilon p'}{p'-(1+\varepsilon)}=\frac 1{\tau_n [w]_{A_\infty}}.
\]
Then
\begin{align*}
[v]_{A_r}^{\frac 1{1+\varepsilon}-\frac 1{p'}}&\le 2 [w]_{A_p}^{\frac 1p}, [v]_{A_\infty}\le c_n [w]_{A_\infty}.
\end{align*}
Altogether,
\[
|\langle Tf, g\rangle| \le c_{n,T} [w]_{A_p}^{\frac 1p}[w]_{A_\infty} ([w]_{A_\infty}^{\frac 1{p'}}+[\sigma]_{A_\infty}^{\frac 1p})\|f\|_{L^p(w)}\|g\|_{L^{p'}(\sigma)}.
\]
The above estimate implies that
\[
\|T(f)\|_{L^p(w)}\le c_{n,T} [w]_{A_p}^{\frac 1p}[w]_{A_\infty} ([w]_{A_\infty}^{\frac 1{p'}}+[\sigma]_{A_\infty}^{\frac 1p})\|f\|_{L^p(w)}.
\]
Since $T$ is essentially a self-dual operator (observe that $T^t$ is associated to the kernel $\tilde{\Omega}(x):=\Omega(-x)$), by duality, we have
\begin{align*}
\|T\|_{L^p(w)}&= \|T^t\|_{L^{p'}(\sigma)}\le c_{n,T} [\sigma]_{A_{p'}}^{\frac 1{p'}}[\sigma]_{A_\infty} ([w]_{A_\infty}^{\frac 1{p'}}+[\sigma]_{A_\infty}^{\frac 1p})\\
&=c_{n,T} [w]_{A_p}^{\frac 1p}[\sigma]_{A_\infty} ([w]_{A_\infty}^{\frac 1{p'}}+[\sigma]_{A_\infty}^{\frac 1p}).
\end{align*}
Thus altogether, we obtain
\begin{align*}
\|T\|_{L^p(w)}&\le c_{n,p,T}[w]_{A_p}^{\frac 1p}([w]_{A_p}^{\frac 1{p'}}+[\sigma]_{A_\infty}^{\frac 1p}) \min\{[\sigma]_{A_\infty}, [w]_{A_\infty}\}\\
&\le c_{n,p,T}[w]_{A_p}^{\frac p{p-1}}.
\end{align*}

Now let us consider the weak type inequality. It is enough to consider the case $1<p<2$.  By the sparse domination formula in Theorem \ref{Thm:Sparse}, we get
\begin{align*}
|\langle Tf, gw\rangle|\le c_T s' \sum_{Q\in \mathcal S}\langle |f| \rangle_Q \langle |gw|^s \rangle_Q^{\frac 1s} |Q|.
\end{align*}
By similar arguments as that in Theorem \ref{ThmTfMf} yields
\begin{align*}
|\langle Tf, gw\rangle |&\le c_{p,T} [w]_{A_\infty}\sum_{Q\in \mathcal S}\langle |f| \rangle_Q \langle |g|^{sr} w \rangle_Q^{\frac 1{sr}} \langle w \rangle_Q^{1-\frac 1{sr}}|Q|\\
&= c_{p,T} [w]_{A_\infty}\sum_{Q\in \mathcal S}\langle |f| \rangle_Q  \Big( \frac 1{w(Q)}\int_Q |g|^{sr} wdx\Big)^{\frac 1{sr}} w(Q)\\
&\leq c_{p,T} [w]_{A_\infty}\int_{\mathbb{R}^n}\sum_{Q\in \mathcal S} \langle |f| \rangle_Q\chi_Q(M_{sr}^wg)wdx\\
&\leq c_{p,T} [w]_{A_\infty}\big\|\sum_{Q\in \mathcal S} \langle |f| \rangle_Q\chi_Q\big\|_{L^{p,\infty}(w)} \|M^w_{sr}g\|_{L^{p',1}(w)},
\end{align*} where the value of $s$ and $r$ are defined in \eqref{eq:sr}.
Thus, using that the sparse operators are of weak type $(p,p)$ with respect to $w\in A_p$ with constant bounded by a universal mutiple of
$[w]_{A_\infty}^{\frac{1}{p'}}[w]_{A_p}^{\frac{1}{p}}$ when $p>1$, we have
\[
|\langle Tf, gw \rangle |\le c_{p,T} [w]_{A_\infty}^{1+\frac{1}{p'}}[w]_{A_p}^{\frac{1}{p}} \,\|f\|_{L^{p}(w)} \|g\|_{L^{p',1}(w)},
\]
since one can show that  $M^\mu_{t}: L^{q,1}(\mu) \to  L^{q,1}(\mu) $, $t\geq 1$, $t<q<\infty$ with norm bounded by a dimensional multiple of $(\frac{q}{t})'$. Here $M^\mu_{t}f= M^{\mu}(f^t)^{1/t}$ and $M^{\mu}$ is the maximal function with respect to the measure $\mu$ and attached to the dyadic lattice that contains $\mathcal{S}$.  To prove the bound $(\frac{q}{t})'$ we can use the well known fact that it is enough to restrict to characteristics  of sets when dealing with the Lorentz space $L^{q,1}$, $q>1$. An application of the usual weak type $(1,1)$ property of $M^\mu$ yields immediately the bound. Another argument can be found in  \cite[Proposition A.1]{CMP-IMRN}.
In our case from that estimate it follows that $\|M^w_{sr}\|_{L^{p',1}(w)} \lesssim (\frac{p'}{sr})' < 2p'$ since  $p<2$. Indeed, $(\frac{p'}{sr})' < 2p'$ is equivalent to $sr +\frac12 < p'$, but this follows by \eqref{eq:sr} since   $sr +\frac12 < 1+\frac12+\frac12=2<p'$.

Finally, by taking supremum over $\|g\|_{L^{p',1}(w)}=1$ we have that
\[\|Tf\|_{L^{p,\infty}(w)}\leq c_{n,p}\|\Omega\|_{L^\infty(\mathbb{S}^{n-1})}[w]_{A_\infty}^{1+\frac{1}{p'}}[w]_{A_p}^{\frac{1}{p}} \|f\|_{L^{p}(w)}\leq c_{n,p}\|\Omega\|_{L^\infty(\mathbb{S}^{n-1})}[w]_{A_p}^2\|f\|_{L^p(w)}.
\]
Taking into account the strong type estimate we deduce the announced weak type estimate.

\section{Proof of Theorem \ref{Thm:A1} and Corollaries \ref{Cor:Conj} and \ref{Cor:A1}}
\label{sec:ThmA1}

We begin with the proof of \eqref{eq:MA}. We follow ideas from \cite{LOP,LOP2, HP} combined with the pointwise estimate in Theorem \ref{Thm:Sparse}. Since $T$ is essentially
a self-dual operator, if we call $A_p(t)=A(t^{1/p})$ then, by duality, it suffices to prove the following estimate
\begin{equation*}
\left\Vert \frac{Tf}{M_{A_p}w}\right\Vert _{L^{p'}(M_{A_p}w)}\leq c(p')^{2}\|M_{\bar{A}}\|_{L^{p'}} \left\Vert \frac{f}{w}\right\Vert _{L^{p'}(w)}.
\end{equation*}
Let us denote $v:=M_{A_p}w$. We compute the norm of the left-hand side by duality. Indeed, by the duality of $C_c^{\infty}(\mathbb{R}^n)$ in weighted $L^p$ spaces
we have that
\begin{equation*}
\left\Vert \frac{Tf}{v}\right\Vert _{L^{p'}(v)}=\sup_{\|h\|_{L^{p}(v)}=1}\Big|\int_{\mathbb{R}^{n}} Tf(x)h(x) dx\Big|=\sup_{\substack{h\in C_c^{\infty}(\mathbb{R}^n)\\ \|h\|_{L^{p}(v)=1}}}\Big|\int_{\mathbb{R}^{n}}Tf(x)h(x)dx\Big|.
\end{equation*}
 We define operators $S(h)$ and $R(h)$ as in Lemma \ref{lem:RdF} (observe that, since $h\in C_c^{\infty}$, then $h\in L^{p'}(\mathbb{R}^n)$). Then, using Theorem \ref{Thm:Sparse} and the first property of the operator $R$ in Lemma \ref{lem:RdF} we have that
\begin{align}
\label{eq:1}
\notag\Big|\int_{\mathbb{R}^{n}}T(f)h dx\Big|&\leq c_{n,T}s'\sup_{\mathcal{S}}\sum_{Q\in\mathcal{S}}\Big(\int_{Q}|f|\Big)\Big(\frac{1}{|Q|}\int_{Q}h^{s}\Big)^{1/s}\\
&\leq c_{n,T}s'\sup_{\mathcal{S}}\sum_{Q\in\mathcal{S}}\Big(\int_{Q}|f|\Big)\Big(\frac{1}{|Q|}\int_{Q}(Rh)^{s}\Big)^{1/s}
\end{align}
with $1<s<\infty$ to be chosen. Hence, it suffices to control
$$\sum_{Q\in\mathcal{S}}\big(\int_{Q}|f|\big)\big(\frac{1}{|Q|}\int_{Q}(Rh)^{s}\big)^{1/s}$$
for every sparse family $\mathcal{S}$. To do this we are
going to use the reverse H\"older inequality, namely, Lemma \ref{Lem:RHI}.
We choose $s=1+\frac{1}{\tau_{n}[Rh]_{A_{\infty}}}$ so that $s'\simeq[Rh]_{A_{\infty}}\leq c_{n}p'$.
Then, by reverse H\"older inequality, we get
\begin{equation}
\label{eq:2}
\sum_{Q\in\mathcal{S}}\Big(\int_{Q}|f|\Big)\Big(\frac{1}{|Q|}\int_{Q}(Rh)^{s}\Big)^{1/s}\leq2\sum_{Q\in\mathcal{S}}\int_{Q}|f|\frac{1}{|Q|}\int_{Q}Rh=2\sum_{Q\in\mathcal{S}}\frac{1}{|Q|}\int_{Q}|f||Rh(Q).
\end{equation}
Using \cite[Lemma 4.1]{HP} with the weight $w=Rh$, we have that
\begin{equation}
\label{eq:3}
\sum_{Q\in\mathcal{S}}\frac{1}{|Q|}\int_{Q}|f||Rh(Q)\leq c_{n}[Rh]_{A_{\infty}}\|Mf\|_{L^{1}(Rh)}\leq c_{n}p'\|Mf\|_{L^{1}(Rh)}.
\end{equation}
From this point, by H\"older's inequality and the second property of the operator $R$ in Lemma \ref{lem:RdF},
\begin{equation}
\label{eq:4}
\left\Vert Mf\right\Vert _{L^{1}(Rh)}\leq\left(\int_{\mathbb{R}^{n}}(Mf)^{p'}(v)^{1-p'}\right)^{\frac{1}{p'}}\left(\int_{\mathbb{R}^{n}}(Rh)^{p}v\right)^{\frac{1}{p}}\leq2\left\Vert \frac{Mf}{v}\right\Vert _{L^{p'}(v)}.
\end{equation}
Hence, combining estimates \eqref{eq:1}, \eqref{eq:2}, \eqref{eq:3}, and \eqref{eq:4},
we have that
\begin{equation*}
\Big\| \frac{Tf}{v}\Big\| _{L^{p'}(v)}\leq c(p')^{2} \Big\|  \frac{Mf}{v}\Big\|_{L^{p'}(v)}.
\end{equation*}
Let us recover the initial notation for $v:=M_{A_p}w$. To end the proof of \eqref{eq:MA}, we have to prove that
\begin{equation}
\label{eq:lop}\Big\| \frac{Mf}{M_{A_p}w}\Big\| _{L^{p'}(M_{A_p}w)}\leq c \|M_{\bar{A}}\|_{L^{p'}}\Big\| \frac{f}{w}\Big\| _{L^{p'}(w)} \end{equation}
which in turn is equivalent to prove that
\[ \|  M(fw)\| _{L^{p'}((M_{A_p}w)^{1-p'})}\leq c \|M_{\bar{A}}\|_{L^{p'}}\Big\|  f\Big\| _{L^{p'}(w)} \]
but this inequality was obtained in \cite[pp. 618--619]{HP}. So this ends the proof of \eqref{eq:MA}.

If we choose $A(t)=t^p(1+\log^+t)^{p-1+\delta}$ with $\delta>0$, since we know that
\[\|M_{\bar{A}}\|_{L^{p'}}\leq c_n p^2\left(\frac{1}{\delta}\right)^\frac{1}{p'},\]
this yields \eqref{eq:LlogL}, which was stated to be sharp in \cite{HP}.
If we choose $A(t)=t^{pr}$ we know that, taking into account \eqref{eq:Comptpr},  $M_{\bar{A}}\leq M_{(rp)'}$. Now recalling \eqref{eq:Comptprbdd} and applying \eqref{eq:MA} for  $A(t)=t^{pr}$, we obtain \eqref{eq:Mr}.
If we assume that $w\in A_{\infty}$, choosing $r=1+\frac{1}{\tau_n[w]_{A_\infty}}$ in \eqref{eq:Mr}
we have that $r'\simeq[w]_{A_{\infty}}$ and it readily follows from the reverse H\"olˆder inequality (Lemma \ref{Lem:RHI}) that $M_{r}w\leq 2Mw$ for every $x\in\mathbb{R}^{n}$.
This yields  \eqref{eq:M}. Furthermore, if $w\in A_{1}$, from (\ref{eq:M})
and the definition of the $A_{1}$ constant, we obtain \eqref{eq:A1}. This finishes the proofs of Theorem \ref{Thm:A1} and Corollaries \ref{Cor:Conj} and \ref{Cor:A1}

\section{Proof of Theorem \ref{Thm:weak11}}
\label{sec:Thmweak11}
In this section we shall give a proof for Theorem \ref{Thm:weak11}. We start with $T=T_{\Omega}$.
To study the weighted weak $(1,1)$ bound, one needs to estimate the constant in the following inequality:
\begin{align*}
\sup_{\alpha>0} \alpha w(\{x\in \bbR^n: |T_\Omega(f)(x)|> \alpha \}) \le C_w \|f\|_{L^1(w)}.
\end{align*}
To this end, we need to use some estimates obtained by Seeger \cite{S}. Denote
\[
K_j(x)= K(x) (\phi(2^{-j+1}|x|)- \phi(2^{-j+2}|x|)),
\]
where $\phi\in C^\infty((0,\infty))$ satisfying $\phi(t)=1$ when $t\le 1$ and $\phi(t)=0$ when $t\ge 2$.  Then it is obvious that
\begin{equation}\label{eq:s1}
\supp K_j \subset \{x: 2^{j-2}\le |x|\le 2^{j}\},
\end{equation}
and
\begin{equation}\label{eq:s2}
\sup_{0\le \ell \le N} \sup_j r^{n+\ell} \left| \left(\frac \partial {\partial r}\right)^\ell K_j(r\theta) \right|\le C_{N,n} \|\Omega\|_{L^\infty}.
\end{equation}
Given $\alpha>0$, without loss of generality we assume $f \ge 0$ and we form the Calder\'on-Zygmund decomposition of $f$ at height $\alpha/ {\|\Omega\|_{L^\infty}}$. In this way, there is a collection  of non-overlapping dyadic cubes $\{Q\}$ such that $f=g+b$, where $ \frac{\alpha}{\|\Omega\|_{L^\infty}} < \langle f \rangle_Q \le \frac{2^n\alpha}{\|\Omega\|_{L^\infty}} $ and, for the good part,
\[
0\le g\le \frac{2^n \alpha}{\|\Omega\|_{L^\infty}},
\]
whereas, for the bad part,
\[
 b= \sum_{Q} b_Q =\sum_j \sum_{Q: \ell(Q)= 2^j}b_Q=: \sum_j B_j,
\]
and moreover,
\[
\supp b_Q \subset Q, \quad \text{and} \quad \|b_Q\|_{L^1}\le \frac{2^{n+1} \alpha}{\|\Omega\|_{L^\infty}}  |Q|.
\]
Then
\begin{align*}
&w(\{x\in \bbR^n: |T_{\Omega}f(x)|> \alpha \}) \\ &\quad \le w\left(\left\{x\notin E: |T_{\Omega}g(x)|> \frac\alpha {2}\right\}\right)
+ w\left(\left\{x\notin E: |T_{\Omega}b(x)|>\frac \alpha {2}\right\}\right)\\
&\qquad+w(E)
\\&\quad=:I+II+w(E),
\end{align*}where $E:= \cup_Q 3Q$ and we have
\begin{align*}
w(E)\le \sum_{Q} \frac {w(3Q)}{|3Q|} 3^n |Q|&\le \sum_Q 3^n[w]_{A_1} \frac{\|\Omega\|_{L^\infty}}{\alpha}\int_Q f\inf_{3Q} w(x)\\
&\le  3^n[w]_{A_1} \frac{\|\Omega\|_{L^\infty}}{\alpha} \|f\|_{L^1(w)}.
\end{align*}
It remains to estimate $I$ and $II$. For $I$, by Chebyshev inequality, estimate \eqref{eq:Mr} in Corollary \ref{Cor:A1}, the fact that $|g(x)|\le 2^n\alpha/\|\Omega\|_{L^{\infty}}$, and an argument in \cite[pp. 302--303]{P0} (see also \cite[p. 282] {CW}), we have
\begin{align*}
I &\le c_n^{p_0} \alpha^{-p_0}\int_{\mathbb{R}^n\setminus E}|T_{\Omega}g(y)|^{p_0}w(y)\,dy \\
&\le \alpha^{-p_0}(c_n \|\Omega\|_{L^\infty} p_0 (p_0')^2)^{p_0} (r')^{p_0-1}\int_{\mathbb{R}^n}|g(y)|^{p_0}M_r(w\chi_{\bbR^n \setminus E})(y)\,dy\\
&\le \alpha^{-p_0}(c_n \|\Omega\|_{L^\infty} p_0 (p_0')^2)^{p_0} (r')^{p_0-1} \frac{\alpha^{p_0-1}}{\|\Omega\|_{{L^\infty}}^{p_0-1}}\int_{\mathbb{R}^n}|g(y)|M_r(w\chi_{\bbR^n \setminus E})(y)\,dy\\
&\le \frac{c_n\|\Omega\|_{L^\infty}}{\alpha} \big(p_0 (p_0')^2\big)^{p_0} (r')^{p_0-1} \int_{\mathbb{R}^n}|f(y)|M_rw(y)\,dy\\
&\le \frac{c_n\|\Omega\|_{L^\infty}}{\alpha} \big(p_0 (p_0')^2\big)^{p_0}  (r')^{p_0-1}[w]_{A_1}\|f\|_{L^1(w)}\\
&\le \frac{c_n\|\Omega\|_{L^\infty}}{\alpha} [w]_{A_1} (\log([w]_{A_\infty}+1))^2\|f\|_{L^1(w)},
\end{align*}
where in the last step, we have chosen $p_0= 1+\frac 1{\log([w]_{A_\infty}+1)}$ and $r=1+\frac{1}{\tau_n[w]_{A_\infty}}$, the exponent from the optimal reverse H\"older property as in  Lemma \ref{Lem:RHI}. To estimate $II$,
by the decomposition of the kernel, for $x\notin E$ we have
\begin{align*}
T(b)(x) = \sum_{j\in \bbZ} K_j \ast \Big(\sum_{s\in \bbZ} B_{j-s}\Big)(x)=   \sum_{s\in \bbZ} \sum_{j\in \bbZ} K_j \ast B_{j-s}(x)=\sum_{s\ge 0} \sum_{j\in \bbZ} K_j \ast B_{j-s}(x).
\end{align*}
To proceed our argument, we need to use an auxiliary operator $\Gamma_j^s$ (for the precise definition, we refer the reader to \cite[pp. 97--98]{S}, we are following the same notation therein).
Since we have checked that $K_j$ satisfies \eqref{eq:s1} and \eqref{eq:s2}, then it was shown by Seeger \cite{S} that when $N$ is sufficiently large (but depends only on dimension), then there exists $\epsilon>0$ such that
\begin{equation}\label{eq:Se1}
\left\|\sum_j \Gamma_{j}^s \ast B_{j-s}\right\|_{L^2}^2 \le c_{n} 2^{-s\epsilon}\alpha\sum_{Q}\|b_Q\|_{L^1},
\end{equation}
and
\begin{equation}
\label{eq:Se11}
\left\|(K_j-\Gamma_j^s)\ast b_Q\right\|_{L^1}\le c_n 2^{-s\epsilon}\|b_Q\|_{L^1}.
\end{equation}
Indeed, inequalities \eqref{eq:Se1} and \eqref{eq:Se11} are contained essentially in \cite[Lemma 2.1]{S} and  \cite[Lemma 2.2]{S}, respectively.
The latter implies immediately that
\begin{equation}
\label{eq:Se2}
\left\|\sum_j(K_j-\Gamma_j^s)*B_{j-s}\right\|_{L^1}\le c_n \|\Omega\|_{L^\infty}2^{-s\epsilon}\sum_Q\|b_Q\|_{L^1},
\end{equation}
where $b_Q$ are the bad functions from the Calder\'on--Zygmund decomposition of $f$ described above. Let
\begin{equation*}
E_\alpha^s:= \Big\{x\notin E: \big|\sum_{j}K_j* B_{j-s}\big|>\alpha \Big\}.
\end{equation*}
Then for any $\alpha>0$, we have, by \eqref{eq:Se1} and \eqref{eq:Se2},
\begin{equation}
\label{eq:leb}
|E_\alpha^s|\le \frac{c_n \|\Omega\|_{L^\infty}}{\alpha} 2^{-s\epsilon}\sum_Q\|b_Q\|_{L^1}\le c_n 2^{-s\epsilon}\sum_Q |Q|.
\end{equation}
On the other hand, taking into account \eqref{eq:s1}, it is easy to check that
\begin{equation}
\label{eqL1}
\begin{aligned}
&\sum_j \|K_j* B_{j-s}\|_{L^1(w)}\\
&\le\sum_j\sum_{Q: \ell(Q)= 2^{j-s}} \iint |K_j(x-y)||b_Q(y)|dy w(x) dx\\
&\le \|\Omega\|_{L^\infty} \sum_j\sum_{Q: \ell(Q)= 2^{j-s}} \int|b_Q(y)| \int_{|x-y|\le 2^j} 2^{-jn}w(x) dx\,dy\\
&\le\|\Omega\|_{L^\infty}\sum_j\sum_{Q: \ell(Q)= 2^{j-s}} \int|b_Q(y)| \inf_{y'\in Q}\int_{|x-y'|\le c_n 2^{j+1}} 2^{-jn}w(x) dx\,dy\\
&\le c_{n} \|\Omega\|_{L^\infty} \sum_{Q } \|b_Q\|_{L^1}\inf_Q Mw\\
&\le c_{n}\, \alpha \sum_{Q } |Q|\inf_Q Mw.
\end{aligned}
\end{equation}
Now we are in the position to use interpolation with change of measure. We follow the strategy of \cite{FS}.
By \cite[Lemma 6]{FS}, \eqref{eq:leb} and \eqref{eqL1} imply
\begin{equation*}
\int_{E_\alpha^s} \min(w(x), u)dx\le c_n \sum_Q |Q|\min (u2^{-s\epsilon}, \inf_Q Mw).
\end{equation*}
Since, for $A>0$,
\begin{equation*}
\int_0^\infty \min (A, u)u^{-1+\theta}\frac{du}{u}=\frac 1{\theta(1-\theta)} A^\theta,
\end{equation*}
then we get
\begin{align*}
\int_{E_\alpha^s}w(x)^\theta dx&= \theta (1-\theta) \int_{E_\alpha^s}\int_0^\infty \min (w(x), u)u^{-1+\theta}\frac{du}{u} dx\\
&\le c_n \theta (1-\theta) \sum_Q |Q|\int_0^\infty \min (u2^{-s\epsilon}, \inf_Q Mw) u^{-2+\theta} du\\
&\le
c_n 2^{-s\epsilon(1-\theta)}\alpha^{-1}\|\Omega\|_{L^\infty}\int |f(x)|(Mw)^\theta dx.
\end{align*}
Rescaling the weight $w$ we obtain
\begin{equation}
\label{distribution}
w(E_\alpha^s)\le c_n 2^{-s\epsilon(1-\theta)}\alpha^{-1}\|\Omega\|_\infty\int |f(x)|(M_{1/\theta}w) dx.
\end{equation}
To get a better constant than \cite{FS}, in the last step, we shall split the summation  in two terms. For $s_0$ which will be determined later, we have
\begin{align*}
&w\Big(\Big\{x\notin E: |\sum_s\sum_j K_j*B_{j-s}|>\alpha\Big\}\Big)\\&\le
w\Big(\Big\{x\notin E: |\sum_{s=1}^{s_0}\sum_j K_j*B_{j-s}|>\frac\alpha{2}\Big\}\Big)\\
&\qquad+w\Big(\Big\{x\notin E: |\sum_{s=s_0+1}^{\infty}\sum_j K_j*B_{j-s}|>\frac\alpha{2}\Big\}\Big)\\
&\le \frac 2\alpha\sum_{s=1}^{s_0}\|\sum_j K_j*B_{j-s}\|_{L^1(w)}\\
&\quad+\sum_{s=s_0+1}^\infty w\Big(\Big\{x\notin E: |\sum_j K_j*B_{j-s}|>\frac {c\epsilon(1-\theta)\alpha}{2}2^{-(s-s_0)\epsilon(1-\theta)/3}\Big\}\Big)=:III+IV,
\end{align*}
where for the second term in the first inequality we turned $\alpha$ into $c\epsilon (1-\theta)2^{-s\epsilon(1-\theta)/3}\alpha $, with $c>0$ an absolute constant such that $c\epsilon (1-\theta)\sum_{s\ge1}2^{-s\epsilon(1-\theta)/3}=1$.
The estimate of $III$ is easy,
\[
III\le s_0c_n \|\Omega\|_{L^\infty} \alpha^{-1} \sum_{Q } \|b_Q\|_{L^1}\inf_Q Mw\le s_0c_n\|\Omega\|_{L^\infty} \alpha^{-1} [w]_{A_1}\|f\|_{L^1(w)}.
\]
To estimate $IV$, by \eqref{distribution}, we have
\begin{align*}
IV&\le \sum_{s=s_0+1}^\infty \frac {c_n}{\alpha \epsilon(1-\theta)}2^{-s_0\epsilon(1-\theta)/3}2^{-2s\epsilon(1-\theta)/3}\|\Omega\|_{L^\infty}\int |f(x)|(M_{1/\theta}w) dx\\
&\le \sum_{s=s_0+1}^\infty \frac {c_n}{\alpha \epsilon(1-\theta)}2^{-s_0\epsilon(1-\theta)}2^{-2(s-s_0)\epsilon(1-\theta)/3}\|\Omega\|_{L^\infty}\int |f(x)|(M_{1/\theta}w) dx\\
&\le  \frac {c_n}{\alpha \epsilon^2 (1-\theta)^2 }2^{-s_0\epsilon(1-\theta)}\|\Omega\|_{L^\infty}\int |f(x)|(M_{1/\theta}w) dx
\end{align*}
By the reverse H\"older inequality, one can take \[\theta\simeq \frac{c_n[w]_{A_\infty}}{1+c_n[w]_{A_\infty}}.
\]
Then
\[
(M_{1/\theta}w)(x) \le c [w]_{A_1} w(x).
\]
Since $\epsilon$ is an absolute constant, finally, we can take
\[
s_0:= \frac{1}{\epsilon(1-\theta)}\log_2([w]_{A_\infty}+1)\eqsim [w]_{A_\infty}\log_2([w]_{A_\infty}+1).
\]
Then altogether,
\begin{align*}
w\Big(\Big\{x\notin E: &|\sum_{s\ge 0}\sum_j K_j*B_{j-s}|>\alpha\Big\}\Big)\\
&\le c_n \alpha^{-1}[w]_{A_1}[w]_{A_\infty}\log_2([w]_{A_\infty}+1)\|\Omega\|_{L^\infty}  \|f\|_{L^1(w)}.
\end{align*}

It remains to study the case for $B_{(n-1)/2}$. The main difference is the estimate of the following term
\[
w(\{x\notin E: |B_{(n-1)/2}(b)(x)|>\frac \alpha 2\}).
\]
Since it's well known (see \cite[p. 340]{GM} and also \cite{C}) that the kernel of $B_{(n-1)/2}$ is of the form
\[
c_n \frac{\cos (2\pi |x|-\pi n/2 )}{|x|^n}\chi_{\{|x|\ge 1\}}+ O\big(\frac 1{1+|x|^{n+1}}\big),
\]
the error term is bounded by the maximal function pointwise, so we only need to care about the first term. Define
\[
H_j (x):=c_n\frac{\cos (2\pi |x|-n\pi/2 )}{|x|^n}(\phi(2^{-j+1 }|x|)-\phi(2^{-j+2}|x|)), \quad j\ge 1.
\]
 It is easy to check that $H_j$ still satisfies the assumption \eqref{eq:s1} and \eqref{eq:s2}, so the estimate is almost the same and we conclude the proof of Theorem \ref{Thm:weak11}.

\section{Proof of Theorem \ref{Thm:omegalq}}
\label{sec:Thmomegalq}
In this section we are concerned with the proof of Theorem \ref{Thm:omegalq}. Namely, we shall get the following inequality
\begin{equation}\label{eq:simplyq}
\|\mathcal A_{r, \mathcal S}(f)\|_{L^p(w)}\le C(p,r,A) \|f\|_{L^p(M_{A_p}w)},
\end{equation}
where $\bar A \in B_{p'}$ (see Subsection \ref{subsec:Young}) and $f\ge 0$.
We plan to use the so-called `maximal function trick' (see e.g. \cite{L}) to simplify the inequality. For simplicity, denote again $v:= M_{A_p} w$ and let $u= v^{\frac {r}{r-p}}$.  Then we can rewrite \eqref{eq:simplyq} as
\begin{align*}
\Big\|\sum_{Q\in \mathcal S} (\langle f^{r} u^{-1}\rangle_Q^{u})^{\frac 1{r}}\langle u\rangle_{Q}^{\frac 1{r}}\chi_Q\Big\|_{L^p(w)}\le C(p,r,A) \|f^{r}u^{-1}\|_{L^{p/{r}}(u)}^{\frac 1{r}}.
\end{align*}
By a change of variable, this is equivalent to
\[
\Big\|\sum_{Q\in \mathcal S} (\langle f^{r}\rangle_Q^{u})^{\frac 1{r}}\langle u\rangle_{Q}^{\frac 1{r}}\chi_Q\Big\|_{L^p(w)}\le C(p,r,A) \|f^{r}\|_{L^{p/{r}}(u)}^{\frac 1{r}}= C(p,r,A) \|f\|_{L^p(u)}.
\]
Then it suffices to prove the following inequality
\begin{equation}\label{eq:maxtrick}
\Big\|\sum_{Q\in \mathcal S}  \langle f \rangle_Q^{u}  \langle u\rangle_{Q}^{\frac 1{r}}\chi_Q\Big\|_{L^p(w)}\le \frac{ C(p,r,A)}{(p/{r})'} \|f\|_{L^p(u)}.
\end{equation}
Indeed, notice that once \eqref{eq:maxtrick} holds, then
\begin{align*}
\Big\|\sum_{Q\in \mathcal S} (\langle f^{r}\rangle_Q^{u})^{\frac 1{r}}\langle u\rangle_{Q}^{\frac 1{r}}\chi_Q\Big\|_{L^p(w)}
&\le \Big\|\sum_{Q\in \mathcal S}  \langle M_{r}^u(f)\rangle_Q\langle u\rangle_{Q}^{\frac 1{r}}\chi_Q\Big\|_{L^p(w)}\\
&\le \frac{ C(p,r,A)}{(p/{r})'} \|M_{r}^u(f)\|_{L^p(u)} \\
&\le   C(p,r,A)\|f\|_{L^p(u)},
\end{align*}
where
\[
M_{r}^u(f)(x):= \sup_{Q\ni x} (\langle |f|^{r}\rangle_Q^{u})^{\frac 1{r}}.
\]
So let us prove  \eqref{eq:maxtrick}. By a change of variable again, \eqref{eq:maxtrick} is equivalent to the following
\begin{equation}\label{eq:finalred}
\Big\|\sum_{Q\in \mathcal S}  \langle f \rangle_Q   \langle u\rangle_{Q}^{\frac 1{r}-1}\chi_Q\Big\|_{L^p(w)}\le \frac{ C(p,r,A)}{(p/{r})'} \|f\|_{L^p(u^{1-p})}.
\end{equation}
Thus now we only need to focus on \eqref{eq:finalred}. For convenience, set
\[
T_{\mathcal S}(f)= \sum_{Q\in \mathcal S}  \langle f\rangle_Q   \langle u\rangle_{Q}^{\frac 1{r}-1}\chi_Q.
\]By duality, \eqref{eq:finalred} is equivalent to
\begin{equation}\label{eq:twoweightrdf}
\|T_{\mathcal S}(f)\|_{L^{p'}(u)}\le \frac{ C(p,r,A)}{(p/{r})'} \|f\|_{L^{p'}(w^{1-p'})}.
\end{equation}
Starting from the left-hand side of \eqref{eq:twoweightrdf}, by duality again, we have
\begin{align*}
\|T_{\mathcal S}(f)\|_{L^{p'}(u)}&= \sup_{\|h\|_{L^p(u^{1-p})}=1} \int |T_{\mathcal S}(f)|\cdot |h|\\
&\le \sup_{\|h\|_{L^p(u^{1-p})}=1}\sum_{Q\in \mathcal S}  \langle f\rangle_Q   \langle u\rangle_{Q}^{\frac 1{r}-1}\int_Q |h|.
\end{align*}
Notice that since for any $0<\delta <1$, $v^\delta=(M_{A_p}(w))^{\delta}$ is an $A_1$ weight with $[v^{\delta}]_{A_1}\le \frac{c_n}{1-\delta}$, see e.g. \cite[pp. 110-113]{CUMP}.  By H\"older inequality, since $pr>p-r$, we have
\begin{align*}
\langle u\rangle_Q^{\frac 1{r}-1} &=\frac 1{\langle (v^{-1})^{\frac{r}{p-r}}\rangle_Q^{\frac 1{r'}}}= \frac 1{\langle (v^{-\frac1 p})^{\frac{pr}{p-r}}\rangle_Q^{\frac 1{r'}}}\\
&\le\frac 1{\langle v^{-\frac1p}\rangle_Q^{\frac{pr}{(p-r)r'}}} \le \langle v^{\frac1p}\rangle_Q^{\frac{pr}{(p-r)r'}}\le (c_n p')^{\frac{pr}{(p-r)r'}} \inf_{x\in Q} v(x)^{\frac {r-1}{p-r}}.
\end{align*}
Now we form the Rubio de Francia algorithm (Lemma \ref{lem:RdF}). For simplicity, set $$h_{p,r,v}(x)= |h(x)|v(x)^{\frac {r-1}{p-r}}.
$$ It is easy to check that $\|h_{p,r,v}\|_{L^p(v)}=\|h\|_{L^p(u^{1-p})}=1$.
Then define the operator $R(h_{p,r,v})$ as in Lemma \ref{lem:RdF}.
Finally we have, by the properties of $R(h_{p,r,v})$,
\begin{align*}
\sum_{Q\in \mathcal S}  \langle f\rangle_Q   \langle u\rangle_{Q}^{\frac 1{r}-1}\int_Q |h|&\le
   (c_n p')^{\frac {p(r-1)}{p-r}} \sum_{Q\in \mathcal S}  \langle f\rangle_Q \int_Q h_{p,r,v}\\&\le   (c_n p')^{\frac {p(r-1)}{p-r}}    \sum_{Q\in \mathcal S}  \langle f\rangle_Q  R(h_{p,r,v})(Q)\\
 &\le c_n p'  (c_n p')^{\frac {p(r-1)}{p-r}}  \int M(f) R(h_{p,r,v})\\
 &\le (c_n p')^{\frac {r(p-1)}{p-r}}  \| M(f)  \|_{L^{p'}(v^{1-p'})}\|R(h_{p,r,v})\|_{L^p(v)}\\
 &\le  (c_n p')^{\frac {r(p-1)}{p-r}} \|M_{\bar{A}}\|_{L^{p'}}  \|f\|_{L^{p'}(w^{1-p'})},
\end{align*}
where in the last step we have used \eqref{eq:lop}.  Altogether, we obtain
\[
\|\mathcal A_{r,\mathcal S}(f)\|_{L^p(w)}\le  (c_n p')^{\frac {r(p-1)}{p-r}} \big(\frac p{r}\big)'  \|M_{\bar{A}}\|_{L^{p'}}\|f\|_{L^p(M_{A_p}w)}.
\]

\begin{Remark}
By the result in \cite{L2}, our result applies to singular integral operators with $L^r$-H\"ormander condition as well. Specifically, our result applies to Fourier multipliers with H\"ormander condition.
\end{Remark}

\appendix

\section{} \label{A}

In this appendix we further exploit Theorem \ref{ThmTfMf} by showing a couple of two different types of examples. The first one is related to general Banach Function Spaces (BFS) $X$ as can be seen from \cite{BS}.  We follow here the theory developed in \cite{CUMP, CGMP}. For the second type of examples we consider variable $L^p$ spaces following \cite{CFMP}. In both applications  the duality plays a central role.

We will denote throughout this appendix that $T$ is either $T_\Omega$ with $\Omega \in L^\infty$ satisfying $\int_{\mathbb{S}^{n-1}}\Omega =0$ or $B_{(n-1)/2}$. The initial key inequality is the case $p=1$ from Theorem~\ref{ThmTfMf}, namely
\begin{equation} \label{p=1}
\|Tf\|_{L^{1}(w)}
\leq
c \|M f\|_{L^{1}(w)}
\end{equation}
for any $w\in A_{\infty}$ and for any smooth function such that the left-hand side is finite.  Further, from the same theorem we have a good control of the constant, $c \approx  [w]^{2}_{A_{\infty}}$. However,  we don't need to be so precise in this appendix. More important, we need the following vector-valued extension from part b) of Theorem \ref{extrapcorollary}:

\begin{equation}\label{vector-valued-p=1}
\Big\|
\Big(\sum_j |T f_j|^q\Big)^{1/q}
\Big\|_{L^1(w)}
\le
c\, \Big\| \Big(\sum_j (Mf_j)^q\Big)^{1/q} \Big\|_{L^1(w)}
\end{equation}
which holds for any $q\in(0,\infty)$ and for $w\in A_{\infty}$.

\subsection{Banach function spaces }

\begin{Theorem} \label{BFS}
Let $X$ be a BFS such that $M: X' \to X'$ where $X'$ is the associate space to $X$. Then,
\begin{itemize}
\item[a) ] {\bf Scalar context}.
\begin{equation*}
\|Tf\|_X \le c\, \|Mf\|_X,
\end{equation*}
for any smooth function such that the left-hand side is finite.

\item[b) ] {\bf Vector-valued extension}.  If $q\in (0,\infty)$ then
\begin{equation*}
\Big\|
\Big(\sum_j |T f_j|^q\Big)^{1/q}
\Big\|_{X}
\le
c\, \Big\| \Big(\sum_j (Mf_j)^q\Big)^{1/q} \Big\|_{X}
\end{equation*}
and if $q>1$  this is bounded by $c\| M(\|f\|_{\ell^q}) \|_{X}$.
\end{itemize}
\end{Theorem}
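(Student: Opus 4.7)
The plan is to reduce Theorem \ref{BFS} to the case $p=1$ of Theorem \ref{ThmTfMf} by a duality argument in the Banach function space setting, coupled with a Rubio de Francia iteration carried out on the associate space $X'$. This is in line with the philosophy of \cite{CUMP, CGMP}: once an $A_\infty$ estimate at $p=1$ is available, one can ``extrapolate'' it to any BFS $X$ whose associate space sustains the Hardy--Littlewood maximal function.

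For part (a), I would start from the BFS duality identity
\[
\|Tf\|_X = \sup \Big\{ \int_{\mathbb R^n} |Tf(x)|\, g(x)\, dx \,:\, g\geq 0,\ \|g\|_{X'}\le 1 \Big\}.
\]
Given such a $g$, the hypothesis $M\colon X'\to X'$ allows one to form the Rubio de Francia operator
\[
Rg(x) := \sum_{k=0}^\infty \frac{M^kg(x)}{\bigl(2\|M\|_{X'\to X'}\bigr)^k},
\]
which enjoys the three standard properties: (i) $g\le Rg$ pointwise, (ii) $\|Rg\|_{X'}\le 2\|g\|_{X'}$, and (iii) $Rg\in A_1$ with $[Rg]_{A_1}\le 2\|M\|_{X'\to X'}$. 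In particular $Rg\in A_\infty$, so inequality \eqref{p=1} applied with weight $w=Rg$ yields $\|Tf\|_{L^1(Rg)}\le c\,\|Mf\|_{L^1(Rg)}$. Chaining these estimates with the H\"older inequality for BFS gives
\[
\int|Tf|\,g \;\le\; \int|Tf|\,Rg \;\le\; c\int Mf\cdot Rg \;\le\; c\,\|Mf\|_X\,\|Rg\|_{X'} \;\le\; 2c\,\|Mf\|_X,
\]
and taking the supremum over $g$ finishes (a).

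Part (b) follows by exactly the same scheme, but starting from the vector-valued estimate \eqref{vector-valued-p=1}. Set $F:=(\sum_j|Tf_j|^q)^{1/q}$ and $G:=(\sum_j(Mf_j)^q)^{1/q}$ and write $\|F\|_X$ as a supremum of $\int F g$ over $g\geq0$ with $\|g\|_{X'}\le1$. Applying the same Rubio de Francia $Rg\in A_1\subset A_\infty$ together with \eqref{vector-valued-p=1} produces
\[
\int F\,g \;\le\; \int F\cdot Rg \;\le\; c\int G\cdot Rg \;\le\; c\,\|G\|_X\,\|Rg\|_{X'} \;\le\; 2c\,\|G\|_X.
\]
For the final improvement when $q>1$, I would invoke the pointwise Minkowski-type inequality
\[
\Big(\sum_j (Mf_j(x))^q\Big)^{1/q} \;\le\; M\!\Big(\|f\|_{\ell^q}\Big)(x),
\]
which is obtained by applying the $\ell^q$-Minkowski inequality inside the average $\tfrac1{|Q|}\int_Q|f_j|$ and then taking the supremum over cubes; this upgrades the right-hand side to $c\|M(\|f\|_{\ell^q})\|_X$.

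The main technical nuisance I expect is the standard one: ensuring that \eqref{p=1} (and its vector-valued counterpart \eqref{vector-valued-p=1}) is legitimately applicable with $w=Rg$, namely that $\|Tf\|_{L^1(Rg)}<\infty$ so that the ``left-hand side finite'' hypothesis is met. For smooth $f$ (say $f\in C_c^\infty$) this can be arranged by truncating $Rg$ to a ball (as in the $p=1$ proof of Theorem \ref{ThmTfMf}, where $g_R=w\chi_{Q_R}$), exploiting the $L^2$ boundedness of $T$ and the local integrability coming from property (ii), and then letting the truncation parameter tend to infinity by monotone convergence. Beyond this routine step, no new ingredient is needed; all the harmonic-analytic content has already been absorbed into Theorem \ref{ThmTfMf} and \eqref{vector-valued-p=1}.
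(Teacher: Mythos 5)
Parts (a) and the first inequality of (b) match the paper's argument essentially line for line: write $\|Tf\|_X$ (resp.\ $\|F\|_X$) by duality against $X'$, upgrade $g$ to $Rg\in A_1\subset A_\infty$ via Rubio de Francia on $X'$, apply \eqref{p=1} (resp.\ \eqref{vector-valued-p=1}) with $w=Rg$, and close with H\"older and property (ii). That is exactly what the paper does.

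However, the final step of (b) contains a genuine gap. You claim the pointwise bound
\[
\Big(\sum_j (Mf_j(x))^q\Big)^{1/q}\le M\bigl(\|f\|_{\ell^q}\bigr)(x),
\]
obtained by ``Minkowski inside the average and then taking the sup over cubes.'' That derivation gives, for each fixed cube $Q\ni x$, the correct inequality $(\sum_j \langle |f_j|\rangle_Q^q)^{1/q}\le \langle \|f\|_{\ell^q}\rangle_Q\le M(\|f\|_{\ell^q})(x)$, but when you now pass to the supremum over $Q$ you can only conclude $\sup_Q(\sum_j \langle |f_j|\rangle_Q^q)^{1/q}\le M(\|f\|_{\ell^q})(x)$, and this left-hand side is \emph{smaller} than $(\sum_j(Mf_j)^q)^{1/q}$, not larger: each $Mf_j$ may pick a different optimizing cube. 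In fact the claimed pointwise inequality is false; taking $n=1$, $q=2$, $f_1=\chi_{[0,1]}$, $f_2=\chi_{[10,11]}$ and $x=5$ gives $(Mf_1(5)^2+Mf_2(5)^2)^{1/2}\approx ((1/5)^2+(1/6)^2)^{1/2}\approx 0.26$, whereas $\|f\|_{\ell^2}=\chi_{[0,1]\cup[10,11]}$ and $M(\|f\|_{\ell^2})(5)\approx 2/11\approx 0.18$. This is precisely why there is no pointwise Fefferman--Stein inequality for $\M_q$; only $L^p$ bounds hold. The paper circumvents this by invoking the pointwise \emph{sharp-maximal} estimate $M^\#_\delta(\M_q f)\le c\,M(\|f\|_{\ell^q})$ from \cite{CGMP} and then applying the Fefferman--Stein inequality $\int|h|^p w\le C\int (M^\#h)^p w$ for $w\in A_\infty$, which does carry the estimate from $M^\#_\delta(\M_q f)$ back to $\M_q f$ in any $L^p(w)$, and hence (after the duality/Rubio de Francia step) in $X$. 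You would need to replace your last paragraph with this two-step sharp-function argument.
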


Therefore, we get the following:

\begin{Corollary}  Let $T$ be as above. Let $X$ be a
rearrangement invariant BFS such that the Boyd indices
${\overline\alpha}_{X}$ and ${\underline \alpha}_{X}$ satisfy
\[
0< {\underline \alpha}_{X} \le {\overline\alpha}_{X}   <1
\]
namely,  $M: X \to X$ \,and\, and $M: X' \to X'$, then\\
$$T: X\to X$$
and
$$T: X_{\ell^{q}} \to X_{\ell^{q}}.
$$

\end{Corollary}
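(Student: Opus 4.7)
The plan is to invoke Theorem~\ref{BFS} directly; the only substantive content of the corollary is the translation of the Boyd index hypothesis into the maximal-function boundedness on the associate space required by that theorem, followed by a removal of $M$ from the vector-valued conclusion. So the first step is to recall Boyd's classical theorem for a rearrangement invariant BFS: $M$ is bounded on $X$ if and only if $\overline{\alpha}_X<1$. Combined with the standard identities $\underline{\alpha}_{X'}=1-\overline{\alpha}_X$ and $\overline{\alpha}_{X'}=1-\underline{\alpha}_X$, the hypothesis $0<\underline{\alpha}_X\le\overline{\alpha}_X<1$ is then equivalent to the simultaneous boundedness of $M$ on $X$ and on $X'$. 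In particular $M:X'\to X'$, which is precisely the hypothesis of Theorem~\ref{BFS}.

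Applying Theorem~\ref{BFS}(a) immediately gives the scalar conclusion $T:X\to X$. Part (b) of the same theorem supplies the vector-valued estimate
\[
\Big\|\Big(\sum_j |Tf_j|^q\Big)^{1/q}\Big\|_X \le c\,\Big\|\Big(\sum_j (Mf_j)^q\Big)^{1/q}\Big\|_X
\]
for every $q\in(0,\infty)$, so the remaining task is to dominate the right-hand side by $\|(\sum_j |f_j|^q)^{1/q}\|_X$. For $q\ge 1$ this is the easy range: Minkowski's inequality inside the averaging defining $M$ yields the pointwise bound $(\sum_j (Mf_j)^q)^{1/q}\le M\bigl((\sum_j |f_j|^q)^{1/q}\bigr)$, and the boundedness $M:X\to X$ (which we have just extracted from Boyd's theorem) closes the loop. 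Note that this already covers the ``if $q>1$'' addendum to part (b) of Theorem~\ref{BFS}.

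For $0<q<1$ the pointwise Minkowski trick fails, and I would fall back on the $A_\infty$ extrapolation theorem of~\cite{CMP,CGMP}. The plan is to run Rubio de Francia style extrapolation starting from the classical weighted vector-valued Fefferman--Stein inequality $\|(\sum_j (Mf_j)^q)^{1/q}\|_{L^{p_0}(w)}\lesssim \|(\sum_j |f_j|^q)^{1/q}\|_{L^{p_0}(w)}$ (valid for any $w\in A_\infty$ and a single fixed exponent $p_0$), and then transfer the resulting weighted inequality to $X$ via the BFS extrapolation machinery of~\cite{CGMP}; this transfer is exactly what requires $M$ to be bounded both on $X$ and on $X'$, which is again guaranteed by the Boyd index hypothesis.

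The main and essentially only obstacle is the $0<q<1$ range, since $q\ge 1$ is a direct consequence of Theorem~\ref{BFS} plus a pointwise estimate. There, verifying that the BFS extrapolation framework of~\cite{CGMP} applies to our underlying scalar inequality--and hence produces the vector-valued Fefferman--Stein bound on $X$--is the only nontrivial check; the Boyd index hypothesis is what makes that verification go through.
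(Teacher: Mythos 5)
Your reduction via Boyd/Lorentz--Shimogaki and Theorem \ref{BFS} is exactly the paper's route (the paper disposes of the corollary in one line this way), and the scalar part is fine, but the way you close the vector-valued estimate contains a genuine error. The pointwise bound $\bigl(\sum_j (Mf_j)^q\bigr)^{1/q}\le M\bigl((\sum_j|f_j|^q)^{1/q}\bigr)$ that you extract from Minkowski's inequality is false for every finite $q\ge 1$: Minkowski applies only cube by cube, whereas each $Mf_j(x)$ selects its own optimal cube. Concretely, take $A_j=B(0,2^{-j})\setminus B(0,2^{-j-1})$ and $f_j=\chi_{A_j}$, $j=1,\dots,N$; then $Mf_j(0)\ge 1-2^{-n}$ for every $j$, so the left-hand side at $x=0$ is $\gtrsim N^{1/q}$, while $\bigl(\sum_j|f_j|^q\bigr)^{1/q}=\chi_{\cup_j A_j}\le \chi_{B(0,1/2)}$ gives $M(\|f\|_{\ell^q})(0)\le 1$. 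This failure is precisely why the Fefferman--Stein vector-valued maximal theorem is nontrivial, and why the ``$q>1$'' clause of Theorem \ref{BFS}(b) is proved in the paper as a norm inequality, via the pointwise sharp-function estimate $M^\#_\delta(\overline{M}_qf)\le c\,M(\|f\|_{\ell^q})$ from \cite{CGMP} combined with the Fefferman--Stein $M^\#$ inequality. The repair is immediate: do not re-derive that clause, just quote it --- for $q>1$, $\|(\sum_j|Tf_j|^q)^{1/q}\|_X\le c\,\|M(\|f\|_{\ell^q})\|_X\le c\,\|\,\|f\|_{\ell^q}\,\|_X$, using $M:X\to X$ from the Boyd-index hypothesis.

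The $0<q<1$ branch should be dropped altogether. The ``classical weighted vector-valued Fefferman--Stein inequality'' you propose as the starting point for extrapolation fails for $q\le 1$ even for Lebesgue measure (the $\ell^q$-valued maximal inequality holds only for $1<q\le\infty$), and the target statement itself is false in that range: already for the Hilbert transform on $X=L^2$ the $\ell^q$-valued bound fails for $q\le 1$, and the same is true for $T_\Omega$. The corollary is to be read for $1<q<\infty$, which is exactly the range reached by the final clause of Theorem \ref{BFS}(b); with that reading the proof is the two-line argument above, which is what the paper does.
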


The corollary follows directly from Lorentz-Shimogaki's
characterization of the rearrengement invariant BFS for which the
Hardy--Littlewood maximal is bounded as can be found in \cite{BS}.

 \begin{proof}[Proof of Theorem \ref{BFS}]

a)
The $\|Tf\|_X$ can be writen as
\[
\|Tf\|_X = \sup \Big| \int_{\bbR^n}Tf\,g \Big|
\]
where the supremum is taken over all functions $g \in X'$ with $\|g\|_{X'}=1$. Let us fix one of these $g$.
We now adapt Rubio de Francia's algorithm to this context:
consider
\[
G= \sum_{k=0}^{\infty} \frac{M^k(g)}{(2\|M\|_{X'})^{k}}
\]
where $M^k$ is the operator $M$ iterated $k$ times and $A$ is the norm of $M$ as bounded operator on $X'$. It
is immediate to see that:

a) $g\le G$

b) $\|G\|_{X'} \le 2\, \|g\|_{X'}$

c) $G \in A_1$, in fact $MG \le 2\|M\|_{X'}\, G $

In particular since $G \in A_{\infty}$ we can apply \eqref{p=1}
\[
\int_{\bbR^n} |Tf|\,|g| \le \int_{\bbR^n} |Tf|\,G
\le C\, \int_{\bbR^n} Mf\,G
\le C\, \|Mf\|_X  \|G\|_{X'}
\le C\, \|Mf\|_X  \|g\|_{X'}.
\]
Then, taking the supremum over all $g \in X'$ we deduce the theorem.

b) The proof of the first inequality it is identical to the proof of the scalar situation using \eqref{vector-valued-p=1}. For the second we will use the following pointwise estimate contained in \cite{CGMP}: there exists a constant
$c>0$ depending on $q,\delta,n$, such that
\begin{equation*}
M^\#_\delta \Big(\overline{M}_{q} f\Big)(x)
\leq
c\, M(\|f\|_{\ell^q})(x),   \qquad x  \in \mathbb R^n, q>1.
\end{equation*}
where we use the notation
$$ \M_qf(x) = \left(\sum_{i=1}^\infty Mf_j(x)^q\right)^{1/q}, $$
and
$$M^\#_\delta g(x)= M^\#(|g|^\delta)(x)^{\frac1\delta},
$$
where the Fefferman-Stein sharp maximal function is given by
$$
M^{\#}f(x)
=
\sup_{x \in B} \frac{1}{|Q|}\int_{Q} |f(y)- f_{Q}|\,dy.
$$

Assuming for the moment this result, we use the well known
Fefferman-Stein estimate \cite{J},
$$
\int_{\mathbb R^n} |f(x)|^p\, w(x)\,d x
\le
C\,\int_{\mathbb R^n} M^\# f(x)^p\, w(x)\,d x,
$$
for any $A_\infty$-weight $w$, any $p$, $0<p<\infty$ and for any
function $f$ such that left-hand side is finite.  Hence, if
$0<\delta<1$,
\begin{eqnarray*}
\int_{\bbR^n} \overline{M}_{q}f(x)^p \, w(x)\,d x
&=&
\int_{\bbR^n}
\big(\overline{M}_{q}f(x)^{\delta}\big)^{\frac{p}{\delta}}
\,
w(x)\,d x\\
&\le&
C\,\int_{\bbR^n} M^\#_\delta (\overline{M}_{q}f)(x)^p \,
w(x)\,d x\\
&\le&
C\,\int_{\bbR^n}  M(\|f\|_{\ell^q})(x)^p\, w(x)\,d x.
\end{eqnarray*}
The proof is finished.

\end{proof}

\subsection{$L^{p}$ variable theory}

\

Given  a measurable function $p: \bbR^n \rightarrow [1,\infty)$,\,
$L^{p(\cdot)}(\bbR^n)$ denotes the set of measurable functions $f$
on $\bbR^n$ such that for some $\lambda>0$,
\[ \int_{\bbR^n}  \left(\frac{|f(x)|}{\lambda}\right)^{p(x)}\,dx < \infty. \]
This set becomes a Banach function space when equipped with the
norm
\[ \|f\|_{p(\cdot),\bbR^n} = \inf\bigg\{ \lambda > 0 :
\int_{\bbR^n} \left(\frac{|f(x)|}{\lambda}\right)^{p(x)}\,dx \leq 1
\bigg\}.  \]
These spaces are referred to as variable $L^p$ spaces and they generalize the
standard $L^p$ spaces.  They have many properties in
common with the standard $L^p$ spaces.

We define $\P(\bbR^n)$ to be the set of measurable functions $p :
\bbR^n \rightarrow [1,\infty)$ such that
$$
p_-
=
\essinf\{ p(x) : x\in \bbR^n \} > 1, \qquad p_+
=
\esssup\{ p(x) : x\in \bbR^n \} < \infty.
$$
Under these conditions $L^{p(\cdot)}(\bbR^n)$ becomes a uniformly convex, reflexive space whose dual space \,$(L^{p(\cdot)}(\bbR^n))^*$ is equal to $L^{p'(\cdot)}(\bbR^n)$, where $p'(\cdot)$
is the conjugate exponent function defined by
\[ \frac{1}{p(x)}+\frac{1}{p'(x)} = 1, \qquad x\in\bbR^n. \]

Let  $\B(\bbR^n)$ be the set of $p(\cdot)\in\P(\bbR^n)$ such that  the maximal function
$M$ is bounded on $L^{p(\cdot)}(\bbR^n)$.  Some examples of these functions are those satisfying  the following log-H\"older continuous property,
\begin{equation} \label{diening-cond}
|p(x)-p(y)| \leq \frac{C}{|\log|x-y||},  \qquad |x-y| \leq 1/2,
\end{equation}
\begin{equation}\label{infty-cond}
|p(x) - p(y)| \leq \frac{C}{\log(e+|x|)},  \qquad |y| \geq |x|.
\end{equation}
See \cite{CF} and  \cite{DHHR}  for more information about these spaces.

Because our proofs rely on duality arguments, we will not need
that the maximal operator is bounded on $L^{p(\cdot)}(\bbR^n)$ but
on its associate space $L^{p'(\cdot)}(\bbR^n)$.

Since
$$
|p'(x)-p'(y)| \leq \frac{|p(x)-p(y)|}{(p_- -1)^2},
$$
it follows at once that if $p(\cdot)$ satisfies
\eqref{diening-cond} and \eqref{infty-cond}, then so does
$p'(\cdot)$, i.e., if these two conditions hold, then $M$ is
bounded on $L^{p(\cdot)}(\bbR^n)$ and $L^{p'(\cdot)}(\bbR^n)$.

The following extrapolation theorem from \cite{CFMP} is the key estimate.

\begin{Theorem} \label{extrapol-thm}
Let $\F$ be a family of pairs of measurable functions
$(f,g)$ and suppose that for every weight $w\in A_1$,
\begin{equation*}
\int_{\bbR^n} f(x)\,w(x)\,dx
\leq
C_0\, \int_{\bbR^n} g(x)\,w(x)\,dx, \qquad (f,g)\in\F,
\end{equation*}
where $C_0$ depends only on the $A_1$ constant of $w$.
Let $p(\cdot)\in \P(\bbR^n)$ be such that $p'(\cdot)\in\B(\bbR^n)$.  Then for all $(f,g)\in\F$ such
that $f\in L^{p(\cdot)}(\Omega)$,
\begin{equation*}
\|f\|_{p(\cdot),\Omega} \leq C\,\|g\|_{p(\cdot),\Omega},
\end{equation*}
where the constant $C$ is independent of the pair $(f,g)$.
\end{Theorem}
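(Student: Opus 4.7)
The plan is to use a Rubio de Francia extrapolation argument adapted to the variable Lebesgue space setting, mirroring the proof of Theorem \ref{BFS}a) which is the template for this type of duality argument. The starting point is the duality identity
\[
\|f\|_{p(\cdot),\bbR^n} \lesssim \sup_{h} \int_{\bbR^n} f(x)\, h(x)\,dx,
\]
where the supremum runs over nonnegative $h \in L^{p'(\cdot)}(\bbR^n)$ with $\|h\|_{p'(\cdot),\bbR^n} \le 1$. This identification of the norm via the associate space is valid precisely because $p(\cdot) \in \P(\bbR^n)$ guarantees that $L^{p(\cdot)}(\bbR^n)$ is reflexive with associate space $L^{p'(\cdot)}(\bbR^n)$, and it reduces the problem to bounding the bilinear pairing.

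Next, fix one such $h$ and build an $A_1$ weight out of it by iterating the Hardy--Littlewood maximal operator on the dual side. Since $p'(\cdot) \in \B(\bbR^n)$, the maximal operator $M$ is bounded on $L^{p'(\cdot)}(\bbR^n)$, so setting
\[
H(x) = \sum_{k=0}^{\infty} \frac{M^k h(x)}{\bigl(2\,\|M\|_{L^{p'(\cdot)}}\bigr)^{k}}
\]
produces, by the usual properties of the Rubio de Francia iteration: (i) $h \le H$ pointwise; (ii) $\|H\|_{p'(\cdot),\bbR^n} \le 2\,\|h\|_{p'(\cdot),\bbR^n} \le 2$; and (iii) $H \in A_1$ with $[H]_{A_1} \le 2\,\|M\|_{L^{p'(\cdot)}}$, a bound depending only on $p(\cdot)$.

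Now feed $H$ into the hypothesis. Since $[H]_{A_1}$ is controlled by a constant depending only on $p(\cdot)$, the hypothesized weighted inequality gives $\int f\,H \le C_0 \int g\,H$ with $C_0$ a constant depending only on $p(\cdot)$. Combining this with H\"older's inequality in the variable exponent spaces yields
\[
\int_{\bbR^n} f\,h \;\le\; \int_{\bbR^n} f\,H \;\le\; C_0 \int_{\bbR^n} g\,H \;\le\; C_0\,\|g\|_{p(\cdot),\bbR^n}\,\|H\|_{p'(\cdot),\bbR^n} \;\le\; 2C_0\,\|g\|_{p(\cdot),\bbR^n}.
\]
Taking the supremum over admissible $h$ finishes the proof.

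The main obstacle, as is typical for Rubio de Francia type arguments, is the qualitative issue of ensuring that the duality supremum is attained (or at least approached) by functions $h$ for which $\int f\,h$ is finite and nonzero; this is exactly the reason for the hypothesis $f \in L^{p(\cdot)}(\Omega)$, which guarantees that the left-hand side is finite and that the integrals above make sense. Beyond this, the only subtlety specific to variable exponents is verifying the H\"older inequality and the norm-by-duality formula in that setting, both of which are standard consequences of $p(\cdot),p'(\cdot) \in \P(\bbR^n)$.
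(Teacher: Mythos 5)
Your proof is correct and follows exactly the Rubio de Francia scheme that the paper uses for the Banach function space analogue in Theorem \ref{BFS}a), which is also the argument of the cited reference \cite{CFMP}; the paper itself does not reprove this statement but simply quotes it from \cite{CFMP}. The only implicit points worth flagging are that the members of the pairs $(f,g)\in\F$ are understood to be nonnegative (so that $h\le H$ gives $\int fh\le\int fH$ and the duality supremum over nonnegative $h$ recovers the norm) and that the constant $C_0([w]_{A_1})$ is understood to be nondecreasing in the characteristic, both of which are standard conventions in extrapolation.
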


\begin{Theorem} Let $p(\cdot) \in \P(\bbR^n)$ be such that $p'(\cdot)\in\B(\bbR^n)$.   Then:
\begin{itemize}
\item[a) ] {\bf Scalar context}.
\begin{equation*}
\|Tf\|_{L^{p(\cdot)}(\bbR^n)} \le c\, \|Mf\|_{L^{p(\cdot)}(\bbR^n)},
\end{equation*}
for any smooth function such that the left-hand side is finite.

\item[b) ] {\bf Vector-valued extension}.  If $q\in (0,\infty)$ then
\begin{equation*}
\Big\|
\Big(\sum_j |T f_j|^q\Big)^{1/q} \Big\|_{L^{p(\cdot)}}
\le
c\, \Big\| \Big(\sum_j (Mf_j)^q\Big)^{1/q} \Big\|_{L^{p(\cdot)}}
\end{equation*}
and if $q>1$  this is bounded by
$c\,\| M(\|f\|_{\ell^q}) \|_{L^{p(\cdot)}}$.
%
\end{itemize}
\end{Theorem}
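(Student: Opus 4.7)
The plan is to reduce both statements to the $A_1$-weighted endpoint already in hand, using the extrapolation machinery of Theorem~\ref{extrapol-thm}. The key point is that Theorem~\ref{ThmTfMf} at $p=1$ and its vector-valued extension \eqref{vector-valued-p=1} give us exactly the kind of $L^1(w)$ inequality that Theorem~\ref{extrapol-thm} accepts as input, since every $A_1$ weight is in $A_\infty$ and hence the constants $[w]_{A_\infty}$ appearing in those estimates are controlled by $[w]_{A_1}$. The hypothesis $p'(\cdot)\in\B(\bbR^n)$ is precisely what the extrapolation theorem from \cite{CFMP} requires.

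For part (a), I would form the family
\[
\F=\bigl\{(|Tf|,\,c|Mf|)\,:\, f\in C_c^\infty(\bbR^n)\bigr\}
\]
(with the usual caveat that the members are such that $\|Tf\|_{L^1(w)}<\infty$, which is guaranteed by a routine truncation of the kernel for smooth compactly supported $f$). By Theorem~\ref{ThmTfMf} applied with $p=1$, every pair in $\F$ satisfies $\int |Tf|\,w\le C_0\int |Mf|\,w$ for every $w\in A_1$, with $C_0$ depending only on $[w]_{A_1}$. Then Theorem~\ref{extrapol-thm} yields directly $\|Tf\|_{p(\cdot)}\le c\|Mf\|_{p(\cdot)}$ for every $f$ such that the left hand side is finite, giving the scalar estimate.

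For part (b), the same mechanism applies, now with the family
\[
\F=\Bigl\{\Bigl(\bigl(\textstyle\sum_j|Tf_j|^q\bigr)^{1/q},\ c\bigl(\textstyle\sum_j (Mf_j)^q\bigr)^{1/q}\Bigr)\Bigr\},
\]
whose $A_1$-weighted $L^1$ estimate is precisely \eqref{vector-valued-p=1} (valid for every $A_\infty$ weight, a fortiori $A_1$). Extrapolation gives the first vector-valued inequality. For the upgraded bound valid when $q>1$, I would invoke the pointwise estimate
\[
M^{\#}_\delta\bigl(\overline{M}_q f\bigr)(x)\le c\,M\bigl(\|f\|_{\ell^q}\bigr)(x),\qquad 0<\delta<1,
\]
used already in the BFS section, and combine it with the Fefferman--Stein sharp function inequality in $L^{p(\cdot)}$, i.e.\ $\|g\|_{p(\cdot)}\le c\|M^{\#}g\|_{p(\cdot)}$. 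The latter is not standard but follows by one more application of Theorem~\ref{extrapol-thm} to the family $\{(|g|,cM^\#g)\}$, since the weighted $A_\infty$ version of the Fefferman--Stein inequality is classical.

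The main obstacle, as usual in extrapolation arguments, is the a priori finiteness of the left hand sides: one must justify that, for $f$ in a suitable dense class (say $C_c^\infty$, which is dense in $L^{p(\cdot)}$ under our hypotheses), both $\|Tf\|_{L^1(w)}$ and $\|Tf\|_{p(\cdot)}$ are finite so that Theorem~\ref{ThmTfMf} applies and the conclusion of the extrapolation theorem is non-vacuous. This is handled by a standard smooth truncation of the kernel together with the known unweighted $L^s$ boundedness of $T$ for $s\in(1,\infty)$, after which the estimates pass to the limit by Fatou's lemma.
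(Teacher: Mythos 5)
Your proposal is correct and takes essentially the same route as the paper. You invoke Theorem~\ref{extrapol-thm} as a black box applied to the families $\{(|Tf|,c|Mf|)\}$ and $\{(\|Tf\|_{\ell^q},c\|Mf\|_{\ell^q})\}$, with the $A_1$-weighted $L^1$ input supplied by \eqref{p=1} and \eqref{vector-valued-p=1}, whereas the paper re-runs the Rubio de Francia construction on $L^{p'(\cdot)}$ explicitly, mirroring its proof of Theorem~\ref{BFS}; since Theorem~\ref{extrapol-thm} is itself a packaging of that construction, the two are interchangeable, and both rely on the same pointwise sharp-maximal bound $M^{\#}_{\delta}(\overline{M}_{q}f)\le cM(\|f\|_{\ell^q})$ for the $q>1$ upgrade. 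The one point to tighten is the a priori finiteness: ``smooth truncation of the kernel'' should be read as an outer truncation $T^{N}f=\int_{|x-y|<N}K(x-y)f(y)\,dy$, which has compactly supported output so that $\|T^{N}f\|_{L^1(w)}<\infty$ trivially; the sparse bound of Theorem~\ref{Thm:Sparse} holds uniformly in $N$, and Fatou then passes the estimate to $T$ itself (alternatively one can check directly that $\|Mf\|_{L^1(w)}<\infty$ forces $\|Tf\|_{L^1(w)}<\infty$ via the $|x|^{-n}$ lower bound for $Mf$ at infinity, local $L^{s}$ boundedness of $T$, and the reverse H\"older inequality).
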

The proof follows the same scheme as the proof of Theorem \ref{BFS} from the previous section using the main ideas from \cite{CFMP}.

Therefore, we get the following.

\begin{Corollary}Let $p(\cdot) \in \P(\bbR^n)$ be such that both $p(\cdot)$ and  $p'(\cdot)$ belong to $\B(\bbR^n)$. Then if \,$X=L^{p(\cdot)}(\bbR^n)$ we have that $T: X\to X$ and also $T: X_{\ell^{q}} \to X_{\ell^{q}}.$
\end{Corollary}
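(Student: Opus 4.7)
The plan is essentially a two-step concatenation: feed the hypothesis $p(\cdot)\in\B(\bbR^n)$ into the preceding Theorem to turn its right-hand sides (which involve the maximal function) into pure $L^{p(\cdot)}$-norms of $f$. Since the previous Theorem already uses the dual assumption $p'(\cdot)\in\B(\bbR^n)$ via a Rubio de Francia-type duality argument, the work left for the Corollary is purely mechanical.

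For the scalar assertion, I would apply the preceding Theorem to obtain
\[
\|Tf\|_{L^{p(\cdot)}}\le c\,\|Mf\|_{L^{p(\cdot)}},
\]
and then invoke $p(\cdot)\in\B(\bbR^n)$, which by definition says $M\colon L^{p(\cdot)}\to L^{p(\cdot)}$ is bounded; composing the two inequalities yields $T\colon X\to X$. For the vector-valued assertion in the easy range $q>1$, the preceding Theorem furnishes
\[
\Big\|\Big(\sum_j|Tf_j|^q\Big)^{1/q}\Big\|_{L^{p(\cdot)}}\le c\,\big\|M(\|f\|_{\ell^q})\big\|_{L^{p(\cdot)}},
\]
and again one application of $M\colon L^{p(\cdot)}\to L^{p(\cdot)}$ closes the argument.

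The only place where one has to do a little extra thinking is the range $0<q\le 1$ of the vector-valued statement, because there the preceding Theorem only provides
\[
\Big\|\Big(\sum_j|Tf_j|^q\Big)^{1/q}\Big\|_{L^{p(\cdot)}}\le c\,\Big\|\Big(\sum_j(Mf_j)^q\Big)^{1/q}\Big\|_{L^{p(\cdot)}},
\]
so one needs the vector-valued maximal inequality in $L^{p(\cdot)}$. I would deduce this by the same extrapolation machinery used earlier in the appendix: the classical weighted vector-valued Fefferman--Stein inequality for $M$ with any $A_1$-weight, together with Theorem \ref{extrapol-thm} from \cite{CFMP} (whose hypothesis is precisely $p'(\cdot)\in\B(\bbR^n)$), transfers it to the variable $L^{p(\cdot)}$ setting. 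Combining gives the required control by $\|(\sum_j|f_j|^q)^{1/q}\|_{L^{p(\cdot)}}$, hence $T\colon X_{\ell^q}\to X_{\ell^q}$.

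The main (mild) obstacle is this last vector-valued maximal bound for $q\le 1$; everything else is a one-line composition. Since $\B(\bbR^n)$ is assumed for both $p(\cdot)$ and $p'(\cdot)$, both ingredients--the preceding Theorem and the boundedness of $M$--are available with the required constants, and no new estimate is needed.
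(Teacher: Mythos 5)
Your core argument for the scalar case and for $q>1$ in the vector-valued case—plug $p(\cdot)\in\B(\bbR^n)$ into the preceding theorem and compose with the boundedness of $M$ on $L^{p(\cdot)}$—is exactly what the paper intends (the paper gives no separate proof, indicating it follows by the same scheme as the BFS case).

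However, your proposed handling of the range $0<q\le 1$ rests on a non-existent ingredient. The weighted vector-valued Fefferman--Stein maximal inequality
\[
\Big\|\Big(\sum_j(Mf_j)^q\Big)^{1/q}\Big\|_{L^p(w)}\le C\,\Big\|\Big(\sum_j|f_j|^q\Big)^{1/q}\Big\|_{L^p(w)}
\]
holds only for $1<q\le\infty$ (and $1<p<\infty$); it is \emph{false} for $q=1$, already unweighted, and a fortiori for $q<1$. Consequently there is nothing to extrapolate, and the step cannot be repaired—indeed the target statement $T\colon X_{\ell^q}\to X_{\ell^q}$ is not expected to hold for $q\le 1$ even when $X=L^p$. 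The corollary should be read with $q\in(1,\infty)$, consistent with the fact that the preceding theorem's second estimate in part (b), $\| \overline{M}_q f\|_{L^{p(\cdot)}}\le c\|M(\|f\|_{\ell^q})\|_{L^{p(\cdot)}}$, is only asserted for $q>1$. With that reading your proof is a straightforward one-line composition, as in the paper.
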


\section{} \label{B}
In this appendix, we provide a different proof of our Theorem \ref{Thm:omegalq}. It is actually given in \cite{B}, but without control on the constant.  Take $\bar B(t)= t^{\frac 12(\frac pr+1)}$, it is easy to check $\bar B(t)\in B_{p/r}$. Observe that for any weight $w$ and Young function $A$ such that $\bar A \in B_{p'}$, we have
\begin{align*}
\sup_Q \|w^{1/p}\|_{A, Q}\|(M_{A_p}w)^{-r/p}\|_{B, Q}^{1/r}&\le \sup_Q\inf_{x\in Q} (M_{A_p}w)^{\frac 1p}\|(M_{A_p}w)^{-r/p}\|_{B, Q}^{1/r}\le 1.
\end{align*} Recall that $v=M_{A_p} w$. Now we have,
\begin{align*}
\|\mathcal A_{r,\mathcal S}(f)\|_{L^p(w)}&= \sup_{\|g\|_{L^{p'}}=1}\int \mathcal A_{r,\mathcal S}(f) w^\frac1pg\\
&= \sup_{\|g\|_{L^{p'}}=1} \sum_{Q\in \mathcal S}\langle f^r v^{\frac rp} v^{-\frac rp}\rangle_Q^{\frac 1r}\int_Q w^{\frac 1p} g\\
&\le 4 \sup_{\|g\|_{L^{p'}}=1} \sum_{Q\in \mathcal S}\| f^r v^{\frac rp} \|_{\bar B, Q}^{\frac 1r} \|v^{-\frac rp}\|_{B, Q}^{\frac 1r} \|w^{\frac 1p}\|_{A, Q} \|g\|_{\bar A, Q}|Q|\\
&\le 8 \sup_{\|g\|_{L^{p'}}=1} \sum_{Q\in \mathcal S}\| f^r v^{\frac rp} \|_{\bar B, Q}^{\frac 1r}  \|g\|_{\bar A, Q}|E_Q|\\
&\le 8 \sup_{\|g\|_{L^{p'}}=1}\int M_{\bar B}(f^r v^{\frac rp})^{\frac 1r} M_{\bar A}(g) \\
&\le c_n \|M_{\bar A}\|_{L^{p'}} (\beta_{p/r}(\bar B))^{\frac 1r} \|f\|_{L^p(v)},
\end{align*}where in the last step, we have used the H\"older's inequality and Lemma \ref{lem:Young}. A direct calculation yields
\[
\beta_{p/r}(\bar B)= \int_1^\infty \frac{t^{\frac 12(\frac pr+1)}}{t^{p/r}}\frac {dt}{t}=\frac {2r}{p -r}.
\]
Altogether, we obtain
\begin{equation}\label{eq:al}
\|\mathcal A_{r,\mathcal S}(f)\|_{L^p(w)}\le c_n \Big(\frac {2r}{p -r}\Big)^{\frac 1r}\|M_{\bar A}\|_{L^{p'}} \|f\|_{L^p(v)}.
\end{equation}
For the Calder\'on-Zygmund case, namely, $r=1$, \eqref{eq:al} turns to
\[
\|\mathcal A_{r,\mathcal S}(f)\|_{L^p(w)}\le c_n p'\|M_{\bar A}\|_{L^{p'}} \|f\|_{L^p(v)}.
\]
For $T_\Omega$ with $\Omega\in L^\infty$, choosing $r= 1+\frac{p-1}2$, we obtain
\[
\|T_{\Omega}(f)\|_{L^p(w)}\le c_n(p')^2\|M_{\bar A}\|_{L^{p'}} \|f\|_{L^p(v)},
\]which coincides with our previous result.

\section*{Acknowledgements}

We thank Francesco Di Plinio for sending us a preprint of \cite{CACDPO} and for some useful comments calling our attention to some results from the paper by David Beltran \cite{B}.  We would also like to thank David Beltran
for telling us his qualitative proof of Theorem \ref{Thm:omegalq}.

\end{document}